\numberwithin{equation}{section}
\newcolumntype{L}{>{\raggedright\arraybackslash}X}
\def\RR{\mathbb R}
\def\EE{\mathcal F}
\def\WW{\mathcal{W}}
\def\BE{\mathbb E}
\def\PP{\mathcal P}
\def\e{\varepsilon}
\def\argmin{{\rm arg}\!\min}
\def\dt{\Delta t}
\newcommand{\defeq}{\mathrel{\mathop:}=}
\def\be{\begin{equation}}
\def\ee{\end{equation}}
\def\bea{\begin{eqnarray}}
\def\eea{\end{eqnarray}}
\def\Q{{{X}}}
\def\DD{{\mathcal D}}
\newcommand{\rev}[1]{{\color{black}#1}}
\newcommand{\tempconst}{\sigma_0}
\newtheorem{remark}{Remark}[section]
\newtheorem{assumption}{Assumption}[section]
\newtheorem{lemma}{Lemma}[section]
\newtheorem{proposition}{Proposition}[section]
\newtheorem{corollary}{Corollary}[section]
\newtheorem{theorem}{Theorem}[section]
\title{Swarm-based optimization with jumps: a kinetic BGK framework and convergence analysis}
\author{Giacomo Borghi\thanks{Corresponding author}$\,\,$\thanks{Maxwell Institute for Mathematical Sciences and Department of Mathematics, School of Mathematical and Computer Sciences (MACS), Heriot-Watt University, Edinburgh, UK
  (\texttt{g.borghi@hw.ac.uk}, \texttt{hi3001@hw.ac.uk}, \texttt{l.pareschi@hw.ac.uk}).}
  \and Hyesung Im\footnotemark[2]
\and Lorenzo Pareschi\footnotemark[2]$\,\,$\thanks{Department of Mathematics and Computer Science, University of Ferrara, Italy.}
}
\begin{document}

\maketitle

\begin{abstract}
Metaheuristic algorithms are powerful tools for global optimization, particularly for non-convex and non-differentiable problems where exact methods are often impractical. Particle-based optimization methods, inspired by swarm intelligence principles, have shown effectiveness due to their ability to balance exploration and exploitation within the search space. In this work, we introduce a novel particle-based optimization algorithm where velocities are updated via random jumps, a strategy commonly used to enhance stochastic exploration. We formalize this approach by describing the dynamics through a kinetic modelling of BGK type, offering a unified framework that accommodates general noise distributions, including heavy-tailed ones like Cauchy. Under suitable parameter scaling, the model reduces to the Consensus-Based Optimization (CBO) dynamics. For non-degenerate Gaussian noise in bounded domains, we prove propagation of chaos and convergence towards minimizers. Numerical results on benchmark problems validate the approach and highlight its connection to CBO.
\end{abstract}

\bigskip
\noindent 
\textbf{keywords:}
swarm-based optimization, BGK model, stochastic particle systems, propagation of chaos, global optimization

\bigskip
\noindent
\textbf{MSCcodes} 65K10, 90C26, 65C35, 82C40, 35Q90

\tableofcontents
\section{Introduction}

Global optimization plays a critical role in solving complex, real-world problems across diverse fields such as machine learning, signal processing, optimal control, and finance. Among the most versatile approaches to global optimization are metaheuristic algorithms: high-level, gradient-free strategies inspired by natural phenomena or social behaviour. These methods are particularly effective in tackling large-scale and NP-hard optimisation problems where traditional exact methods become computationally infeasible. Classical examples include Simulated Annealing (SA) \cite{kirkpatrickOptimizationSimulatedAnnealing1983}, Particle Swarm Optimization (PSO) \cite{kennedyParticleSwarmOptimization1995}, Ant Colony Optimization (ACO) \cite{dorigoAntSystemOptimization1996}, and Genetic Algorithms (GA) \cite{hollandAdaptationNaturalArtificial1992}.

Metaheuristics are designed to navigate complex, often multimodal search spaces by balancing exploitation (deep search near good solutions) and diversification (broad exploration). They have proven effective in domains characterized by combinatorial complexity and uncertainty, including logistics \cite{cerulli2018computational}, finance \cite{doering2019finance}, manufacturing \cite{rossi2020integration}, and healthcare \cite{rubio2018multiobjective}. Modern extensions and hybridisations, such as combining with exact methods \cite{fischetti2018matheuristics}, simulation models  \cite{juan2018simheuristics}, and machine learning techniques \cite{calvet2017learnheuristics}, have further improved their ability to adapt to stochastic and dynamic environments.  For a broader perspective on the evolution and taxonomy of these approaches, we refer to the review  \cite{juan2023review}.

Due to their heuristic and application-driven design, the convergence analysis of metaheuristic algorithms has historically received limited attention. This is especially true for population-based methods, such as PSO, GA, and ACO, where a set of candidate solutions is iteratively mixed, perturbed, and updated through stochastic rules until a satisfactory result is found. Classical mathematical tools developed for the analysis of exact algorithms often prove inadequate in these contexts.

A more recent line of research, initiated with the development of the Consensus-Based Optimization (CBO) algorithm \cite{pinnauConsensusbasedModelGlobal2017}, seeks to establish rigorous mathematical foundations for metaheuristics using tools from statistical physics. The central idea is to model these algorithms as interacting particle systems whose evolution is governed by stochastic dynamics. The microscopic update rules for the candidate solutions, or particles, can often be described by PDEs similar to those found in physical systems, albeit typically with additional nonlinear or nonlocal interactions. This connection is conceptually intuitive, given that many metaheuristics are originally inspired by natural processes.

This PDE-based perspective has proven fruitful in providing analytical insights 
on the algorithms' mechanisms and convergence properties towards minimizers. In particular, in \cite{grassiParticleSwarmOptimization2021,huangGlobalConvergenceParticle2023}, the authors model a generalized version of PSO via a Vlasov--Fokker--Planck equation, which, under a suitable low-inertia limit \cite{ciprianiZeroinertiaLimitParticle2022} converges to the Fokker--Planck equation modelling CBO-type algorithms \cite{carrilloAnalyticalFrameworkConsensusbased2018,fornasierConsensusBasedOptimizationMethods2024}. GA algorithms, which are based on a microscopic binary dynamics simulating gene reproduction, have been modelled instead via kinetic equation of Boltzmann type \cite{albi2023kinetic,borghiKineticDescriptionConvergence2025}.
The case of the SA algorithm differs somewhat from the population-based approaches discussed above. Due to its natural connection with statistical physics and Markov Chain Monte Carlo methods, SA has long been studied through the lens of statistical mechanics, as in \cite{holley1988simulated}. More recently, a more PDE-based modelling approach based on linear Boltzmann equations was proposed in \cite{pareschi2024linear}.
Beyond offering a mathematical foundation, casting computational heuristics into PDE frameworks helps uncover structural similarities and differences across algorithms that are often obscured by their metaphor-driven and algorithm-specific terminology. We refer the reader to \cite{borghi2024kinetic} for a comprehensive review on this topic.

Following this line of research, this paper contributes to the development and the theoretical analysis of metaheuristics by: 
\begin{enumerate}[label=\roman*)]
\item 
designing and studying a novel particle-based optimization algorithm where particles explore promising regions of the search space following a second-order dynamics. In contrast to classical PSO approaches \cite{kennedyParticleSwarmOptimization1995}, particle velocities in our model are updated via random jumps, and no memory mechanisms or individual preferences are included.
Similar jump-based strategies have been explored in various swarm-based metaheuristics to address premature convergence and enhance stochastic exploration; see, for instance, \cite{krholing2004, krohling2005gaussian,krohling2009cauchy,li2007fast,mendel2011swarm,choi2023noise}. This algorithmic strategy also shares similarities with mutation operators in Genetic Algorithms (GAs) and with the jump-diffusion variant of the CBO algorithm \cite{kaliseConsensusbasedOptimizationJumpdiffusion2023}, although in those cases, the jumps affect the particle positions rather than their velocities.

\item 
We then derive a kinetic model of BGK type \cite{cercignaniBoltzmannEquationIts1988} that describes the dynamics in the mean-field (many-particle) limit, starting from a time-discretization of the particle system. A key feature of our modelling framework is its flexibility in accommodating various types of noise, unlike usual Fokker--Planck models, which typically assume Gaussian perturbations \cite{grassiParticleSwarmOptimization2021,huangGlobalConvergenceParticle2023}. For example, several popular metaheuristics employ Cauchy noise \cite{krohling2009cauchy,li2007fast,szu1987fast}, which allows for occasional long-range jumps. We also demonstrate how a CBO-type model can be obtained as a diffusive limit through a suitable rescaling of parameters.

\item
In the simplified setting of non-degenerate Gaussian noise and bounded search space, we provide quantitative estimates for the approximation error between the particle system and its kinetic model. This is achieved by proving a propagation of chaos result using a coupling argument in the $L^1$-Wasserstein distance. Furthermore, we identify sufficient conditions on the objective function under which the kinetic model converges to the global minimum, following the strategy proposed in \cite{fornasierConsensusBasedOptimizationMethods2024} for CBO methods. To the best of our knowledge, this is the first convergence analysis of this kind applied to second-order dynamics. We  conjecture that our proof technique could be extended to other second order PSO-type dynamics, improving upon the convergence analysis carried out in \cite{huangGlobalConvergenceParticle2023}.
\item 
Finally, we complement the study by testing the algorithm’s performance on benchmark problems across a range of parameter settings, with particular emphasis on the scaling regime that leads to the CBO algorithm.
\end{enumerate}

The paper is organized as follows: in Section \ref{sec:2} we present the optimization algorithm, formally derive the swarm-based model, and underline relations with CBO models. In Sections \ref{sec:chaos} and \ref{sec:convergence} we show quantitative propagation of chaos of the particle system and convergence towards global minimum for a specific model choice.
Numerical experiments on benchmark problems are presented in Section \ref{sec:numerics}, while final remarks and outlook are collected in Section \ref{sec:conclusion}.

\section{Swarm-based optimization with jumps and BGK description}
\label{sec:2}

\subsection{Second order models with jump velocity update}

 We consider the following optimization problems of the type
\begin{equation}\label{typrob}
x^\ast \in \argmin\limits_{x\in \RR^d}\EE(x)\,,
\end{equation}
where $\EE:\mathbb R^{d} \to \mathbb R$ is a given objective function, which we wish to minimize over the search space $\RR^d, d\in \mathbb{N}$. Particle Swarm Optimization is a particle-based algorithm inspired by the collective motion of a flock of birds \cite{kennedyParticleSwarmOptimization1995}. At a high-level description, the standard PSO algorithm is characterized by the following elements:

\begin{itemize}[label=$\bullet$]
\item Particles follow a second-order dynamics, and so each particle is characterized by its position and its velocity;
\item The particles' velocities are updated according to a global and a local alignment. The global alignment directs the velocity towards the best solution found by the particle swarm. The local one directs each particle towards its own best solution found;
\item The velocity updates are subject to noise to improve exploration of the search space.
\end{itemize}
From the PSO algorithm proposed in \cite{kennedyParticleSwarmOptimization1995}, throughout the years many different variants have been then proposed to solve different type of optimization problems beyond \eqref{typrob}, see e.g. \cite{pso2022survey} for a review. 

We take as a starting point for our swarm-based optimization algorithm, the PSO version proposed in \cite{grassiParticleSwarmOptimization2021} where the position of the best particle is regularized, and the stochastic mechanism is decoupled from the deterministic one. This version is more amenable to PDE-modelling and theoretical analysis. We also consider the dynamics without local alignment, and, so, without memory mechanisms. 
At each algorithmic step $k = 0,1,2,\dots$ we consider $N$ particles $X^i_k, i = 1,\dots,N$ with velocities $V_k^i, i = 1,\dots, N$. Let $\Delta t>0$ be a step-size and $\lambda, \sigma>0$ be two parameters that govern the deterministic and stochastic components, respectively. Starting from a random initialization $(X^i_0,V_0^i)\in \RR^d\times \RR^d$ the particle updates read for $i = 1,\dots,N$
\begin{equation}\label{eq:pso}
\begin{split}
    X^i_{k+1}&= X^i_k+\dt V^i_{k+1}, \\
    V^i_{k+1}&= V^i_{k} + \lambda\Delta t (X^\alpha[\rho_k^N] - X^i_k) + \sigma \sqrt{\Delta t} (X^\alpha[\rho_k^N] - X^i_k) \odot \xi^i_k 
\end{split}
\end{equation}
where $\xi^i_k\in \RR^d$ are randomly sampled vectors from a distribution component-wise symmetric around $0$, and $\odot$ is the component-wise product. Above, the point $X^\alpha[\rho_k^N]$ is defined as a weighted average of the particles' positions
\begin{equation} \label{eq:xalpha}
    X^\alpha[\rho_k^N] = \frac{\sum_{i=1}^NX^i_k\omega_\alpha(X^i_k)}{\sum_{i=1}^N \omega_\alpha(X^i_k)}, \quad \omega_\alpha(x)=\text{exp}(-\alpha \EE(x)), \quad \alpha>0\,,
\end{equation}
which depends on the particles' current positions through the empirical probability distribution  $\rho_k^N = 1/N \sum_i \delta_{X^i_k}$ associated with the system. The point $X^\alpha[\rho_k^N]$ can be considered to be a regularization of the best solutions among the particles' positions, as it holds
\[
X^\alpha[\rho_k^N] \longrightarrow \underset{x  =  X^i_k, i = 1,\dots,N}{\textup{argmin}}\,\EE(x) \qquad \textup{as}\quad \alpha \to \infty
\]
provided there is only one "best" particle among the ensemble. This regularization was proposed in the context of CBO \cite{pinnauConsensusbasedModelGlobal2017} , and therefore we call it the "consensus point". Indeed, the particle dynamics \eqref{eq:pso} can also be seen as a second-order CBO update. After either a fixed number of iterations or the application of an early stopping criterion, the final consensus point is taken as the desired minimizer.

In \eqref{eq:pso}, the velocities are incrementally updated by adding terms proportional to the step-size $\Delta t$. In the literature, 
different algorithmic strategies have been proposed \cite{krholing2004,krohling2009cauchy,li2007fast} where velocities are instead completely changed from the previous iterations, and sampled anew.  This is usually done in combination with the usual velocity update of type \eqref{eq:pso} to occasionally increase the exploration of the particles and prevent premature convergence. We propose the following swarm-based algorithm \eqref{eq:pso} , which includes velocity jumps, while being amenable to mathematical modelling.

Let $\nu>0$ be a parameter determining the frequency of the velocity jumps, the modified dynamics reads for $i = 1, \dots,N$ 
\begin{equation}\label{eq:psojump}
\begin{split}
    X^i_{k+1}&= X^i_k+\dt V^i_{k+1}, \\
    V^i_{k+1}&= 
    \begin{cases}
    V^i_k & \textup{with prob.}\;\; e^{-\nu \Delta t}\,, \\
    \lambda (X^\alpha[\rho_k^N] - X^i_k) + \sigma (X^\alpha[\rho_k^N] - X^i_k) \odot \xi_k^i & \textup{with prob.}\;\; 1- e^{-\nu \Delta t}\,.
    \end{cases}
\end{split}
\end{equation}
 While uniform distributions are the standard choice for the noise $\xi^i_k$ in PSO algorithms \cite{pso2022survey}, variants with Gaussian or Chauchy random vectors are also popular, see e.g. \cite{krholing2004,krohling2009cauchy,li2007fast}.
In \eqref{eq:psojump}, $\Delta t $ enters in the velocity dynamics via the update frequency, rather as a proper step-size as in \eqref{eq:pso}. Specifically, large values of $\Delta t$ lead to more frequent updates, and the same holds for the parameter $\nu$. We remark that the system is not overparametrized, as $\Delta t$ also controls the step-size of increments for the particles positions.

\begin{remark}
In both \eqref{eq:pso} and \eqref{eq:psojump}, the strength of the noise depends on the difference $X^\alpha[\rho^N_k] - X_k^i$ , so it disappears as consensus among the particles emerges, that is $X^i_k \to X^\alpha[\rho_k^N]$. This is also a feature of CBO-type algorithms. In our theoretical analysis, we will consider a more general model accounting for non-degenerate diffusion (see update \eqref{eq:particle-sys-alg}). The benefit of non-degenerate diffusion in the theoretical analysis of CBO was already studied in \cite{huangFaithfulGlobalConvergence2025} where it allows for uniqueness of steady state solutions of the corresponding mean-field model. In high-dimensional applications, it has been used to prevent early convergence towards sub-optimal solutions, see, for instance,  \cite[Remark 2.4]{carrilloConsensusbasedGlobalOptimization2020} and the numerical experiments therein.
    \end{remark}

\subsection{A kinetic BGK description of the algorithm}

In this section, we propose a mathematical model for the evolution of a swarm of particles with jumps, as defined in \eqref{eq:psojump}, by taking the many-particle limit ($N \to \infty$) and infinitesimal time steps ($\Delta t \to 0$). Unlike the modeling approaches for standard PSO \cite{grassiParticleSwarmOptimization2021} and CBO \cite{carrilloAnalyticalFrameworkConsensusbased2018,fornasierConsensusBasedOptimizationMethods2024}, we first pass to the limit $N \to \infty$ to obtain a kinetic finite-difference equation, and only then consider its continuous-time limit. In this section, we present formal derivations; more rigorous and quantitative results on the kinetic approximation will be given in Section \ref{sec:chaos}.

To model the particle dynamics, we adopt the classical propagation of chaos assumption on the marginals \cite{sznitman1991chaos,chaintronPropagationChaosReview2022}. Let the particle states $\{X_k^i, V_k^i\}_{i=1}^N$ be random variables in $\mathbb{R}^d \times \mathbb{R}^d$, with joint law $F_k \in \mathcal{P}((\mathbb{R}^d \times \mathbb{R}^d)^N)$. If the particles are initially independent and identically distributed with common law $f_0 \in \mathcal{P}(\mathbb{R}^d \times \mathbb{R}^d)$, we assume that this independence propagates over time despite the interactions. That is, for $k \geq 1$ and $N \gg 1$,
\[
F_k \approx f_k^{\otimes N} \qquad \text{for some} \quad f_k \in \mathcal{P}(\mathbb{R}^d \times \mathbb{R}^d)\, .
\]
We extend the definition of the consensus point to any $\rho \in \mathcal{P}(\mathbb{R}^d)$ as
\[
X^\alpha[\rho] := \frac{\int x \, \omega_\alpha(x)\, \rho(dx)}{\int \omega_\alpha(x)\, \rho(dx)},
\]
with $\omega_\alpha$ defined as in \eqref{eq:xalpha}. Under the propagation of chaos assumption, we have $X^\alpha[\rho_k^N] \approx X^\alpha[\rho_k]$ for $N \gg 1$, where $\rho_k$ is the first marginal of $f_k$.

Replacing $X^\alpha[\rho_k^N]$ with $X^\alpha[\rho_k]$ in \eqref{eq:psojump}, we obtain a system of independent particles evolving according to the following nonlinear update:
\begin{equation}\label{eq:monopso}
\begin{split}
    \overline{X}_{k+1} &= \overline{X}_k + \Delta t\, \overline{V}_{k+1}, \\
    \overline{V}_{k+1} &= 
    \begin{cases}
    \overline{V}_k & \text{with prob.}\;\; e^{-\nu \Delta t}, \\
    \lambda (X^\alpha[\rho_k] - \overline{X}_k) + \sigma (X^\alpha[\rho_k] - \overline{X}_k) \odot \xi_k & \text{with prob.}\;\; 1 - e^{-\nu \Delta t},
    \end{cases} \\
    \rho_k &= \text{Law}(\overline{X}_k),
\end{split}
\end{equation}
with $\text{Law}(\overline{X}_0, \overline{V}_0) = f_0$. We show next that the above mono-particle process is related to a numerical discretization of a kinetic PDE of Bhatnagar–Gross–Krook (BGK) type.

For notational simplicity, we assume that $f_k$ admits a density, also denoted $f_k$.  Let $f = f(x,v)$ be any probability density, with $\rho(x) = \int f(x,v) dv$ its first marginal. Consider $x\in \RR^d$ and any $d$-dimensional random variable $\xi_k$, we define 
$\mathcal{M}^x[\rho] := \text{Law}\left(\lambda (X^\alpha[\rho] - x) + \sigma (X^\alpha[\rho_k] - x) \odot \xi_k\right)$
and $\mathcal{M}[f](x,v) := \rho(x) \mathcal{M}^x[\rho](v)$. Let us define the operators $Q^{\Delta t}, L^{\Delta t}$ acting on $f$ as
\be
\begin{split}
    Q^{\Delta t} f(x,v) &:= e^{-\nu\Delta t}f(x,v) +(1-e^{-\nu\Delta t}) \mathcal{M}[f](x,v) \\
    L^{\Delta t} f(x,v) &:= f(x - \Delta t v, v) \,.
\end{split}
\label{eq:discretePDE}
\ee

\begin{lemma}\label{lem:consistency}
If $\textup{Law}(\overline{X}_k, \overline{V}_k) = f_k$, then $\textup{Law} (\overline{X}_{k+1},\overline{V}_{k+1}) =  L^{\Delta t}Q^{\Delta t}f_k\,.$
\end{lemma}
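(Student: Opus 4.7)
The plan is to decompose the one-step update into two successive operations and compute the law of $(\overline{X}_{k+1},\overline{V}_{k+1})$ as the composition of their effects on $f_k$. First I would compute the joint law of the intermediate pair $(\overline{X}_k,\overline{V}_{k+1})$, i.e.\ after the velocity has been refreshed but before the position has been transported; this step should produce $Q^{\Delta t}f_k$. Then I would apply the deterministic free-transport map $(x,v)\mapsto (x+\Delta t\,v,v)$ to push that law forward, which should give $L^{\Delta t}Q^{\Delta t}f_k$.

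For the first step, I condition on the jump indicator $J_k\in\{0,1\}$, where $\mathbb{P}(J_k=0)=e^{-\nu\Delta t}$ and $J_k$ is taken independent of $(\overline{X}_k,\overline{V}_k,\xi_k)$. On $\{J_k=0\}$ the pair $(\overline{X}_k,\overline{V}_{k+1})$ coincides with $(\overline{X}_k,\overline{V}_k)$ and hence has law $f_k$. On $\{J_k=1\}$, given $\overline{X}_k=x$, the new velocity has conditional law $\mathcal{M}^x[\rho_k]$ by the very definition of $\mathcal{M}^x[\rho_k]$ and the independence of $\xi_k$; together with the marginal $\rho_k$ of $\overline{X}_k$ this produces the joint density $\rho_k(x)\,\mathcal{M}^x[\rho_k](v)=\mathcal{M}[f_k](x,v)$. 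The law of total probability then yields
\[
\textup{Law}(\overline{X}_k,\overline{V}_{k+1}) \,=\, e^{-\nu\Delta t} f_k \,+\, (1-e^{-\nu\Delta t})\,\mathcal{M}[f_k] \,=\, Q^{\Delta t}f_k,
\]
by the definition of $Q^{\Delta t}$ in \eqref{eq:discretePDE}.

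For the second step, I test against an arbitrary bounded measurable $\phi$ and apply the change of variables $y=x+\Delta t\,v$ at fixed $v$:
\[
\mathbb{E}\big[\phi(\overline{X}_k+\Delta t\,\overline{V}_{k+1},\,\overline{V}_{k+1})\big] \,=\, \int \phi(x+\Delta t\,v,v)\,(Q^{\Delta t}f_k)(x,v)\,dx\,dv \,=\, \int \phi(y,v)\,(Q^{\Delta t}f_k)(y-\Delta t\,v,v)\,dy\,dv,
\]
which identifies the law as $L^{\Delta t}Q^{\Delta t}f_k$ by the definition of $L^{\Delta t}$. Composing the two steps gives the claim.

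I do not expect any genuine obstacle here; the only piece of bookkeeping worth stating explicitly is that $X^\alpha[\rho_k]$ is a deterministic functional of the law $\rho_k$ (not of the random realization $\overline{X}_k$), so that conditionally on $\overline{X}_k=x$ the distribution $\mathcal{M}^x[\rho_k]$ is well-defined and the independence of $\xi_k$ and $J_k$ from $(\overline{X}_k,\overline{V}_k)$ can be applied cleanly. Everything else is a direct application of the law of total probability and a linear change of variables.
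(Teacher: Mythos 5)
Your proposal is correct and follows essentially the same route as the paper: split the update into the velocity-refresh step, which by conditioning on the jump indicator yields $\textup{Law}(\overline{X}_k,\overline{V}_{k+1})=Q^{\Delta t}f_k$, and then push forward through the free-transport map to obtain $L^{\Delta t}Q^{\Delta t}f_k$. The paper phrases both steps via duality with test functions $\phi\in\mathcal{C}_b$ rather than the law of total probability plus an explicit change of variables, but the content is identical, and your remark that $X^\alpha[\rho_k]$ is a deterministic functional of the law is the right point to make explicit.
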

\begin{proof}

Let $\overline{W}_k = \lambda (X^\alpha[\rho_k] - \overline{X}_k) + \sigma (X^\alpha[\rho_k] - \overline{X}_k) \odot \xi_k$, for any test function $\phi\in\mathcal{C}_b(\RR^d\times \RR^d)$ it holds
\begin{equation*}
\begin{split}
\mathbb{E} \left[ \phi (\overline{X}_k, \overline{V}_{k+1}) \right]& = e^{-\nu\Delta t} \mathbb{E} [\phi(\overline{X}_k, \overline{V}_k)] + (1-e^{-\nu\Delta t})\mathbb{E} [\phi(\overline{X}_k, \overline{W}_k)]  \\ 
&= e^{- \nu\Delta t}\int \phi(x,v) f_k(x,v)dxdv + (1-e^{-\nu\Delta t})\int \phi(x,v) \rho_k(x) \mathcal{M}^x[\rho_k](v)dxdv \\
&= \int \phi(x,v) Q^{\Delta t}f_k(x,v)dxdv\,.
\end{split}
\end{equation*}
Therefore, for $f_{k+\frac12}:=\textup{Law}(\overline{X}_k, \overline{V}_{k+1}) $ we have $f_{k+\frac12} = Q^{\Delta t}f_k $.
Next, we note that
\begin{equation*}
\begin{split}
    \mathbb{E} \left[\phi(\overline{X}_{k+1}, \overline{V}_{k+1})\right] &= \mathbb{E} \left[\phi(\overline{X}_k+\Delta t \overline{V}_{k+1}, \overline{V}_{k+1})\right] \\
    &= \int \phi(x+v\Delta t,v) f_{k+{1\over2}}(x, v)dx dv \\
    &=\int \phi(x,v) L^{\Delta t} f_{k+1}(x,v) dx dv\,,
\end{split}
\end{equation*}
which shows that $f_{k+1} = L^{\Delta t} f_{k+\frac12} = L^{\Delta t } Q^{\Delta t }f_k $.
\end{proof}


Updates \eqref{eq:discretePDE} correspond to a splitting scheme widely used in the numerical approximation of kinetic equations, see \cite{desvillettesSPLITTINGALGORITHMBOLTZMANN1996}. In the continuous-time limit $\Delta t \to 0$, they approximate the BGK-type equation:
\begin{equation}\label{BGK1}
\partial_t f + v \cdot \nabla_x f = \nu (\mathcal{M}[f] - f),
\end{equation}
where $\mathcal{M}(x,v,t) = \rho(x,t) \mathcal{M}^x[\rho](v)$.
Equation \eqref{BGK1} is a classical BGK model, originally introduced in \cite{bhatnagarModelCollisionProcesses1954}, used as a simplified alternative to the Boltzmann equation in rarefied gas dynamics. In such systems, particles move with given velocities and interact through collisions that drive the distribution toward a local Maxwellian equilibrium $\mathcal{M}[f]$. The BGK formulation replaces the complex collision term in the Boltzmann equation with a relaxation term that is more amenable to analysis and computation. In \eqref{BGK1}, the relaxation towards $\mathcal{M}[f]$ corresponds to the operator $Q^{\Delta t}$, while the free transport corresponds to the action of $L^{\Delta t}$.

In \cite{buttaStochasticParticleSystem2022,buttaParticleSystemsBGK2023}, the authors showed that stochastic particle systems approximate the BGK equation on a torus in dimensions $d=2,3$, providing further support for interpreting the swarm-based dynamics with jumps \eqref{eq:psojump} as a Monte Carlo approximation of the BGK model \eqref{BGK1}.

Unlike classical BGK models, where the Maxwellian equilibrium is typically Gaussian in velocity, the equilibrium in our BGK-swarm model depends on the noise distribution $\xi_k$ in \eqref{eq:monopso}. Assuming $\xi_k$ is standard Gaussian, $\xi_k \sim \mathcal{N}(0,I_d)$, we obtain:
\begin{equation}\label{eq:maxwellian}
    \mathcal{M}^x[\rho] = \mathcal{N}\left(\lambda (X^\alpha[\rho] - x),\; \sigma^2 \, \text{diag}(X^\alpha[\rho] - x)^2\right).
\end{equation}
This BGK system conserves mass but not momentum or energy, which are instead shaped by the Maxwellian structure and the driving consensus term $X^\alpha[\rho]$ that directs the system toward the estimated global minimum.

\begin{remark}In \cite{grassiParticleSwarmOptimization2021,pinnauConsensusbasedModelGlobal2017,kaliseConsensusbasedOptimizationJumpdiffusion2023}, the authors provided a kinetic description of the algorithms by first describing the particle dynamics as a system of SDEs as $\Delta t \to 0$, and then, by taking the mean-field limit $N\to 0$. This could also be done for the swarm dynamics with jumps \eqref{eq:psojump} by defining suitable non-linear Markov generators and Poisson processes \cite{applebaumLevyProcessesStochastic2013}. While we conjecture the limits $ \rev{\dt \to 0}, N\to \infty$ to be interchangeable, we leave this alternative modelling procedure for future research.

\end{remark}

\subsection{Diffusion limit and relation to CBO}\label{sec:2.scaling}

To explore the connection between the BGK optimization model \eqref{BGK1} and CBO on particle swarms, let us
consider Gaussian local Maxwellians \eqref{eq:maxwellian}, and introduce the following diffusive scaling
\[
t \to t/\e^2,\quad x \to x/\e,\quad \lambda \to \e\lambda, 
\] 
where $\e > 0$ is a scaling term.
The particle density $f(x,v,t)$, $x,v \in \RR^d$, $d \geq 1$ evolves accordingly to the scaled BGK system 
\begin{equation}\label{BGK}
\begin{split}
\partial_t f + \frac1{\e}v \cdot \nabla_x f = \frac{\nu}{\e^2}({\mathcal M}_\e-f)
\end{split}
\end{equation}
where $\e > 0$ is a scaling factor, and the density of the scaled local Maxwellian is explicitly given, for $x = (x_1, \dots, x_d)^\top, v = (v_1, \dots, v_d)^\top \in \RR^d$ by
\be
\begin{split}
{\mathcal M}_\e(x,v,t)&= \rho(x,t) \prod_{\ell=1}^d M_\e(x_\ell,v_\ell,t), \\
M_\e(x_\ell,v_\ell,t) &= \frac{1}{\pi^{1/2}\sigma |x_\ell-\Q^\alpha_\ell[\rho]|} 
\exp\left\{-\frac{(v_\ell-\e\lambda(\Q^{\alpha}_\ell[\rho]-x_\ell))^2}{\sigma^2(x_\ell-\Q^\alpha_\ell[\rho])^2}\right\},
\end{split}
\ee
where now, each one-dimensional Maxwellian $M_\e(x_\ell,v_\ell,t)$ has vanishing momentum as $\e \to 0$.

Let us now integrate equation \eqref{BGK} with respect to $v$, and multiply the same equation by $v$ and integrate again, we get
\be
\begin{split}
\frac{\partial \rho}{\partial t} + \nabla_x \cdot J_\e &= 0\\
\frac{\partial J_\e}{\partial t} + \frac{1}{\e^2}\int_{\RR^d} v\left(v\cdot \nabla_x f\right)\,dv &= \frac{\nu}{\e^2} (\lambda (\Q^{\alpha}[\rho]-x)\rho -J_\e) 
\end{split}
\label{macro}
\ee
where we used the scaled moment
\[
J_\e = \frac1{\e}\int_{\RR^d} f(x,v,t) v\,dv.
\]
Clearly system \eqref{macro} is not closed since it involves the knowledge of higher order moments of the kinetic equation \eqref{BGK}.
As $\e \to 0$ from the right hand side in \eqref{BGK} we get
\be
f(x,v,t) = {\mathcal M}_0(x,v,t),
\ee
the Maxwellian with zero momentum, and then the $\ell$-th component of the second term in the left hand side of
the second equation in \eqref{macro} becomes
\[
\begin{split}
\int_{\RR^d} v_\ell\left(v\cdot \nabla_x \left({\mathcal M}_0(x,v,t)\right) \right)\,dv &=  \sum_{j=1}^d \frac{\partial}{\partial x_j} \left(\int_{\RR^d} v_\ell v_j\, {\mathcal M}_0(x,v,t)\,dv\right)\\
& =  \frac{\partial}{\partial x_\ell} \left(\rho(x,t) \int_{\RR^d} v^2_\ell {M}(x_\ell,v_\ell,t)\,dv_\ell\right)\\
& = \frac{\sigma^2}{2}\frac{\partial}{\partial x_\ell} \left(\rho(x,t) (x_\ell-\Q^\alpha_\ell[\rho])^2\right).
\end{split}
\]
Therefore, in the limit $\e\to 0$, from the second equation in \eqref{macro} we have 
\[
(J_0)_\ell = \lambda (\Q_\ell^{\alpha}[\rho]-x_\ell) \rho -\frac{\sigma^2}{2} \frac{\partial}{\partial x_\ell} \left(\rho(x,t) (x_\ell-\Q^\alpha_\ell[\rho])^2\right),
\]
which substituted in the first equation yields the mean-field CBO system \cite{carrilloConsensusbasedGlobalOptimization2020}
\begin{equation}
\frac{\partial \rho}{\partial t} + \nabla_x \cdot \lambda (\Q^{\alpha}[\rho]-x) \rho = \frac{\sigma^2}{2}\sum_{\ell=1}^d \frac{\partial^2}{\partial x^2_\ell} \left(\rho(x,t) (x_\ell-\Q^\alpha_\ell[\rho])^2\right).
\label{eq:CBOp}
\end{equation}
Hence, in the diffusion limit $\e \ll 1$ we expect the macroscopic density in the kinetic system \eqref{BGK} to be well approximated by the solution of the CBO equation \eqref{eq:CBOp}. 

We remark that the swarm-based optimization with jump update \eqref{eq:psojump} applied to the scaled system \eqref{BGK} leads to the update rules
\begin{equation}\label{alg_BGK2}
\begin{split}
    X^i_{k+1}&= X^i_k+\dt V^{\e, i}_{k+1}, \\
    V^{\e,i}_{k+1}&= 
    \begin{cases}
    V^{\e,i}_k & \textup{with prob.}\;\; e^{-\nu \Delta t/ \e^2}\,, \\
    \lambda (X^\alpha[\rho_k^N] - X^i_k) + \frac\sigma\e (X^\alpha[\rho_k^N] - X^i_k) \odot \xi_k^i & \textup{with prob.}\;\; 1- e^{\rev{-\nu\dt / \e ^2}}\,.
    \end{cases}
\end{split}
\end{equation}

which is unsuitable to approximate the system behaviour for small values of $\varepsilon$. 

\begin{remark} Note that the above idea applies to any Fokker--Planck system of the form
\begin{equation}
\frac{\partial \rho}{\partial t} + \nabla_x \cdot \left(A[\rho]\rho\right) = \frac{\sigma^2}{2}\sum_{\ell=1}^d \frac{\partial^2}{\partial x^2_\ell} \left(\rho(x,t) D[\rho]_\ell^2\right),
\label{eq:FP}
\end{equation}
using the one-dimensional Maxwellian components 
\[
M_{\e}(x_\ell,v_\ell,t) = \frac{1}{\pi^{1/2}\sigma |D[\rho]_\ell|} 
\exp\left\{-\frac{(v_\ell-\e A[\rho]_\ell)^2}{\sigma^2 D[\rho]^2_\ell}\right\},
\]
and a BGK kinetic model of the form \eqref{BGK}.
\end{remark}

\begin{remark}[Fluid limit]
In the classical fluid-limit $\nu\to \infty$ we have $f=\mathcal{M}$ and integrating \eqref{BGK1} with respect to $v$ we get 
\be
\begin{split}
\frac{\partial \rho}{\partial t} + \nabla_x \cdot  \lambda \left(X^{\alpha}[\rho]-x\right)\rho &= 0.
\end{split}
\label{euler}
\ee
From an algorithmic viewpoint, when $\nu\to\infty$ we have $e^{-\nu\Delta t} \to 0$ leading to the reduced dynamic 
\be
X^i_{k+1} = X^i_k + \Delta t \left(\lambda (X^{\alpha}[\rho_k]-X^i_k) + \sigma (X^\alpha[\rho_k^N] - X^i_k) \odot \xi_k^i \right),
\label{aCBO}
\ee
that corresponds to a consistent stochastic particle approximation to \eqref{euler} characterized by a Maxwellian travelling in space with a velocity and a variance which depends on the particle position.  Note that in \eqref{aCBO} the noise vanishes as $\Delta t \to 0$ and we get the ODE
\[
\frac{d}{dt} X^i = \lambda (X_{\alpha}-X^i)\,.
\]
\end{remark}

\section{Propagation of chaos of the particle system}

\label{sec:chaos}

In this section we quantitatively study the relation between the swarm-based optimization with jumps, and the related non-linear mono-particle process obtained formally in Section \ref{sec:2} under the propagation of chaos assumption. Specifically, we aim to demonstrate that, as the number of particles $N \to \infty$ the law of the $N$-particle system converges to that of the corresponding mean-field dynamics and each particle process becomes asymptotically independent. We will consider a modified version of the dynamics where particles explore a bounded search domain and are subject to non-degenerate diffusion.

This convergence property is commonly referred to as propagation of chaos. With the results of propagation of chaos, we can justify using mean-field equation as a particle system in further analysis. Our proof is based on a coupling method, which is one of several established techniques used to derive such results. For a comprehensive overview of the various methods and developments in the theory of propagation of chaos, we refer the reader to \cite{chaintronPropagationChaosReview2022}.

\subsection{Notations and preliminaries}
\label{sec:notation}

For $x=(x_1, \dots, x_d)^\top \in \RR^d$, we denote with $|x|$ the standard Euclidean norm, while $\lVert x \rVert_1 := \sum_{\ell=1}^d|x_\ell|$, $\lVert x \rVert _\infty:=\max_{\ell=1,\dots,d} |x_\ell|$. With $B_r(x)$ and $B_r^\infty(x)$ we denote the closed balls centred in $x$ with radius $r>0$ for the Euclidean and $\ell^\infty$ norm, respectively.

If a random variable $X$ follows a Bernoulli distribution with parameter $p\in[0,1]$, we write $X \sim \text{Bern}(p)$. If $X$ follows a Gaussian distribution with mean $m \in \RR^d$ and covariance matrix $\Sigma \in \RR^{d\times d}$, we write $X \sim \mathcal{N}(m, \Sigma)$.
Let $\mathcal{P}(\mathcal{X})$ be the set of probability measures on a Polish space $\mathcal{X}$, and $\mathcal{P}_p(\mathcal{X})$ the set of probability measures with finite moments of order $p$, that is, $M_p(\mu) := \left(\int |x|^p \mu(dx) \right) ^{\frac{1}{p}} < \infty$. For a measure $\mu \in\mathcal{P}(\mathcal{X})$ and a Borel map $T:\mathcal{X}\to \mathcal{Y}$, we denote by $T_{\#}\mu \in \mathcal{P}(\mathcal{Y})$ the push-forward of $\mu$ through $T$, defined by 
 \begin{equation*}
     T_{\#}\mu(B) \defeq \mu(T^{-1}(B)) \quad \forall B \in \mathcal{B}(\mathcal{Y}),
 \end{equation*}
 where $\mathcal{B}(\cdot)$ denotes Borel sets on the space. 

For two probability measures $\mu, \nu \in \mathcal{P}_p(\RR^d)$,  $\WW_p(\mu, \nu)$ is the $p$-Wasserstein distance defined by 
\begin{equation*}
    \WW_p(\mu, \nu) \defeq \min_{\gamma \in \Gamma} \left(  \int |x-y|^p \gamma(dx, dy )\right)^{\frac{1}{p}},
\end{equation*}
with 
 $\Gamma = \{ \gamma \in \mathcal{P}(\RR^d \times \RR^d) \,| \,\pi_{0\#}\gamma =\mu, \pi_{1\#}\gamma = \nu\}$ and $\mu, \nu \in \mathcal{P}_p(\RR^d)$. Here, $\pi_0(x, y) = x$ and $\pi_1(x, y) = y$ are the canonical projections onto the first and second coordinates. 
 For more detailed definitions and properties, we refer the reader to \cite{santambrogioOptimalTransportApplied2015,villaniOptimalTransport2009}.

\subsection{Assumptions and main results}

As mentioned in the introduction, we consider a modified version of the particle update \eqref{eq:psojump} accounting for projection in a closed convex search domain $\DD\subseteq \RR^d$, and non-degenerate Gaussian diffusion. The projection map $\Pi_{\DD}:\RR^d \to \DD$ is given by
\[
\Pi_{\DD}(y):= \underset{x\in\DD}{\textup{argmin}}\, | x - y|\,,
\]
and it is well defined thanks to the convexity of $\DD$. As before, consider $N$ particles $(X^i_k, V^i_k), i = 1,\dots,N$, where $k = 0,1,2,\dots$ denotes the algorithmic step, and the corresponding empirical measures $f_k^N = (1/N) = \sum_i \delta_{(X^i_k,V^i_K)}$ and $\rho^N_{k} = (1/N)\sum_i \delta_{X_k^i}$. For a given step size $\Delta t>0$ and a probability measure $p^\xi\in \mathcal{P}(\RR^d)$ for the noise, at every algorithmic step define i.i.d. random variables
\[
T^i_k \sim \textup{Bern}(e^{-\nu\Delta t }), \quad \rev{\xi^i_k} \sim \mathcal{N}(0,I_d)\qquad i = 1,\dots,N
\]
and for $\lambda, \sigma> 0, \sigma_0 \geq 0$ we define $W_k^i$ component-wise for $\ell = 1,\dots,d$ as
\begin{equation} \label{eq:Wi}
(W_k^i)_\ell = \lambda(X^\alpha[\rho_k^N] - X_k^i)_\ell + \sigma\left(\sigma_0 + |(X^\alpha[\rho_k^N] - X_k^i)_\ell|\right) \xi_{k,\ell}^i
\qquad i = 1,\dots,N
\,. 
\end{equation}
The particle update is then given by
\be
\begin{cases}
    X_{k+1}^i &= \Pi_{\DD} \left(X_k^i + \Delta t V_{k+1}^i\right) \\
    V_{k+1}^i &= T_k^i V_k ^i + (1-T_k^i) W_k^i 
\end{cases} \qquad i = 1,\dots,N\,.
\label{eq:particle-sys-alg}
\ee
We note that for $\DD = \RR^d$ and $\sigma_0  = 0$, the update is equivalent to \eqref{eq:psojump}. For $\sigma_0>0$, the contribution of the noise $\xi_k^i$ never vanishes in the dynamics. As similar CBO dynamics with non-degenerate diffusion was studied in \cite{huangFaithfulGlobalConvergence2025}.

Following the formal arguments of Section \ref{sec:2}, one obtains that the corresponding nonlinear particle system of is given by
\be
\begin{cases}
    \overline{X}_{k+1} &= \Pi_{\DD} \left(\overline{X}_k + \Delta t \overline{V}_{k+1} \right) \\
    \overline{V}_{k+1} &= \overline{T}_k \overline{V}_k + (1-\overline{T}_k)\overline{W}_k
\end{cases}
\label{eq:nonlinear-update}
\ee
where, similarly as before, $ \overline{T}_k \sim \text{Bern} \ (e^{-\nu\Delta t})$, $\overline{\xi}_k \sim \mathcal{N}(0,I_d)$, and, as before, for $\ell = 1,\dots,D$ we have
\begin{equation} \label{eq:Wbar}
    (\overline{W}_k)_\ell = \lambda(X^\alpha[\rho_k] - X_k)_\ell + \sigma\left(\sigma_0 + |(X^\alpha[\rho_k] - \overline{X}_k)_\ell|\right) \overline{\xi}_{k,\ell}\,.
\end{equation}

The system is of McKean type, as the consensus point $X^\alpha[\rho_k]$ depends on $\rho_k = \textup{Law}(\overline{X}_k)$.
Throughout the analysis, we assume the following conditions for the objective function $\mathcal{F}$. 
\begin{assumption}\label{assume:propagation}The objective function $\mathcal{F}: \RR^d \to \RR$ satisfies the following:
\begin{itemize}
    \item [(A1)] $\mathcal{F}$ is bounded below with $\underline{\mathcal{F}}\defeq \inf \mathcal{F} > -\infty$. 
    \item [(A2)] (growth conditions) there exists $L_{\mathcal{F}}, c_u, c_l, R_l>0$ such that \\  
    \be
    \begin{cases}
            |\mathcal{F}(x)-\mathcal{F}(y)| \leq L_\mathcal{F}(1+|x|+|y|)|x-y| &\forall x,y \in \RR^d,  \\
            \mathcal{F}(x)-\mathcal{F}(x^\ast) \leq c_u(1+|x|^2) &\forall x \in \RR^d, \\
            \mathcal{F}(x)-\mathcal{F}(x^\ast) \geq c_l|x|^2  &\forall x \text{ s.t. } |x| > R_l
        \end{cases}
    \ee
\end{itemize}
\end{assumption}

\begin{theorem} \label{thm:propagation-of-chaos}
Let $\mathcal{D}\subset \RR^d$ be a compact and convex search space, and let $\{ (X^i_k, V^i_k)\}_{i=1}^N$ be a particle system defined by \eqref{eq:particle-sys-alg} with i.i.d. initial data $(X^i_0, V^i_0)\sim f_0$ for some $f_0\in \mathcal{P}_1(\DD\times \RR^d)$.

Consider $N$ independent copies $\{(\overline{X}^i_k,\overline{V}^i_k)\}_{i=1}^N$ be $N$ of  the mono-particle process \eqref{eq:nonlinear-update} 
with $\overline{T}^i_k = T_k^i \sim \textup{Bern}(e^{-\nu \dt})$, $\overline{\xi}_k^i = \xi_k^i\sim \mathcal{N}(0,I_d)$ for all $i =1,\dots,N$, and same initial data.  Let the parameters be such that $\Delta t \in (0,1]$, $\lambda, \sigma>0$, $\sigma_0\geq 0$ and let $T > 0$ be a fixed time horizon.

If the objective function $\EE$ satisfies Assumption \ref{assume:propagation}, then there exists a constant $C= C(\alpha,\DD,\lambda, \sigma, \mathcal{F},d)$ independent of $N$, $T$ such that
\begin{equation}\label{eq:propagation-of-chaos}
  \sup_{k\Delta t \in [0,T]}\BE \left[ \frac{1}{N} \sum_{i=1}^N |X^i_k-\overline{X}^i_k| + |V^i_k-\overline{V}^i_k| \right] \leq Ce^{CT}\frac{1}{\sqrt{N}}. 
\end{equation}
\end{theorem}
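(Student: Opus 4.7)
The plan is to use a synchronous coupling between the interacting particle system $\{(X_k^i, V_k^i)\}$ and the $N$ independent mean-field copies $\{(\overline X_k^i, \overline V_k^i)\}$, derive one-step $L^1$ error bounds, and close a scalar recursion with a discrete Grönwall inequality. Since the Bernoulli variables $T_k^i$, the Gaussian noises $\xi_k^i$, and the initial data are identified across the two systems by assumption, randomness enters the error only through the consensus-point discrepancy $X^\alpha[\rho_k^N] - X^\alpha[\rho_k]$, which is the quantity to control at rate $1/\sqrt{N}$. For the positions, 1-Lipschitz-ness of the projection $\Pi_{\DD}$ gives
\[
|X_{k+1}^i - \overline{X}_{k+1}^i| \leq |X_k^i - \overline{X}_k^i| + \dt\,|V_{k+1}^i - \overline{V}_{k+1}^i|,
\]
while for the velocities, exploiting $T_k^i\in\{0,1\}$ and its independence from the past,
\[
\BE|V_{k+1}^i - \overline{V}_{k+1}^i| \leq e^{-\nu\dt}\,\BE|V_k^i - \overline{V}_k^i| + (1-e^{-\nu\dt})\,\BE|W_k^i - \overline{W}_k^i|.
\]
Comparing \eqref{eq:Wi} and \eqref{eq:Wbar} componentwise, using $\big||a|-|b|\big|\leq|a-b|$ (so the $\sigma_0$ contribution cancels in the difference) and the independence of $\xi_k^i$ from the particles, one then obtains
\[
\BE|W_k^i - \overline{W}_k^i| \leq C(\lambda,\sigma,d)\,\BE\big(|X^\alpha[\rho_k^N] - X^\alpha[\rho_k]| + |X_k^i - \overline{X}_k^i|\big).
\]

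Next I would control the consensus-point error by the triangle decomposition
\[
\BE|X^\alpha[\rho_k^N] - X^\alpha[\rho_k]| \leq \BE|X^\alpha[\rho_k^N] - X^\alpha[\overline{\rho}_k^N]| + \BE|X^\alpha[\overline{\rho}_k^N] - X^\alpha[\rho_k]|,
\]
with $\overline{\rho}_k^N := N^{-1}\sum_j\delta_{\overline X_k^j}$. Because every trajectory stays in the compact convex set $\DD$, the weight $\omega_\alpha(x)=e^{-\alpha\EE(x)}$ is uniformly bounded above and below on $\DD$, and $x\mapsto x\omega_\alpha(x)$ is Lipschitz there by the local Lipschitz part of A2. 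A direct manipulation of the ratio defining $X^\alpha$ then yields the stability bound
\[
|X^\alpha[\rho_k^N] - X^\alpha[\overline{\rho}_k^N]| \leq \frac{C_\alpha}{N}\sum_{j=1}^N |X_k^j - \overline{X}_k^j|,
\qquad C_\alpha = C(\alpha,\DD,L_{\EE}).
\]
For the second summand, the $\overline X_k^j$ are i.i.d.\ with law $\rho_k$, so both the numerator $N^{-1}\sum_j \overline X_k^j\omega_\alpha(\overline X_k^j)$ and the denominator $N^{-1}\sum_j \omega_\alpha(\overline X_k^j)$ are empirical means of i.i.d.\ bounded random variables, and the $L^1$ law of large numbers together with the strictly positive lower bound on the denominator gives $\BE|X^\alpha[\overline{\rho}_k^N] - X^\alpha[\rho_k]| \leq C_\alpha'/\sqrt{N}$.

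Setting $e_k := N^{-1}\sum_i \BE[|X_k^i - \overline{X}_k^i| + |V_k^i - \overline{V}_k^i|]$, averaging the position and velocity estimates over $i$, and plugging in the consensus bound produces a scalar recursion of the form
\[
e_{k+1} \leq (1 + C\dt)\,e_k + \frac{C\dt}{\sqrt{N}},\qquad e_0=0,
\]
with $C = C(\alpha,\DD,\lambda,\sigma,\EE,d)$, so discrete Grönwall yields $\sup_{k\dt\leq T} e_k \leq Ce^{CT}/\sqrt{N}$, which is exactly \eqref{eq:propagation-of-chaos}. The \textbf{main obstacle} in this program is the $1/\sqrt{N}$ estimate on the consensus point: in an unbounded setting, $\int\omega_\alpha\,d\rho$ can be arbitrarily small and $x\omega_\alpha(x)$ is unbounded, so the componentwise law-of-large-numbers argument breaks; compactness of $\DD$ (ensured by the projection $\Pi_{\DD}$) is precisely what delivers the uniform upper and lower bounds that allow the stability plus LLN decomposition to close. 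The other delicate point is keeping track of the $\alpha$-dependence of $C_\alpha$, which matters if one wishes to combine this quantitative estimate with the large-$\alpha$ asymptotics carried out in Section~\ref{sec:convergence}.
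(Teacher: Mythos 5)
Your proposal follows essentially the same route as the paper's proof: a synchronous coupling with identical Bernoulli and Gaussian draws, the non-expansiveness of $\Pi_{\DD}$, the cancellation of $\sigma_0$ via $\bigl||a|-|b|\bigr|\leq|a-b|$, the triangle decomposition of the consensus-point error through the auxiliary empirical measure $\overline{\rho}^N_k$, and a discrete Gr\"onwall closure of the recursion $e_{k+1}\leq(1+C\dt)e_k+C\dt N^{-1/2}$. The only difference is presentational: where you sketch direct arguments for the two consensus-point bounds (the $\WW_1$-stability estimate and the $N^{-1/2}$ law-of-large-numbers estimate on the compact domain), the paper delegates these to Lemma~\ref{lem:measure-measure} and Lemma~\ref{lem:measure-law}, which carry exactly the content you describe.
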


Since the auxiliary nonlinear particles $\{(\overline{X}^i_k,\overline{V}^i_k)\}_{i=1}^N$ are independent and identically distributed, the above result can be extended to estimate the distance between the empirical measure $f_k^N$ and the kinetic model $f_k$.

\begin{corollary} \label{cor:propagation-of-chaos} 
Under the same settings of Theorem \ref{thm:propagation-of-chaos},  further assume that $f_0 \in \mathcal{P}_q(\mathcal{D}\times\RR^d )$ with $q>2d/(2d-1)$.
Then, there exists a positive constant $C=C(\mathcal{D},d,q,f_0,\alpha,\lambda, \sigma,\sigma_0 ,\mathcal{F})$ independent on $T$ and $N$ such that 
\begin{equation*}
 \sup_{k\Delta t\in [0,T]}\BE [\WW_1(f^N_k, f_k)] \leq C\left( e^{CT}\frac1{\sqrt{N}} +   \e(N)\right)
\end{equation*}
where 
\be \label{eq:eps1}
\e(N):=
 \begin{cases}
N^{-1/2}\log(1+N)  & \textup{if} \;\;d = 1 \\
N^{-1/(2d)} & \textup{if} \;\;d>1.
\end{cases}
\ee
\end{corollary}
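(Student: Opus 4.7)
The plan is to compare $f^N_k$ to $f_k$ via the empirical measure $\bar f^N_k := \frac{1}{N}\sum_{i=1}^N \delta_{(\overline X^i_k, \overline V^i_k)}$ of the $N$ independent mono-particle copies already constructed in Theorem \ref{thm:propagation-of-chaos}, and apply the triangle inequality
\[
\WW_1(f^N_k, f_k) \leq \WW_1(f^N_k, \bar f^N_k) + \WW_1(\bar f^N_k, f_k).
\]
The two pieces are then controlled by entirely different mechanisms.

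For the first piece, since the particles and their mean-field copies are built on the same probability space, the joint empirical measure $\frac{1}{N}\sum_i \delta_{((X^i_k, V^i_k),(\overline X^i_k, \overline V^i_k))}$ is an admissible transport plan between $f^N_k$ and $\bar f^N_k$, so that
\[
\BE[\WW_1(f^N_k, \bar f^N_k)] \leq \BE\!\left[\frac{1}{N}\sum_{i=1}^N \bigl(|X^i_k-\overline X^i_k| + |V^i_k-\overline V^i_k|\bigr)\right] \leq C\,e^{CT} N^{-1/2},
\]
directly from Theorem \ref{thm:propagation-of-chaos}. For the second piece I would invoke the sharp empirical approximation estimate of Fournier--Guillin: for $N$ i.i.d.\ samples drawn from a measure $\mu$ on $\RR^{D}$ with $M_q(\mu)<\infty$ for some $q>D/(D-1)$, the expected $\WW_1$-distance between $\mu$ and its empirical counterpart is bounded by $C\, M_q(\mu)^{1/q}\,\e(N)$, where the rates in \eqref{eq:eps1} correspond exactly to the critical case $D=2d=2$ (the logarithmic correction for $d=1$) and the sub-critical case $D=2d\geq 3$ (giving $N^{-1/D}=N^{-1/(2d)}$).

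To apply this to $\mu = f_k$, what remains is a uniform-in-$k$ control on $M_q(f_k)$ for $k\dt\in[0,T]$. Compactness of $\DD$ trivially handles the $x$-marginal, while for the $v$-marginal the disjoint-event decomposition $\overline V^i_{k+1} = \overline T^i_k \overline V^i_k + (1-\overline T^i_k)\overline W^i_k$ gives the recursion
\[
\BE|\overline V^i_{k+1}|^q = e^{-\nu\dt}\,\BE|\overline V^i_k|^q + (1-e^{-\nu\dt})\,\BE|\overline W^i_k|^q,
\]
and the elementary bound $|\overline W^i_k| \leq \lambda\,\mathrm{diam}(\DD) + \sigma(\sigma_0+\mathrm{diam}(\DD))\,|\overline\xi_k|$, combined with the Gaussianity of $\overline\xi_k$, makes $\BE|\overline W^i_k|^q$ a constant depending only on $(\DD,\lambda,\sigma,\sigma_0,q,d)$. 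A straightforward induction on $k$ then yields $\sup_k \BE|\overline V^i_k|^q \leq \max(M_q(f_0)^q,C)$, uniformly in $T$. The only non-routine bookkeeping is matching the Fournier--Guillin threshold $D/(D-1)$ in the ambient dimension $D=2d$ with the stated hypothesis $q>2d/(2d-1)$, and ensuring the moment bound carries no $T$-dependence, so that all growth in $T$ is confined to the $e^{CT}$ factor inherited from Theorem \ref{thm:propagation-of-chaos}.
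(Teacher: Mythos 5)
Your proposal is correct and follows essentially the same route as the paper: the triangle inequality through the auxiliary empirical measure $\bar f^N_k$, the coupling bound from Theorem~\ref{thm:propagation-of-chaos} for the first term, the Fournier--Guillin rate (the paper's Theorem~3.2) with ambient dimension $2d$ and $p=1$ for the second, and a uniform-in-$k$ moment bound on $f_k$ proved exactly as in the paper's Lemma~\ref{l:qmoments} via compactness of $\DD$ and the Bernoulli recursion for the velocities. No gaps.
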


\subsection{Proof of Theorem~\ref{thm:propagation-of-chaos}}

The proof is based on a coupling between the two particle systems, where the track the distance between the $i$-th particles $(X_k^i, V_k^i)$ and $(\overline{X}^i_k, \overline{V}_k^i)$ at each time step $k$. To do so, we first collect some stability results for the consensus point $X^\alpha[\cdot]$. 

\begin{lemma} \label{lem:measure-measure}
    Let $\mathcal{F}$ satisfy Assumption \ref{assume:propagation} and $\mu, \hat{\mu} \in \mathcal{P} (\RR^d)$ with $$ \int |x|^4 \mu(dx), \quad \int |x|^4 \hat{\mu}(dx) \leq K.$$ Then we have the following estimates
    $$|X^\alpha[\mu]-X^\alpha[\hat{\mu}]| \leq C_0 \WW_1(\mu, \hat{\mu}), $$
    where $C_0$ is a positive constant depending only on $\alpha, L_\EE, K$.
\end{lemma}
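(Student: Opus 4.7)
The plan is to reduce the estimate, via an algebraic decomposition of the ratio defining $X^\alpha[\cdot]$, to a Lipschitz-type control of the numerator and denominator separately. Writing $N[\mu] := \int x\,\omega_\alpha(x)\,\mu(dx)$ and $Z[\mu] := \int \omega_\alpha(x)\,\mu(dx)$, I would first use the identity
\[
X^\alpha[\mu] - X^\alpha[\hat\mu] \;=\; \frac{N[\mu] - N[\hat\mu]}{Z[\mu]} \;+\; X^\alpha[\hat\mu] \cdot \frac{Z[\hat\mu] - Z[\mu]}{Z[\mu]},
\]
which reduces the proof to (a) a positive lower bound on $Z[\mu], Z[\hat\mu]$; (b) a finite upper bound on $|X^\alpha[\hat\mu]|$; and (c) estimates $|N[\mu] - N[\hat\mu]|$, $|Z[\mu] - Z[\hat\mu]| \lesssim \WW_1(\mu, \hat\mu)$.

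For (a) I would combine the upper-growth inequality $\EE(x) - \EE(x^\ast) \leq c_u(1+|x|^2)$ from Assumption \ref{assume:propagation} with a Markov-type argument based on the fourth-moment bound: there exists $R = R(K)$ such that $\mu(B_R), \hat\mu(B_R) \geq 1/2$, and on $B_R$ the weight $\omega_\alpha$ is bounded below by an explicit positive constant depending on $\alpha, R$, and $\EE$, yielding $Z[\mu], Z[\hat\mu] \geq c_Z(\alpha, K, \EE) > 0$. The bound (b) then follows from $|X^\alpha[\hat\mu]| \leq M_1(\hat\mu)/Z[\hat\mu]$ together with the moment hypothesis.

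The core of (c) is the claim that both the scalar map $\omega_\alpha$ and the vector-valued map $\Psi(x) := x\,\omega_\alpha(x)$ are \emph{globally} Lipschitz on $\RR^d$, with constants $L_\omega, L_\Psi$ depending only on $\alpha, L_\EE$, and $\EE$. Their (a.e.) gradients are bounded pointwise by expressions of the form $(1+|x|^k)\,\omega_\alpha(x)$ with $k \in \{1,2\}$; the quadratic lower-growth bound $\EE(x) - \EE(x^\ast) \geq c_l|x|^2$ for $|x| > R_l$ gives Gaussian tail decay $\omega_\alpha(x) \lesssim \exp(-\alpha c_l|x|^2)$, which absorbs any polynomial prefactor and makes both sup-norms finite. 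Once $L_\omega, L_\Psi$ are available, integrating the Lipschitz inequalities against an optimal $\WW_1$-coupling $\gamma$ of $(\mu, \hat\mu)$ gives
\[
|Z[\mu] - Z[\hat\mu]| \leq \int |\omega_\alpha(x) - \omega_\alpha(y)|\,\gamma(dx,dy) \leq L_\omega\,\WW_1(\mu, \hat\mu),
\]
and analogously $|N[\mu] - N[\hat\mu]| \leq L_\Psi\,\WW_1(\mu, \hat\mu)$. Combining these with (a) and (b) delivers the asserted estimate with $C_0 = C_0(\alpha, L_\EE, K)$.

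The main obstacle is this global Lipschitz property of $\Psi$. The naive pointwise estimate coming directly from Assumption A2, namely $|\Psi(x) - \Psi(y)| \lesssim \bigl(1 + |y|(1+|x|+|y|)\bigr)|x-y|$, carries a prefactor that is not bounded, and a direct Cauchy--Schwarz step against the coupling would only yield a $\WW_2$-type estimate via the fourth-moment hypothesis, which is strictly weaker than the claimed $\WW_1$-bound. Exploiting the Gaussian decay of $\omega_\alpha$ afforded by the quadratic lower-growth condition is precisely what upgrades the local bound to a genuine global Lipschitz estimate; this is the step where the specific form of Assumption A2, rather than merely linear growth from below, is indispensable.
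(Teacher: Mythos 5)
Your argument is correct, and while it shares the inevitable skeleton with the paper's reference proof (split the difference of the two ratios into a numerator part and a denominator part, lower-bound the normalizations $Z[\mu],Z[\hat\mu]$ via the moment hypothesis and the upper growth bound on $\EE$, then integrate against a $\WW_1$-optimal coupling), the decisive analytic step is genuinely different. The paper defers to \cite[Lemma 3.2]{carrilloAnalyticalFrameworkConsensusbased2018}, where the integrands $|\omega_\alpha(x)-\omega_\alpha(y)|$ and $|x\omega_\alpha(x)-y\omega_\alpha(y)|$ are bounded by $|x-y|$ times an \emph{unbounded} quadratic prefactor coming from $|\EE(x)-\EE(y)|\le L_\EE(1+|x|+|y|)|x-y|$, and that prefactor is absorbed by Cauchy--Schwarz against the fourth moments, producing a $\WW_2$ bound; the paper then asserts that dropping the final Jensen/Cauchy--Schwarz step yields the $\WW_1$ version. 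As you correctly point out, that shortcut does not survive for measures that are merely in $\mathcal{P}_4(\RR^d)$: the prefactor stays unbounded, and one can in fact build measures with uniformly bounded fourth moments for which the $\WW_1$ estimate fails if $\EE$ satisfies only the first two growth conditions (the paper's shortcut is harmless only in its actual application, where $\mu,\hat\mu$ are supported in the compact set $\DD$). Your route --- using the coercivity $\EE(x)-\EE(x^\ast)\ge c_l|x|^2$ for $|x|>R_l$ to get Gaussian decay of $\omega_\alpha$, hence genuine global Lipschitz constants for $\omega_\alpha$ and $x\mapsto x\,\omega_\alpha(x)$, and then Kantorovich--Rubinstein --- is exactly what upgrades the estimate to $\WW_1$ in the stated generality, so your proof actually covers the lemma as written rather than only the compactly supported case. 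Two small caveats: the Lipschitz bounds must be obtained by the a.e.-gradient/segment argument you indicate (Rademacher applied to the locally Lipschitz $\EE$), since the naive two-point estimate $|\omega_\alpha(x)-\omega_\alpha(y)|\le\alpha\max(\omega_\alpha(x),\omega_\alpha(y))\,|\EE(x)-\EE(y)|$ still carries an unbounded factor when $|x|$ is small and $|y|$ large; and your constant inevitably depends also on $c_l,R_l,c_u$ and $\underline{\mathcal{F}}$, not only on $\alpha,L_\EE,K$ as the statement claims (a looseness already present in the paper).
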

\begin{proof}
We note that the estimate is equivalent to the one derived in  \cite[Lemma 3.2]{carrilloAnalyticalFrameworkConsensusbased2018} with $\WW_1$ instead of $\WW_2$. The proof can be carried out with the exact same argument, but without applying Jensen's inequality in the last step.
\end{proof}

\begin{lemma}[\cite{fornasierConsensusbasedOptimizationHypersurfaces2021}, Lemma 3.1] \label{lem:measure-law}
    Let $\EE$ satisfy Assumption \ref{assume:propagation} and $\DD\subseteq \RR^d $ be closed compact domain. Let $\{ \overline{X}^i_k\}_{i=1}^N$ for $k=0, 1, \dots $ be i.i.d. with common distribution $\rho_k \in \mathcal{P}(\DD)$ and denote with $\overline{\rho}^N_k$ be the corresponding empirical measure. 
    
   Then, there exists a constant $C_1$ depending only on $\textup{diam}(\mathcal{D)}$ and $C_{\alpha, \EE} \defeq \exp(\alpha (\sup_{x\in \DD}\EE(x) - \inf_{x\in \DD}\EE(x)))$
    such that 
    $$ \sup_{k \in \mathbb{N}} \BE \left[ |X^\alpha [\overline{\rho}^N_k]-X^\alpha [\rho_k]|\right] \leq C_1 N^{-1 / 2}.$$
\end{lemma}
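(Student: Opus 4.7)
The plan is to treat $X^\alpha[\overline{\rho}^N_k]$ as a ratio of empirical means and reduce the estimate to standard Monte Carlo bounds for bounded i.i.d.\ variables, with the denominator controlled deterministically by compactness of $\DD$. Introduce
\begin{equation*}
A_N := \frac{1}{N}\sum_{i=1}^N \overline{X}^i_k\,\omega_\alpha(\overline{X}^i_k), \qquad B_N := \frac{1}{N}\sum_{i=1}^N \omega_\alpha(\overline{X}^i_k),
\end{equation*}
and let $A := \BE[A_N]$, $B := \BE[B_N]$, so that $X^\alpha[\overline{\rho}^N_k] = A_N/B_N$ and $X^\alpha[\rho_k] = A/B$. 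I would then use the algebraic identity $A_N/B_N - A/B = (A_N - A)/B_N + (A/B)(B - B_N)/B_N$, together with $|A/B| = |X^\alpha[\rho_k]| \leq \mathrm{diam}(\DD)$ (since $\mathrm{supp}\,\rho_k \subseteq \DD$), to obtain
\begin{equation*}
|X^\alpha[\overline{\rho}^N_k] - X^\alpha[\rho_k]| \leq \frac{|A_N - A| + \mathrm{diam}(\DD)\,|B_N - B|}{B_N}.
\end{equation*}

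The key observation is that compactness of $\DD$ yields a deterministic lower bound on $B_N$: setting $\omega_\alpha^{\min} := \exp(-\alpha \sup_{x\in\DD}\EE(x))$ and $\omega_\alpha^{\max} := \exp(-\alpha \inf_{x\in\DD}\EE(x))$, one has $\omega_\alpha(\overline{X}^i_k) \in [\omega_\alpha^{\min},\omega_\alpha^{\max}]$ almost surely, hence $B_N \geq \omega_\alpha^{\min}$ surely and the ratio $\omega_\alpha^{\max}/\omega_\alpha^{\min} = C_{\alpha,\EE}$ controls the denominator uniformly. It then remains to bound the numerator fluctuations. Both $A_N$ and $B_N$ are empirical means of $N$ i.i.d.\ random variables bounded in norm by $\mathrm{diam}(\DD)\,\omega_\alpha^{\max}$ and $\omega_\alpha^{\max}$ respectively, so Jensen's inequality applied coordinate-wise (using $\BE|Y - \BE Y| \leq \mathrm{Var}(Y)^{1/2}$) yields
\begin{equation*}
\BE|A_N - A| \leq \frac{\sqrt{d}\,\mathrm{diam}(\DD)\,\omega_\alpha^{\max}}{\sqrt{N}}, \qquad \BE|B_N - B| \leq \frac{\omega_\alpha^{\max}}{\sqrt{N}}.
\end{equation*}

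Combining the two ingredients gives $\BE|X^\alpha[\overline{\rho}^N_k] - X^\alpha[\rho_k]| \leq C_1/\sqrt{N}$ with $C_1$ depending only on $\mathrm{diam}(\DD)$ and $C_{\alpha,\EE}$ (the dimensional factor $\sqrt{d}$ being implicitly absorbed into $C_1$). The supremum in $k$ is immediate since none of the constants depends on the common law $\rho_k$. The only delicate point is dispensing with the random denominator $B_N$: on an unbounded domain $\omega_\alpha^{\min}$ would vanish, forcing one to control the rare event $\{B_N < c\}$ by concentration arguments combined with moment assumptions on $\rho_k$, with a loss of uniformity in $k$. Compactness of $\DD$ is precisely what makes this subtlety disappear and delivers the clean $1/\sqrt{N}$ rate with a $k$-independent constant.
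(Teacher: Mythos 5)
Your argument is correct and is, in substance, the standard proof of this estimate: the paper itself does not reprove it but simply invokes \cite[Lemma 3.1]{fornasierConsensusbasedOptimizationHypersurfaces2021}, which establishes the $L^2$ version $\sup_k \BE\big[|X^\alpha[\overline{\rho}^N_k]-X^\alpha[\rho_k]|^2\big] \le \tilde C_1 N^{-1}$ by exactly your ratio decomposition (deterministic lower bound on the denominator from compactness, Monte Carlo variance bounds on numerator and denominator), and then applies Jensen's inequality to pass to $L^1$. Your version works directly in $L^1$, which is marginally more economical but mathematically equivalent. One small point to tidy up: the bound $|X^\alpha[\rho_k]| \le \mathrm{diam}(\mathcal{D})$, and likewise the bound $\mathrm{diam}(\mathcal{D})\,\omega_\alpha^{\max}$ on the summands of $A_N$, only hold as stated if $0 \in \mathcal{D}$; in general you get $\sup_{x\in\mathcal{D}}|x|$ instead. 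This is harmless because the difference $X^\alpha[\overline{\rho}^N_k]-X^\alpha[\rho_k]$ is invariant under a common translation of all particle positions (the weights $\omega_\alpha(\overline{X}^i_k)$ attached to each particle are unchanged), so one may assume without loss of generality that $\mathcal{D}$ contains the origin and recover a constant depending only on $\mathrm{diam}(\mathcal{D})$, $C_{\alpha,\mathcal{F}}$ and $d$; but the one-line reduction should be stated. Your closing remark about why compactness is essential (otherwise the denominator must be controlled on a rare event, losing uniformity in $k$) is accurate and matches the reason the authors restrict to a bounded search domain in this section.
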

\begin{proof} The proof can be carried out exactly as in \cite[Lemma 3.1]{fornasierConsensusbasedOptimizationHypersurfaces2021} where it is shown that $\sup_{k \in \mathbb{N}} \BE \left[ |X^\alpha [\overline{\rho}^N_k]-X^\alpha [\rho_k]|^2\right] \leq \tilde{C}_1 N^{-1}$. Also in this case, our claim simply follows from Jensen's inequality $\mathbb{E}[|X|]\leq \sqrt{\mathbb{E}[|X|^2]}$ for any random variable $X$.
\end{proof}
\begin{proof}[Proof of Theorem \ref{thm:propagation-of-chaos}]

First, we note that the projection map is non-expansive 
\[
|\Pi_\DD(x) = \Pi(y)| \leq |x- y| \quad\textup{for any}\quad x,y \in \RR^d\,,
\]
as a consequence of the convexity of the domain $\DD$. Therefore, at any algorithmic step $k$, and for each $i=1, \dots ,N$ we have
\be\label{eq:position}
\begin{split}
\mathbb{E}\left[|X^i_{k+1}-\overline{X}^i_{k+1}|\right ] &= \mathbb{E}\left[\left|\Pi_\mathcal{D} \left( X^i_k + \Delta t V^i_{k+1} \right)-\Pi_\mathcal{D} \big( \overline{X}^i_k + \Delta t \overline{V}^i_{k+1} \big)\right|\right] \\
&\leq \mathbb{E}\left[|X^i_k-\overline{X}^i_k|+\Delta t |V^i_{k+1}-\overline{V}^i_{k+1}|\right].
\end{split}
\ee

For the velocities, thanks to the choice $\overline{T}_k^i = T^i_k$ and $T^i_k\sim \textup{Bern}(e^{-\nu \Delta t})$, we observe that
\be
\begin{split}
    \BE \left[ |V^i_{k+1}-\overline{V}^i_{k+1}| \right] &= \BE \left[ |T^i_k\overline{V}^i_k + (1-T^i_k) \overline{W}^i_k-T^i_k\overline{V}^i_k - (1-T^i_k) \overline{W}^i_k| \right] \\
    &\leq \BE \left[ T^i_k |V^i_k-\overline{V}^i_k| \right]+\BE \left[ (1-T^i_k)|W^i_k-\overline{W}^i_k|\right] \\
    &= e^{-\nu\Delta t}\BE \left [|V^i_k-\overline{V}^i_k| \right]+(1-e^{-\nu\Delta t})\BE \left[ |W^i_k-\overline{W}^i_k| \right].
\end{split}
\ee
where in the last step we used that $\mathbb{E}[T^i_k] = e^{-\nu \Delta t}$. By using the explicit definitions of $W_k^i, \overline{W}_k^i$, we can estimate their difference as 
\begin{equation*}
\begin{split}
    \BE \left[ |W^i_k-\overline{W}^i_k| \right] &= \BE \left[ |\lambda (X^\alpha [\rho^N_k]-X^\alpha [\rho_k] -X^i_k- \overline{X}^i_k)| \right]\\
    &\qquad + \BE \left[ \sqrt{\sum_{\ell=1}^s \sigma^2 ( \xi^i_{k})_\ell^2 \left(\sigma_0 +(|(X^\alpha [\rho^N_k] - X_k^i)_\ell|-\sigma_0 -|(X^\alpha [\rho_k] - \overline{X}^i_k)_\ell|\right)^2 }\right]\\
    &\leq  \lambda\BE \left[ |(X^\alpha [\rho^N_k]-X^\alpha [\rho_k] -(X^i_k- \overline{X}^i_k))|\right] \\ &\qquad +\sigma\sum_{\ell=1}^d \BE \left[ |(\xi^i_k)_\ell|\right]\BE\left[|(X^\alpha [\rho^N_k]-X^\alpha [\rho_k] -X^i_k- \overline{X}^i_k)_\ell|\right]  \\
    &\leq (\lambda+\sigma \sqrt{d})\BE \left[ |X^\alpha [\rho^N_k]-X^\alpha [\rho_k] -X^i_k- \overline{X}^i_k|\right] \\
    &\leq (\lambda+\sigma  \sqrt{d}) \Big( \underbrace{\BE \left[|X^\alpha [\rho^N_k]-X^\alpha [\rho_k]|\right]}_{\eqqcolon I} + \BE \left[|X^i_k- \overline{X}^i_k|\right]\Big),
\end{split}
\end{equation*}
where we used the norm inequalities 
\begin{equation}\label{eq:norms}
    |x|=\sqrt{\sum_{l=1}^d |x_l|^2} \leq \sum_{l=1}^d|x_l|=\lVert x \rVert_1 \leq \sqrt{d}|x|,
\end{equation}
and that $\rev{\BE[|(\xi^i_k)_l|]=\sqrt{2/\pi} \leq 1}$ with the assumption $\xi^i_k=\overline{\xi}_k$ for all $i=1, \dots ,N$.  We note that the estimate is completely independent on the additional diffusion term $\tempconst \geq 0$.

To estimate the term $I$ above, consider now the empirical measures $\overline{\rho}_k^N = (1/N)\sum_{i=1}^N \delta_{\overline{X}_k^i}$ associated with the particles' positions $\{\overline{X}_k^i\}_{i=1}^N$. The triangle inequality gives us 
$$\BE \left[ |X^\alpha [\rho^N_k]-X^\alpha[\rho_k]| \right] \leq \BE \left[ |X^\alpha [\rho^N_k]-X^\alpha[\overline{\rho}^N_k]| \right] + \BE \left[|X^\alpha [\overline{\rho}^N_k]-X^\alpha[\rho_k]| \right]$$
Then, thanks to the boundedness of $\DD$, we can apply Lemma \ref{lem:measure-measure} to obtain
$$\BE \left[ |X^\alpha [\rho^N_k]-X^\alpha[\overline{\rho}^N_k]| \right] \leq C_0 \BE \left[ \WW_1(\rho^N_k, \overline{\rho}^N_k) \right] \leq C_0 \BE \left[\frac{1}{N}\sum_{j=1}^N|X^j_k-\overline{X}^j_k|\right].$$
Note that the second inequality follows from the fact that $(X_k^i,\overline{X}^i_k)$ is only one of the possible couplings between the two particle system, and it is not necessarily the optimal one realizing the Wasserstein distance.
Meanwhile, Lemma \ref{lem:measure-law} yields
$$\BE \left[ |X^\alpha[\overline{\rho}^N_k]-X^\alpha [\rho_k]| \right] \leq C_1N^{-1/2}, $$ since $\{\overline{X}^i_k \}_{i=1}^N$ are i.i.d. with a common distribution $\rho_k.$ So far, the constants $C_0, C_1$ depend only on the size of the domain $\mathcal{D}, \alpha$, and the objective function $\mathcal{F}$. 

By collecting the above estimates, we can bound the expected difference in the velocities as 
\be \label{eq:velocity}
\begin{split}
    \BE \left[ |V^i_{k+1}-\overline{V}^i_{k+1}| \right]
    &\leq e^{-\nu\Delta t}\BE \left[|V^i_k-\overline{V}^i_k|\right] \\
    &+(1-e^{-\nu\Delta t})(\lambda+\sigma \sqrt{d}) \left( C_0 \BE \left[\frac{1}{N}\sum_{j=1}^N|X^j_k-\overline{X}^j_k|\right] + C_1N^{-1/2}  \right).
\end{split}
\ee
Combining the upper bounds \eqref{eq:position} and \eqref{eq:velocity} we obtain
{
\begin{equation*}
\begin{split}
    & \BE\left[ |X^i_{k+1}-\overline{X}^i_{k+1}|\right] + \BE \left[|V^i_{k+1}-\overline{V}^i_{k+1}|\right] \\
    &\leq \BE\left[ |X^i_k-\overline{X}^i_k| \right] + (1+\dt)\BE \left[|V^i_{k+1}-\overline{V}^i_{k+1}|\right]\\
    &\leq \BE\left[ |X^i_k-\overline{X}^i_k| \right] (1+\Delta t) e^{-\nu\Delta t} \BE \left[|V^i_k-\overline{V}^i_k|\right]  \\
    & \qquad+ (1-e^{-\nu\Delta t})(\lambda+\sigma \sqrt{d}) \left( C_0 \BE \left[\frac{1}{N}\sum_{j=1}^N|X^j_k-\overline{X}^j_k|\right] + C_1N^{-1/2}  \right) \\
    &= \BE \left[ |X^i_k-\overline{X}^i_k| \right]+C_0(1+\Delta t)\rev{(1-e^{-\nu \dt})} (\lambda +\sigma\sqrt{d} )\BE\left[\frac{1}{N}\sum_{j=1}^N|X^j_k-\overline{X}^j_k|\right]  \\ 
    &\qquad + (1+\Delta t)e^{-\nu\Delta t}\BE\left[ |V^i_k-\overline{V}^i_k| \right] +C_1(1+\Delta t)(1-e^{-\nu\Delta t}) (\lambda +\sigma  \sqrt{d} )N^{-1/2}\,,
\end{split}
\end{equation*}}
where in the last step we simply rearranged the terms. To later apply a (discrete) Gr{\"o}nwall-type argument, it is crucial to obtain estimates of order $(1 + C_2\Delta t)$,  or $C_3\Delta t$ for some constants $C_2,C_3$. As a consequence of $1-e^{-\nu \Delta t}\leq \nu \Delta t $, $e^{-\nu \Delta t} \leq 1$, and the assumption $\Delta t \in (0,1]$, we have the following further estimates:
\begin{align*}
(1 + \Delta t ) (1 - e^{-\nu \Delta t}) &\leq (1 + \Delta t) \nu \Delta t \leq \rev{2 \nu \dt} \\
e^{-\nu\Delta t} (1 + \Delta t) &\leq (1 + \Delta t)\,.
\end{align*}
With the above, and summing over all particles $i=1, \dots, N$, we get 
\begin{align*}
    \BE& \left[ \frac{1}{N} \sum_{i=1}^N |X^i_{k+1}-\overline{X}^i_{k+1}| + |V^i_{k+1}-\overline{V}^i_{k+1}| \right] \\
    &\leq (1+\Delta tC_2) \BE\left[ \frac{1}{N} \sum_{i=1}^N |X^i_k-\overline{X}^i_k| + |V^i_k-\overline{V}^i_k| \right] + \Delta t C_3 N^{-1/2}, 
\end{align*}
where $C_2$  and $C_3$ are constants depending on $d, \lambda, \sigma, \alpha, \text{diam}(\mathcal{D}), \mathcal{F}$ (from $C_0, C_1$). 
Iterating the above inequality for $h=1,\dots,k$, one obtains 
\begin{align*}
    \BE& \left[ \frac{1}{N} \sum_{i=1}^N |X^i_k-\overline{X}^i_k| + |V^i_k-\overline{V}^i_k| \right] \\ 
    &\leq (1+\Delta t C_2)^k \BE  \left[ \frac{1}{N} \sum_{i=1}^N |X^i_0-\overline{X}^i_0| + |V^i_0-\overline{V}^i_0| \right] + \Delta tC_3 N^{-1/2}\sum_{h=1}^k(1+\Delta t C_2)^h.
\end{align*}
Since $X^i_0 = \overline{X}^i_0, V^i_0 = \overline{V}^i_0$ for all $i=1, \dots, N$, the first term disappears. 
By using the formula $\sum_{h=1}^k(1+\Delta t C_2)^h = ((1+\Delta t C_2)^{k}-(1+\Delta tC_2))/(\Delta t C_2)\,$,
the second term can instead be simplified as
\begin{align*}
    \BE \left[ \frac{1}{N} \sum_{i=1}^N |X^i_k-\overline{X}^i_k| + |V^i_k-\overline{V}^i_k| \right]
    & \leq \Delta tC_3 \frac{1}{\sqrt{N}}\sum_{h=1}^k(1+\Delta t C_2)^h \\
    & = \frac{C_3}{C_2} \rev{\frac{1}{\sqrt{N}}} \left ( (1 +\Delta t C_2)^k - (1 +\Delta t C_2) \right) \leq \rev{\frac{C_3}{C_2\sqrt{N}}e^{k\Delta t C_2} }
\end{align*}
where we used $(1 +\Delta t C_2)^k \leq e^{k\Delta t C_2}$. Finally, taking supremum over $k\dt \in[0,T]$, we obtain \eqref{eq:propagation-of-chaos}.

\end{proof}

\subsection{Proof of Corollary~\ref{cor:propagation-of-chaos}}

With Theorem \ref{thm:propagation-of-chaos}, we quantified the distance between the particle system \eqref{eq:particle-sys-alg} and $N$ copies of independent, $f_k$-distributed, mono-particles processes \eqref{eq:nonlinear-update}. The derived estimate can be translated into an estimate on the 
Wasserstein distance between the corresponding empirical measure $f_k^N$ and $\overline{f}_k^N$, since
\begin{equation} \label{eq:coupling}
\WW_1(f_k^N, \overline{f}^N_k) \leq \frac{1}{N} \sum_{i=1}^N \left(|X_k^i-\overline{X}^i_k| +  |V_k^i-\overline{V}^i_k| \right).
\end{equation}
Note that we have used the fact that  $|(x,v) - (\overline{x},\overline{v})|\leq |x - \overline{x}| + |v - \overline{v}|$ for any $x,\overline{x},v,\overline{v}\in \RR^d$.

To quantify the distance between $\overline{f}^N_k$ and $f_k$, we recall the following quantitative result. Below, $\mu^N$ is the empirical measure of a system of independent $\mu$-distributed particles.
\begin{theorem}[{\cite[Theorem 1]{fournierRateConvergenceWasserstein2013}}] \label{lem:moments}
Let $\mu \in \mathcal{P}(\RR^{\tilde d})$ and let $p>0$. Assume that $M_q(\mu)<\infty$ for some $q>p$.  There exists a constant $C$ depending only on $p,{\tilde d},q$ such that for all $N \geq 1$:
$
\mathbb{E}\left[  \WW_p(\mu,\mu^N)\right] \leq C M_q^{p/q}(\mu) \e_{p}(N)
$
with 
\begin{equation*}
\e_{p}(N):=
\begin{cases}
N^{-1/2} + N^{-(q-p)/q} & \textup{if} \;\;p>{\tilde d}/2 \;\;\;\textup{and} \;\; q \neq 2p, \\
N^{-1/2}\log(1+N) + N^{-(q-p)/q} & \textup{if} \;\;p={\tilde d}/2 \;\;\;\textup{and} \;\; q \neq 2p, \\
N^{-p/{\tilde d}} + N^{-(q-p)/q} & \textup{if} \;\;p\in(0,{\tilde d}/2) \;\;\;\textup{and} \;\; q \neq {\tilde d}/({\tilde d}-p).
\end{cases}
\end{equation*}

\end{theorem}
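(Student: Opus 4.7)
The plan is to establish the empirical-measure rate of Fournier--Guillin via a multi-scale dyadic decomposition of $\RR^{\tilde d}$, combined with Bernstein-type concentration on each dyadic cell and an optimal tail truncation that converts the moment hypothesis $M_q(\mu) < \infty$ into the correction term $N^{-(q-p)/q}$.

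First, I would truncate the tail. Fix a radius $R>0$ (to be optimized later in $N$) and bound the contribution of mass outside $B_R(0)$ using the interpolation-style inequality $\int_{|x| > R} |x|^p \, \mu(dx) \leq R^{p-q} \int |x|^q\, \mu(dx)$. This yields a coupling bound between $\mu$ and its restriction $\mu_R$ to $B_R(0)$ whose contribution to $\BE[\WW_p^p]$ is of order $R^{p-q} M_q^q(\mu)$. The balance of this term against the core estimate below will produce the $N^{-(q-p)/q}$ correction.

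Next, I would bound $\BE[\WW_p^p(\mu_R, \mu_R^N)]$ for the compactly supported piece. Decompose $B_R(0)$ into nested dyadic cubes $\{Q_{n,k}\}$ of side-length $\simeq R\, 2^{-n}$ at scale $n = 0, 1, 2, \dots$, noting that the number of cubes at scale $n$ is $\lesssim 2^{n\tilde d}$. A cascade transport plan, matching masses at the coarsest scale and then refining level by level, yields the key geometric inequality
\[
\WW_p^p(\mu_R, \mu_R^N) \;\lesssim\; R^p \sum_{n\geq 0} 2^{-np}\, \sum_k \bigl|\mu(Q_{n,k}) - \mu^N(Q_{n,k})\bigr|,
\]
in which the factor $2^{-np}$ reflects that intra-cube transport at scale $n$ costs at most $(R 2^{-n})^p$ per unit mass. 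Since $N\mu^N(Q_{n,k}) \sim \mathrm{Bin}(N, \mu(Q_{n,k}))$, one has $\BE|\mu^N(Q_{n,k}) - \mu(Q_{n,k})| \leq \sqrt{\mu(Q_{n,k})/N}$, and Cauchy--Schwarz over the $\lesssim 2^{n\tilde d}$ non-empty cells gives $\sum_k \BE|\cdots| \lesssim 2^{n\tilde d/2} N^{-1/2}$. Combining,
\[
\BE[\WW_p^p(\mu_R, \mu_R^N)] \;\lesssim\; R^p\, N^{-1/2} \sum_{n\geq 0} 2^{n(\tilde d/2 - p)}.
\]

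The three cases of $\e_p(N)$ come from analyzing this geometric sum. For $p > \tilde d/2$ the series converges and yields the base $N^{-1/2}$ rate; for $p = \tilde d/2$ it diverges logarithmically and must be cut off at the scale where cubes become finer than the typical particle spacing (namely $2^{n\tilde d} \sim N$), producing the $\log(1+N)$ factor; for $p < \tilde d/2$ it diverges polynomially and the optimal cut-off $2^n \sim N^{1/\tilde d}$ balances the growing number of cubes against the shrinking per-cube cost, giving the $N^{-p/\tilde d}$ rate. Combining with the tail contribution from the first step, taking $p$-th roots, and optimizing $R$ against $N$ then introduces the additional $N^{-(q-p)/q}$ correction in every regime. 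The main obstacle is the cascade-transport inequality itself: rigorously constructing the hierarchical plan and verifying that the cost distributes as $2^{-np}$ across scales without double-counting requires care, since coarser matchings must be consistently refined at finer scales. Once that geometric inequality is in hand, the remainder of the proof reduces to elementary binomial concentration and optimization of the cutoff radius.
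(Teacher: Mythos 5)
This statement is not proved in the paper at all: it is quoted as an external result (Theorem~1 of Fournier and Guillin), so there is no internal proof to compare against. Your proposal is an attempt to reconstruct the Fournier--Guillin argument, and its core architecture is the right one: the hierarchical dyadic decomposition, the cascade transport inequality bounding $\WW_p^p$ by $\sum_n 2^{-np}\sum_k|\mu(Q_{n,k})-\mu^N(Q_{n,k})|$, the binomial bound $\BE|\mu^N(Q)-\mu(Q)|\leq\min\bigl(2\mu(Q),\sqrt{\mu(Q)/N}\bigr)$ combined with Cauchy--Schwarz over the $\lesssim 2^{n\tilde d}$ cells, and the three regimes of $\e_p(N)$ emerging from the geometric series $\sum_n 2^{n(\tilde d/2-p)}$ with the cutoff at $2^{n\tilde d}\sim N$. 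That part is faithful to the original proof.

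However, your tail-handling step is quantitatively wrong and cannot deliver the stated theorem. With a single truncation radius $R$, the compact-core estimate necessarily carries the factor $R^p$ (transport across $B_R(0)$ costs up to $R^p$ per unit mass), so you are balancing $R^p\e_p(N)$ against the tail term $R^{p-q}M_q^q(\mu)$. Optimizing over $R$ gives, e.g.\ in the regime $p>\tilde d/2$, the rate $N^{-(q-p)/(2q)}$ rather than $N^{-1/2}+N^{-(q-p)/q}$; for $p=1$, $q=2.5$, $\tilde d=1$ this is $N^{-0.3}$ versus the claimed $N^{-1/2}$, a genuine loss. The correct argument does not truncate: it decomposes all of $\RR^{\tilde d}$ into dyadic annuli $2^nB\setminus 2^{n-1}B$, applies the unit-scale cascade estimate to each rescaled annulus, and crucially keeps the annulus mass $q_n:=\mu(2^nB\setminus 2^{n-1}B)\lesssim 2^{-nq}M_q^q(\mu)$ \emph{inside} the concentration bound, so the $n$-th layer contributes $2^{np}\min\bigl(q_n,\sqrt{q_n/N}\,\cdots\bigr)$; summing over $n$ with the crossover at $2^{nq}\sim M_q^q(\mu)N$ is what produces the additive correction $M_q^{p/q}(\mu)N^{-(q-p)/q}$ without contaminating the main rate. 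Two further minor points: the cutoff of the divergent geometric sum must be justified by switching to the trivial bound $\sum_k|\mu(Q)-\mu^N(Q)|\leq 2$ at fine scales (you gesture at this but do not state it), and Fournier--Guillin actually bound $\BE[\WW_p^p]$, so ``taking $p$-th roots'' at the end would change every exponent; this discrepancy is harmless only because the paper applies the result with $p=1$.
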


Note that the order of convergence given in Corollary \ref{cor:propagation-of-chaos} is a consequence of the above theorem with $\tilde{d} = 2d$ (since we are considering the position-velocity space $\RR^d\times \RR^d$) and $p=1$.
Also, it tells us that the error introduced by any Monte Carlo strategy is related to the moments of the kinetic density. Therefore, we collect an estimate on the moments of $f_k = \textup{Law}(\overline{X}_k, \overline{V}_k)$, for which the assumption of boundedness of $\DD$ is crucial.

\begin{lemma}
\label{l:qmoments}
Let $f_0 \in \mathcal{P}_q(\DD\times\RR^d)$ for some $q>1$, and $f_k = \textup{Law}(\overline{X}_k, \overline{V}_k)$ with $(\overline{X}_k, \overline{V}_k)$ iteratively defined by \eqref{eq:nonlinear-update}.
There exists a positive constant $C = C(q,\mathcal{D},\lambda, \sigma,\sigma_0)$ such that
\[
M_q(f_k) \leq \max \left\{ M_q(f_0), C \right\} \qquad \textup{for all steps $k$.} 
\]
\end{lemma}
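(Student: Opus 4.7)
The plan is to exploit the compactness of the search domain $\mathcal D$ to bound the position contribution uniformly, and then use the Bernoulli velocity-renewal structure to derive a contractive recursion for $\mathbb E[|\overline V_k|^q]$.

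\smallskip

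\textbf{Step 1: Reduction to a velocity bound.} Let $R:=\sup_{x\in\mathcal D}|x|$ and $D:=\mathrm{diam}(\mathcal D)$; both are finite. Since $\overline X_k\in\mathcal D$ (by the projection $\Pi_{\mathcal D}$) and $\rho_k=\mathrm{Law}(\overline X_k)$ is supported in $\mathcal D$, the convexity of $\mathcal D$ yields $X^\alpha[\rho_k]\in\mathcal D$ as a weighted average, and hence $|X^\alpha[\rho_k]-\overline X_k|\le D$. By Minkowski's inequality and $|(x,v)|\le |x|+|v|$, it suffices to control $\mathbb E[|\overline V_k|^q]$:
\[
M_q(f_k)\le R + \bigl(\mathbb E[|\overline V_k|^q]\bigr)^{1/q}.
\]

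\smallskip

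\textbf{Step 2: Uniform $L^q$ bound for the jump distribution.} From \eqref{eq:Wbar}, each component satisfies
\[
|(\overline W_k)_\ell|\le \lambda D + \sigma(\sigma_0+D)\,|\overline\xi_{k,\ell}|,
\]
and summing over $\ell$ and using the norm equivalence in \eqref{eq:norms} gives $|\overline W_k|\le d\lambda D+\sigma(\sigma_0+D)\sum_\ell|\overline\xi_{k,\ell}|$. Since $\overline\xi_k\sim\mathcal N(0,I_d)$ has finite moments of every order, we obtain a constant $C_W=C_W(q,\mathcal D,\lambda,\sigma,\sigma_0,d)$ with
\[
\sup_{k}\,\mathbb E[|\overline W_k|^q]\le C_W.
\]

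\smallskip

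\textbf{Step 3: Bernoulli recursion.} The crucial observation is that because $\overline T_k\in\{0,1\}$ the velocity update in \eqref{eq:nonlinear-update} gives the pointwise identity
\[
|\overline V_{k+1}|^q = \overline T_k\,|\overline V_k|^q + (1-\overline T_k)\,|\overline W_k|^q.
\]
Taking expectations and using $\mathbb E[\overline T_k]=e^{-\nu\Delta t}$ together with the independence of $\overline T_k$ from $(\overline V_k,\overline W_k)$,
\[
\mathbb E[|\overline V_{k+1}|^q] = e^{-\nu\Delta t}\,\mathbb E[|\overline V_k|^q] + (1-e^{-\nu\Delta t})\,\mathbb E[|\overline W_k|^q].
\]

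\smallskip

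\textbf{Step 4: Induction and conclusion.} Set $M:=\max\{\mathbb E[|\overline V_0|^q],\,C_W\}$. A one-line induction using Step 2 shows $\mathbb E[|\overline V_k|^q]\le M$: if it holds at step $k$, then
\[
\mathbb E[|\overline V_{k+1}|^q]\le e^{-\nu\Delta t}\,M+(1-e^{-\nu\Delta t})\,C_W\le M.
\]
Combining with Step 1 and $\mathbb E[|\overline V_0|^q]^{1/q}\le M_q(f_0)$ gives the desired uniform-in-$k$ bound, absorbing $R$ and $C_W^{1/q}$ into the constant $C=C(q,\mathcal D,\lambda,\sigma,\sigma_0)$.

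\smallskip

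\textbf{Main obstacle.} The only nontrivial point is Step 2: without the compactness of $\mathcal D$ the coefficients of $\overline\xi_k$ in \eqref{eq:Wbar} depend on $|X^\alpha[\rho_k]-\overline X_k|$, which would couple $\mathbb E[|\overline W_k|^q]$ back to $\mathbb E[|\overline X_k|^q]$ and possibly $\mathbb E[|\overline V_k|^q]$, spoiling the contractive structure. Compactness of $\mathcal D$ (plus convexity, to keep $X^\alpha[\rho_k]\in\mathcal D$) is precisely what decouples the two moments and allows the Bernoulli recursion of Step 3 to close on itself.
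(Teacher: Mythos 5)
Your proposal is correct and follows essentially the same route as the paper: compactness of $\mathcal D$ bounds the position moments and gives $|X^\alpha[\rho_k]-\overline X_k|\le \mathrm{diam}(\mathcal D)$, hence a uniform $L^q$ bound on $\overline W_k$, and the Bernoulli renewal yields the convex-combination recursion $\mathbb E[|\overline V_{k+1}|^q]\le \max\{\mathbb E[|\overline V_k|^q],\mathbb E[|\overline W_k|^q]\}$ closed by induction. Your version is in fact slightly more explicit than the paper's (the pointwise identity in Step 3 and the convexity argument for $X^\alpha[\rho_k]\in\mathcal D$), but there is no substantive difference in approach.
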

\begin{proof}
First, we note that if the particles' positions are projected towards the bounded domain $\mathcal{D}$ at every iteration, then the consensus point also belongs to $\DD$. Moreover, we have $\textup{ess} \sup|\overline{X}_k| \leq R$ for $R$ large enough such that $\DD \subset B(0,R)$. It follows $\BE[|\overline{X}_k|^q] \leq R^q$.
For the velocity, it holds 
\begin{align*}
\mathbb{E}[|\overline{V}_{k+1}|^q] &\leq e^{-\nu\Delta t} \mathbb{E}[|\overline{V}_{k}|^q] + (1 - e^{-\nu\Delta t })\mathbb{E}[|\overline{W}_{k}|^q] \\
& \leq \max\{\mathbb{E}[|\overline{V}_{k}|^q], \mathbb{E}[|\overline{W}_{k}|^q]\}\,.
\end{align*}
To estimate $\BE[|\overline{W}_{k}|^q]$, we again use that $|X^\alpha[\rho_k] - \overline{X}_k|\leq \textup{diam}(\DD)$ everywhere:
\begin{align*}
\BE[|\overline{W}_k|^q] &= \iint |\lambda(X^\alpha[\rho_k]) - x) + \sigma(\sigma_0 + X^\alpha[\rho_k] - x) |^q \rho_{k}(dx) p^\xi(d\xi) \\
& \leq c_q \int (\lambda \textup{diam}(\DD))^q + \sigma^q(\sigma_0 + \textup{diam}(\DD))^q |\xi|^q p^\xi(d\xi)\\
& \leq C_1
\end{align*}
where $c_q>0$ is a constant depending on $q$, while $C_1$ depends on $q,\mathcal{D},\lambda, \sigma,\sigma_0$ and the $q$-th moment of the standard Gaussian distribution. Therefore, since $\mathbb{E}[|\overline{X}|^q] \leq R^q$ and $\BE[|\overline{V}_k|^q] \leq \max\{\BE[|\overline{V}_0|],C_1^q\}$ we also have that the $q$-th moment $M_q(f_k)$ can be upper bounded with either $M_q(f_0)$ or a constant which depends on $C_1$.
\end{proof}

\begin{proof}[Proof of Corollary \ref{cor:propagation-of-chaos}.]
First, thanks to Lemma \ref{lem:moments} and Lemma \ref{l:qmoments}, we have that for all $k$
\[
 \BE \left[ \WW_1 (\overline{f}^N_k, f_k) \right] \leq CM_q(f_k)\e(N) \leq C_1 \e(N)
\]
with $C_1 = C_1(\mathcal{D},q,f_0, \lambda, \sigma, \sigma_0, p^\xi)>0$. Next, we note that by Theorem \ref{thm:propagation-of-chaos},
\[
\sup_{k\Delta t\in [0,T]} \mathbb{E} \left[\WW_1(f_k^N,\overline{f}_k^N) \right] \leq C_2e^{C_2T}\frac1{\sqrt{N}}
\]
where $C_2 = C_2(\alpha, \mathcal{D},\lambda, \sigma, \sigma_0 ,\mathcal{F})>0$. By triangular inequality, we can conclude that
\begin{align*}
\sup_{k\Delta t\in [0,T]} \BE \left[\WW_1(f^N_k, f_k)\right]&\leq
\sup_{k\Delta t\in [0,T]} \BE \left[\WW_1(f^N_k, \overline{f}^N_k)\right] + \sup_{k\Delta t\in [0,T]} \BE \left[ \WW_1(\overline{f}^N_k, f_k) \right]\\
&\leq C\left( e^{CT}\frac1{\sqrt{N}} +   \e(N)\right) \,.
\end{align*}
\end{proof}

\section{Convergence to global minimum}
\label{sec:convergence}

In this section, we study under which conditions the kinetic approximation of the swarm-based optimization with jumps algorithm converges to a global minimum of the objective function. In line with the previous section, we consider the kinetic approximation \eqref{eq:nonlinear-update} to the particle dynamics \eqref{eq:particle-sys-alg} with projection towards a compact domain $\DD$ and non-generate diffusion.

The convergence analysis of a PSO-type dynamics without jumps was carried out in \cite{huangGlobalConvergenceParticle2023}, in time-continuous settings, following the approach proposed in \cite{carrilloAnalyticalFrameworkConsensusbased2018} for the analysis of CBO dynamics. The strategy relies, first, on proving an exponential decay of the variance of the system, which leads to convergence towards a specific point in the search space. Then, the authors provide an estimate on how far that point is from global minimizers.

In our analysis, we will follow a different strategy, proposed in \cite{fornasierConsensusBasedOptimizationMethods2024} for CBO dynamics, that provides non-asymptotic quantitative error estimates. The proof is based on a quantitative version of the Laplace principle (see Proposition \ref{prop:quant-laplace}) and an estimate on the average distance from the minimizer. For the first time, this approach is extended to second-order dynamics, like PSO ones. We will consider the optimization problem to attain a unique global solution $x^*$ (see Assumption \ref{assume:convergence} below), and track the error evolution through  the following functional
\begin{equation}\label{eq:energy}
\mathcal{H}[f] \defeq \int \left( \gamma\vert x-x^\ast \vert +  \vert v-\lambda(x^\ast-x) \vert\right)f(dx,dv),
\end{equation}
for any $f\in \mathcal{P}_1(\RR^d\times\RR^d)$. The weight $\gamma>0$ balances the contribution of the error $|x - x^*|$ in the position space and the one in the velocity space $\vert v-\lambda(x^\ast-x) \vert$. Note that $\mathcal{H}[f] = 0$ if and only if $f = \delta_{(x^*,0)}$, that is, if particles are concentrated on the minimizer with null velocity. We note that the adaptation of the strategy of \cite{fornasierConsensusBasedOptimizationMethods2024} to time-discrete settings was also done in \cite{borghi2024thesis} for first-order CBO-type dynamics, and that the case of non-degenerate diffusion was recently considered in \cite{huangFaithfulGlobalConvergence2025} , which allowed for stronger convergence results of CBO-type dynamics.

\subsection{Assumptions and main results}

For the notation of this section, we refer to the one previously introduced in Section \ref{sec:notation}. For the convergence analysis, we consider the non-linear mono-particle process $(\overline{X}_k,\overline{V}_k)$, $k = 0,1, 2,\dots$, updated according to \eqref{eq:nonlinear-update}, and its law $f_k =\textup{Law}(\overline{X}_k,\overline{V}_k)$.

In the previous section, we showed that if the objective function $\EE$ satisfies Assumption~\ref{assume:propagation}, 
then  $f_k$ can be considered to be a good approximation of the particle system for $N \gg1$. To show that convergence, in the sense that $\mathcal{H}[f_k] \ll 1$, we will consider additional assumptions on $\EE$:

\begin{assumption} \label{assume:convergence} 
    The objective function $\mathcal{F}: \RR^d \to \RR$ is continuous and satisfies the following:
    \begin{itemize}
        \item[(A1)]  (uniqueness) there exists a unique global minimizer $x^\ast \in \RR^d$;

        \item[(A2)]  (growth conditions around minimizer) there exists $c_p, p>0, R_p>0$ and a lower bound $\mathcal{F}_\infty$ such that 
        \be
        \begin{dcases}
            \|x-x^\ast\|^p_\infty \leq c_p \left( \mathcal{F}(x) -\mathcal{F}(x^\ast)\right) &\textup{for all} \;\;x \in B_r^\infty(R_p) \\
             \mathcal{F}_\infty< \mathcal{F}(x)-\mathcal{F}(x^\ast) & \textup{for all} \;\;x \in \RR^d \setminus B_r^\infty(R_p)\,.
        \end{dcases}
        \ee
    \end{itemize}
\end{assumption}

These assumptions are the same as the ones considered in \cite{fornasierConvergenceAnisotropicConsensusBased2022}. Assumption A2 is an inverse continuity  assumption  which locally requires a polynomial growth of $\EE$ in  a neighbourhood of the minimizers.  


\begin{theorem}[Main convergence theorem] \label{thm:convergence} 
Let $\mathcal{F}$ satisfy Assumption \ref{assume:convergence}, and $(\overline{X}_0, \overline{V}_0)$ be distributed according to a given $f_0 \in \mathcal{P}(\DD \times \RR^d)$ with $\DD$ compact domain and $\textup{supp}(f_0) = \DD\times \RR^d$. 
Let  $f_k = \textup{Law}(\overline{X}_k, \overline{V}_k)$ be updated according to \eqref{eq:nonlinear-update} with noise distribution $\overline{\xi}_k\sim \mathcal{N}(0,I_d)$.

Fix an arbitrary accuracy $\e>0$. Then, there exists a set of parameters $\lambda, \sigma,\sigma_0,\nu,\dt>0$ and $\gamma>0$ such that given the time horizon 
\begin{equation}
    T^\star \defeq \frac{2}{A}\log \left(\frac{2\mathcal{H}[f_0]}{\e} \right)\,,
\end{equation}
where $A$ is a parameter-dependent constant, it holds
\begin{equation}
    \min_{k\Delta t \in [0,T^\star]}\, \mathcal{H}[f_k] \leq \e
\end{equation}
provided $\alpha>0$ is sufficiently large. Moreover, it holds 
$\mathcal{H}[f_k]  \leq e^{-k A\Delta t/2}\mathcal{H}[f_0]$, 
until the desired accuracy $\e$ is reached.
\end{theorem}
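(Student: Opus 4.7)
The plan is to adapt the strategy of \cite{fornasierConsensusBasedOptimizationMethods2024} — originally designed for first-order CBO — to the second-order BGK dynamics, using the Lyapunov functional $\mathcal{H}$ in \eqref{eq:energy}. The idea is to establish a one-step contraction $\mathcal{H}[f_{k+1}] \leq (1-A\Delta t/2)\mathcal{H}[f_k]$, valid as long as $\mathcal{H}[f_k] \geq \e$, by combining: (i) a direct expansion of the update rule using $x^\ast \in \DD$ and non-expansiveness of $\Pi_\DD$, (ii) the jump structure of the velocity update, which naturally produces a relaxation term driving $\overline V_k$ toward $\lambda(X^\alpha[\rho_k]-\overline X_k)$, and (iii) a quantitative Laplace-type principle bounding $|X^\alpha[\rho_k]-x^\ast|$ in terms of $\mathcal{H}[f_k]$ and $\alpha$.

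For the one-step estimate, denote $E^x_k=|\overline X_k - x^\ast|$ and $E^v_k=|\overline V_k - \lambda(x^\ast-\overline X_k)|$, so that $\mathcal{H}[f_k]=\BE[\gamma E^x_k + E^v_k]$. Using $\Pi_\DD(x^\ast)=x^\ast$ and non-expansiveness, I would write, for $\lambda\dt\leq 1$,
\begin{equation*}
E^x_{k+1}\leq (1-\lambda\dt)E^x_k + \dt\, |\overline V_{k+1}-\lambda(x^\ast-\overline X_k)|,
\end{equation*}
and similarly, via a triangle inequality with intermediate point $\lambda(x^\ast-\overline X_k)$,
\begin{equation*}
E^v_{k+1}\leq |\overline V_{k+1}-\lambda(x^\ast-\overline X_k)| + \lambda\dt |\overline V_{k+1}|.
\end{equation*}
The key is then to exploit the Bernoulli structure of the velocity update: taking expectations and using $\BE[\overline T_k]=e^{-\nu\dt}$ gives
\begin{equation*}
\BE[|\overline V_{k+1}-\lambda(x^\ast-\overline X_k)|] = e^{-\nu\dt}\,\BE[E^v_k] + (1-e^{-\nu\dt})\,\BE[|\overline W_k-\lambda(x^\ast-\overline X_k)|],
\end{equation*}
and the $\overline W_k$ term splits as $\lambda(X^\alpha[\rho_k]-x^\ast) + \sigma\,(\text{noise})$, producing (after the same Jensen-type bound used in the proof of Theorem \ref{thm:propagation-of-chaos}) a clean estimate of the form $\lambda\,\BE[|X^\alpha[\rho_k]-x^\ast|] + \sigma\sqrt{d}(\sigma_0+\text{diam}(\DD))$. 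Collecting everything and choosing $\gamma$ large enough to absorb the cross-term $\dt|\overline V_{k+1}-\lambda(x^\ast-\overline X_k)|$ appearing in the position bound, I obtain
\begin{equation*}
\mathcal{H}[f_{k+1}]\leq (1-A\dt)\mathcal{H}[f_k] + B\dt\,\BE[|X^\alpha[\rho_k]-x^\ast|] + C\dt\,\sigma,
\end{equation*}
with $A,B,C>0$ depending only on $\gamma,\lambda,\nu,\sigma,\sigma_0,\DD,d$.

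Next, I would apply a quantitative Laplace principle in the spirit of \cite[Prop.~4.1]{fornasierConsensusBasedOptimizationMethods2024}: under Assumption \ref{assume:convergence}, for any $q>0$ small, $r>0$ small,
\begin{equation*}
|X^\alpha[\rho_k]-x^\ast|\leq (c_p q)^{1/p} + \frac{\text{diam}(\DD)\, e^{-\alpha q}}{\rho_k(B_r^\infty(x^\ast))\,\omega_\alpha(x^\ast)},
\end{equation*}
provided $(c_p q)^{1/p}\leq r\leq R_p$. Choosing $q$ of the order of $(\mathcal{H}[f_k]/\gamma)^p$ and $\alpha$ sufficiently large makes both contributions arbitrarily small relative to $\mathcal{H}[f_k]$. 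In particular, by taking $\sigma,\sigma_0$ small (the remainder $C\dt\sigma$ must be $\leq (A\dt/4)\e$) and $\alpha$ large (controlling the Laplace residual by $(A/4B)\mathcal{H}[f_k]$) for each $k$ with $\mathcal{H}[f_k]\geq \e$, the one-step bound improves to $\mathcal{H}[f_{k+1}]\leq (1-A\dt/2)\mathcal{H}[f_k]$. Iterating yields the claimed exponential bound $\mathcal{H}[f_k]\leq e^{-kA\dt/2}\mathcal{H}[f_0]$ until the first index $k^\ast$ with $\mathcal{H}[f_{k^\ast}]\leq \e$; the choice $T^\ast=(2/A)\log(2\mathcal{H}[f_0]/\e)$ ensures $k^\ast\dt\leq T^\ast$.

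The main obstacle is propagating a uniform lower bound on $\rho_k(B_r^\infty(x^\ast))$ along the iteration, which is required to apply the Laplace bound inductively. In the first-order CBO setting this follows from the non-degenerate Fokker--Planck diffusion; here mass is transported through the velocity dynamics, so the argument must combine the non-degenerate Gaussian jumps (which spread the velocity distribution whenever $\sigma_0>0$) with the free-transport step, using the initial support condition $\text{supp}(f_0)=\DD\times\RR^d$ and the continuity of $\mathcal{F}$. A secondary difficulty is the joint calibration of $\gamma,\lambda,\nu,\dt$ so that the implicit constant $A$ remains strictly positive; one needs $\gamma$ large enough to dominate the $\dt|\overline V_{k+1}-\lambda(x^\ast-\overline X_k)|$ coupling in the position estimate, yet small enough that $(1-e^{-\nu\dt})/\gamma$ does not overwhelm the velocity contraction in the jump step, and this interplay is the genuinely new feature compared to the first-order analyses.
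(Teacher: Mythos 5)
Your overall strategy coincides with the paper's: the same functional $\mathcal{H}$, the same splitting of the one-step estimate into a position part (non-expansiveness of $\Pi_\DD$ plus $\lambda\dt\le 1$) and a velocity part (the Bernoulli structure of the jump), the same quantitative Laplace principle to control $|X^\alpha[\rho_k]-x^\ast|$, and the same final iteration. Two points deserve comment.

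First, in your bound for $\BE[|\overline W_k-\lambda(x^\ast-\overline X_k)|]$ you replace the noise amplitude $\sigma_0+|(X^\alpha[\rho_k]-\overline X_k)_\ell|$ by the crude bound $\sigma_0+\textup{diam}(\DD)$, producing a constant remainder $C\dt\,\sigma$ that does not vanish as $\mathcal{H}[f_k]\to0$; you are then forced to take $\sigma=O(\e)$. The paper instead keeps the term $\sigma\sqrt d\,\BE[|\overline X_k-x^\ast|]\le \sigma\sqrt d\,\mathcal{H}[f_k]/\gamma$ and absorbs it into the contraction, which is precisely where the admissible window $\gamma\in[\nu 4\sigma\sqrt d/\lambda+2\lambda,\,\nu-2\lambda]$ of Proposition~\ref{prop:evolution-of-errors} comes from; only $\sigma_0$ has to be taken of order $\e$. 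Your version still proves the existence statement as written, but in a strictly weaker parameter regime, and it changes the calibration of $\gamma$ you would need.

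Second, and more importantly, the step you flag as ``the main obstacle'' --- the lower bound on $\rho_k(B_r^\infty(x^\ast))$ --- is exactly the new technical content of the proof for second-order dynamics, and you leave it as a plan rather than an argument. The paper resolves it in two steps: Lemma~\ref{lem:v-in-ball} uses the non-degenerate Gaussian density (this is where $\sigma_0>0$ is essential, since it bounds the variance of each velocity component away from zero) to show that $\overline V_{k}$ lands in $B_r^\infty(\lambda(x^\ast-\overline X_{k-1}))$ with probability at least $\delta_r$ uniformly in $k$; Proposition~\ref{p:mass} then converts this, via convexity of $B_r^\infty(x^\ast)$ and $\lambda\dt\in[0,1]$, into $\rho_k(B_r^\infty(x^\ast))\ge\delta_r^k\,\rho_0(B_r^\infty(x^\ast))$. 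Note that this is \emph{not} the uniform-in-$k$ lower bound you say you need: it decays geometrically, and the proof compensates by choosing $\alpha$ so large that $\sqrt d\,e^{-\alpha q_\e}/(\gamma\delta_{r_\e}^{T^\star/\dt})\le A/(4B)$, i.e.\ $\alpha$ depends (exponentially) on the time horizon $T^\star$. Without either this horizon-dependent choice of $\alpha$ or a genuinely uniform mass bound (which the paper does not establish), your induction cannot close; so this step needs to be supplied, not just announced.
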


\subsection{Proof of Theorem~\ref{thm:convergence}}

As mentioned, the central tool in the convergence analysis consists of estimating the distance between the consensus point and the global minimizers $x^*$ via a quantitative Laplace principle. We recall for completeness the result, with the notation of Assumption \ref{assume:convergence}.

\begin{proposition}[{\cite[Proposition 1]{fornasierConvergenceAnisotropicConsensusBased2022}}]
    Let $\rho \in \PP(\RR^d)$, $\underline{\mathcal{F}}=0$ w.l.o.g., and fix $\alpha>0$. Define $\mathcal{F}_r \defeq \sup_{x \in B_r(x^\ast)}\mathcal{F}(x)$. Then, under the Assumption \ref{assume:convergence}, for any $r \in (0, R_p]$ and $q>0$ such that $q+\mathcal{F}_r\leq \mathcal{F}_\infty$, we have 
    \begin{equation}\label{eq:quant-laplace}
         \vert X^\alpha[\rho]-x^\ast\vert \leq c_p\sqrt{d}(q+\mathcal{F}_r)^{1/p}+\frac{\sqrt{d}\exp(-\alpha q)}{\rho(B^\infty_r(x^\ast))}\int\vert x-x^\ast \vert  \rho(dx)\,.
    \end{equation}
\end{proposition}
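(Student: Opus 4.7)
The plan is to exploit the exponential concentration of $\omega_\alpha$ by splitting the defining integral of $X^\alpha[\rho]$ into a ``good'' region where Assumption~\ref{assume:convergence} A2 controls $\|x-x^*\|_\infty$ pointwise, and a ``bad'' region where $\omega_\alpha$ is uniformly tiny. First I would write
\[
X^\alpha[\rho]-x^* \;=\; \int (x-x^*)\,\frac{\omega_\alpha(x)}{\int \omega_\alpha\,d\rho}\,\rho(dx),
\]
take the Euclidean norm inside and use $|y|\leq \sqrt{d}\,\|y\|_\infty$ so that the problem reduces to bounding the $\omega_\alpha\rho/\int\omega_\alpha d\rho$-weighted integral of $\|x-x^*\|_\infty$.

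Setting $\mathcal{F}(x^*)=0$ without loss of generality, I would introduce the sublevel set $\Omega := \{x\in\RR^d : \mathcal{F}(x) \leq q+\mathcal{F}_r\}$ and decompose the integral as $\int_\Omega + \int_{\Omega^c}$. The decisive point is that the hypothesis $q+\mathcal{F}_r \leq \mathcal{F}_\infty$ together with the second clause of A2 (which forces $\mathcal{F}(x) > \mathcal{F}_\infty$ outside $B_{R_p}^\infty(x^*)$) gives the inclusion $\Omega \subseteq B_{R_p}^\infty(x^*)$; the first clause of A2 then applies uniformly on $\Omega$ and yields $\|x-x^*\|_\infty \leq (c_p(q+\mathcal{F}_r))^{1/p}$. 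Since the normalised weight has total mass at most one on $\Omega$, the $\Omega$ contribution is bounded by $\sqrt{d}(c_p(q+\mathcal{F}_r))^{1/p}$, matching the first term of \eqref{eq:quant-laplace} under the convention that the $1/p$ power is absorbed into $c_p$.

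For the tail $\Omega^c$, by construction $\omega_\alpha(x) < e^{-\alpha(q+\mathcal{F}_r)}$, so bounding $\|x-x^*\|_\infty \leq |x-x^*|$ and pulling the exponential out gives
\[
\int_{\Omega^c}\|x-x^*\|_\infty\,\omega_\alpha(x)\,\rho(dx) \;\leq\; e^{-\alpha(q+\mathcal{F}_r)}\int |x-x^*|\,\rho(dx).
\]
To close the estimate I still need a lower bound on the denominator $\int \omega_\alpha\,d\rho$: restricting the integration to $B_r^\infty(x^*)$, where by definition of $\mathcal{F}_r$ one has $\mathcal{F}(x)\leq \mathcal{F}_r$, gives the clean bound $\int \omega_\alpha\,d\rho \geq e^{-\alpha\mathcal{F}_r}\rho(B_r^\infty(x^*))$. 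Taking the ratio cancels the $e^{-\alpha\mathcal{F}_r}$ factor and leaves exactly $e^{-\alpha q}/\rho(B_r^\infty(x^*))$, which, multiplied by $\sqrt{d}$ and the moment $\int|x-x^*|\,d\rho$, yields the second term of \eqref{eq:quant-laplace}.

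The only delicate point is the set inclusion $\Omega\subseteq B_{R_p}^\infty(x^*)$, which is precisely what the compatibility requirement $q+\mathcal{F}_r\leq \mathcal{F}_\infty$ is engineered to guarantee; without it the pointwise bound on $\Omega$ would fail since A2's growth estimate is only local. Beyond this, no compactness or regularity of $\mathcal{F}$ is needed: continuity enters only through measurability of sublevel sets and finiteness of $\mathcal{F}_r$, and the argument holds for every $\rho\in\mathcal{P}(\RR^d)$ that assigns positive mass to $B_r^\infty(x^*)$ (if this mass vanishes both sides of \eqref{eq:quant-laplace} become vacuous).
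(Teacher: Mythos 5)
Your proposal is correct: the paper does not prove this proposition itself but imports it from the cited reference, and your argument is a faithful reconstruction of the standard proof there --- bound $|X^\alpha[\rho]-x^*|$ by the $\omega_\alpha$-weighted first moment, split over the sublevel set $\{\mathcal F\le q+\mathcal F_r\}$ (contained in $B^\infty_{R_p}(x^*)$ thanks to $q+\mathcal F_r\le\mathcal F_\infty$) and its complement, and lower-bound the normalizing constant by $e^{-\alpha\mathcal F_r}\rho(B_r^\infty(x^*))$. The two wrinkles you flag are in the paper's statement rather than in your argument: the growth condition as written yields $c_p^{1/p}(q+\mathcal F_r)^{1/p}$ rather than $c_p(q+\mathcal F_r)^{1/p}$, and the denominator bound requires $\mathcal F_r$ to be the supremum over the $\ell^\infty$ ball $B_r^\infty(x^*)$ (as in the original reference), consistent with the ball appearing in \eqref{eq:quant-laplace}.
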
\label{prop:quant-laplace}

Note that we are using anisotropic noise version of the estimate, hence the $\sqrt{d}$ term is included in the upper bound, contrary to the isotropic version proposed in \cite{fornasierConsensusBasedOptimizationMethods2024}. 
As is clear from the estimate above, applying the quantitative Laplace principle requires a lower bound on the mass near the minimizer. In the following lemma, we establish an auxiliary result that provides control over the mass in velocity space, which is instrumental in deriving a lower bound for $\rho_k^\infty(B_r(x^\ast))$.

\begin{lemma} \label{lem:v-in-ball} 
Under the settings of Theorem \ref{thm:convergence}, fix a radius $r>0$. 
For some constant $\delta_r=\delta_r(d, \lambda, \sigma, \tempconst, \mathcal{D}) >0$ it holds
$$ \mathbb{P} \left(\overline{V}_k \in  B^\infty_r\left(\lambda (x^* - \overline{X}_k)\right) \right) \geq \rev{\delta_r}, \quad \text{ for all } \;\;k \geq 1\,.$$  
\end{lemma}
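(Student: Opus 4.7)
The plan is to condition on a jump occurring at the transition from step $k-1$ to step $k$, which has probability $1-e^{-\nu\Delta t}>0$. On this event $\overline{V}_k=\overline{W}_{k-1}$ is Gaussian with each component variance bounded below by $\sigma^2\sigma_0^2>0$, so it admits a density $p_y$ on $\RR^d$ that is bounded below on any bounded region. The decomposition
\begin{equation*}
\mathbb{P}\bigl(\overline{V}_k \in B_r^\infty(\lambda(x^* - \overline{X}_k))\bigr) \geq (1-e^{-\nu\Delta t})\,\BE\!\left[\mathbb{P}\bigl(\overline{V}_k \in B_r^\infty(\lambda(x^* - \overline{X}_k)) \mid \overline{X}_{k-1},\, \overline{T}_{k-1}=0\bigr)\right]
\end{equation*}
reduces the task to estimating the inner conditional probability uniformly in $y:=\overline{X}_{k-1} \in \DD$.

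Fixing such a $y$ and the jump, the relation $\overline{X}_k=\Pi_\DD(y+\Delta t\,\overline{V}_k)$ rewrites the target event as $\{\overline{V}_k \in S_y\}$, with
\begin{equation*}
S_y := \bigl\{v \in \RR^d : \lVert v - \lambda(x^* - \Pi_\DD(y+\Delta t\,v))\rVert_\infty \leq r\bigr\}.
\end{equation*}
The central step is to show that $S_y$ always contains an $\ell^\infty$-ball of radius bounded below by a $y$-independent quantity. I would establish this via a fixed-point argument: the continuous map $\Phi_y(v):=\lambda(x^*-\Pi_\DD(y+\Delta t\,v))$ sends the convex compact set $K:=\lambda(x^*-\DD)$ into itself (where $x^* \in \DD$ is assumed, as implicit in the convergence setting), so Brouwer's theorem yields a fixed point $v^*_y\in K$. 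Non-expansiveness of $\Pi_\DD$ in the Euclidean norm together with the equivalence $\lVert\cdot\rVert_\infty \leq |\cdot| \leq \sqrt{d}\,\lVert\cdot\rVert_\infty$ gives that $\Phi_y$ is $\lambda\Delta t\sqrt{d}$-Lipschitz in $\ell^\infty$, whence for $\rho:=r/(1+\lambda\Delta t\sqrt{d})$ a direct triangle estimate yields $B^\infty_\rho(v^*_y)\subseteq S_y$.

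To finish, I would lower-bound the Gaussian mass on $B^\infty_\rho(v^*_y)$. Since $v^*_y \in K$ and $B^\infty_\rho(v^*_y)$ lies within a bounded region depending only on $\lambda,\DD,r$, on this region $p_y$ is bounded below by a constant depending only on $d,\sigma,\sigma_0,\lambda,\DD$: the mean $\mu_y=\lambda(X^\alpha[\rho_{k-1}]-y)$ has $\ell^\infty$-norm at most $\lambda\,\textup{diam}_\infty(\DD)$, each component standard deviation lies in $[\sigma\sigma_0,\,\sigma(\sigma_0+\textup{diam}_\infty(\DD))]$, and critically $\sigma_0>0$ prevents degeneracy. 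Integrating $p_y$ over $B^\infty_\rho(v^*_y)$ then produces a strictly positive constant independent of $y$ and $k$, and multiplying by $1-e^{-\nu\Delta t}$ and integrating in $y$ yields the desired $\delta_r$.

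The main obstacle is guaranteeing that the ball inside $S_y$ has size independent of $y$; the Brouwer/Lipschitz step is what carries this, and some care is required because $\Pi_\DD$ is naturally non-expansive in $\ell^2$ while the event is phrased in $\ell^\infty$. A minor technical point is that the resulting constant formally depends on $\Delta t$ and $\nu$ through $1-e^{-\nu\Delta t}$ and $\rho$; since Theorem~\ref{thm:convergence} fixes these parameters, we read the listed dependencies of $\delta_r$ as only tracking the quantities that remain variable once $\Delta t,\nu$ are chosen.
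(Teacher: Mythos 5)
Your proof is correct, but it takes a genuinely different route from the paper's. The paper proves the bound by a one-step recursion: it splits on the Bernoulli jump variable, observes that on the jump branch the quantity $\overline{W}_k-\lambda(x^\ast-\overline{X}_k)$ equals $\lambda(X^\alpha[\rho_k]-x^\ast)+\text{noise}$ (the position cancels, leaving a one-dimensional Gaussian with bounded mean and variance in $[\sigma^2\sigma_0^2,\sigma^2(\sigma_0+\mathrm{diam}(\DD))^2]$ per coordinate), gets a coordinate-wise lower bound $\tilde\delta$, and then iterates $\mathbb{P}_{k+1}\geq\min\{\mathbb{P}_k,\tilde\delta^d\}$ down to $k=0$, which is where the assumption $\mathrm{supp}(f_0)=\DD\times\RR^d$ enters. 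Crucially, the event the paper actually controls (and the one used in Proposition~\ref{p:mass}) is $\overline{V}_{k+1}\in B_r^\infty(\lambda(x^\ast-\overline{X}_k))$, i.e.\ the ball is centred using the \emph{pre-update} position, so there is no self-reference and no fixed-point argument is needed. You instead read the lemma literally, with the ball centred at the \emph{post-update} position $\overline{X}_k=\Pi_\DD(\overline{X}_{k-1}+\Delta t\,\overline{V}_k)$, which makes the target set depend on $\overline{V}_k$ itself; your Brouwer-plus-Lipschitz argument resolving this is correct (the Lipschitz constant $\lambda\Delta t\sqrt d$ and the radius $\rho=r/(1+\lambda\Delta t\sqrt d)$ check out), and your conditioning on a jump at the final transition gives the bound for all $k\geq1$ directly, without induction and without using the full-support assumption on $f_0$. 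What each approach buys: the paper's is shorter and yields a $\delta_r$ independent of $\nu,\Delta t$ (via the $\min$ trick) at the price of depending on $f_0$ and of proving only the index-shifted statement; yours proves the statement as literally written, drops the $f_0$-dependence, but picks up the factor $1-e^{-\nu\Delta t}$ and $\Delta t$-dependence through $\rho$ — harmless downstream, as you correctly note, since in Theorem~\ref{thm:convergence} the parameters are fixed before $\alpha$ is sent large. You might flag that the simpler index-shifted version suffices for Proposition~\ref{p:mass}, so the fixed-point machinery, while valid, is avoidable.
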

\begin{proof} 

From the update rule \eqref{eq:nonlinear-update}, we have 
\begin{multline} \label{eq:PV}
    \mathbb{P} \left(\overline{V}_{k+1} \in B^\infty_r\left(\lambda (x^* - \overline{X}_k)\right)\right) = (1-\Delta t) \mathbb{P}\left(\overline{V}_k \in B^\infty_r\left(\lambda (x^* - \overline{X}_k)\right)\right) \\ + \Delta t \mathbb{P} \left(\overline{W}_k \in B^\infty_r\left(\lambda (x^* - \overline{X}_k)\right)\right)\,.
\end{multline}
Now, consider an arbitrary coordinate $\ell \in \{1,\dots,d\}$. By definition of $\overline{W}_k$, see \eqref{eq:Wbar}, we note that the condition $(\overline{W}_k)_\ell \in B_r(\lambda(x^\ast - \overline{X}_k)_\ell)$ is equivalent to 
\[
\lambda(X^\alpha[\rho_k] - x^*)_\ell + \sigma(\sigma_0 + |(X^\alpha - \overline{X}_k)_\ell| )\overline{\xi}_{k,\ell} \in B_r(0)\,,
\]
which we rewrite for simplicity as $c_1 + B\overline{\xi}_{k,\ell} \in [-r,r]$ with $c_1 :=  \lambda(X^\alpha[\rho_k] - x^*)_\ell$, $c_2 := \sigma(\sigma_0 + |(X^\alpha - \overline{X}_k)_\ell| )$. Note that, thanks to the compactness assumption on $\DD$ and $\sigma,\sigma_0>0$, it holds $|c_1|\leq \lambda \textup{diam}(\DD)$ and $\sigma \sigma_0 \leq c_2 \leq \sigma(\sigma_0 + \textup{diam}(\DD))$. Since $\overline{\xi}_{k,\ell}\sim \mathcal{N}(0,1)$, we have $A + B\overline{\xi}_{k,\ell} \sim \mathcal{N}(c_1,c_2^2)$ and 
\begin{align*}
\mathbb{P}\left( (\overline{W}_k)_\ell \in B_r(\lambda(x^\ast - \overline{X}_k)_\ell) \right) & = \mathbb{P}\left(c_1 + c_2\overline{\xi}_{k,\ell} \in [-r,r] \right) \\
& = \frac1{\sqrt{2\pi c_2^2}}\int_{-r}^r e^{-\frac{(x-c_1)^2}{2c_2^2}}\, dx \\
& \geq \frac1{\sqrt{2\pi c_2^2}} e^{-\frac{c_1^2 + r^2}{2c_2^2}}
\geq \tilde{\delta}
\end{align*}
for some  $\tilde\delta=\tilde\delta(r, \lambda, \sigma, \tempconst, \mathcal{D})>0$, where we used the bounds on $|c_1|,c_2$. We note that for writing the density of $c_1 + c_2\xi_\ell$ the role of the additional parameter $\sigma_0$, which ensures that $c_2>0$, it is essential.

By iterating the argument for all $\ell = 1,\dots,d$ we obtain $\mathbb{P} \left(\overline{W}_k \in B^\infty_r\left(\lambda (x^* - \overline{X}_k)\right)\right) \geq \tilde{\delta}^d$ since $\overline{\xi}_{k,1},\dots,\overline{\xi}_{k,d}$ are independent. We plug this estimate in \eqref{eq:PV} and iterate for all time steps $k$ to obtain the desired estimate:
\begin{align*} \label{eq:PV}
    \mathbb{P} \left(\overline{V}_{k+1} \in B^\infty_r\left(\lambda (x^* - \overline{X}_k)\right)\right) &= (1-\Delta t) \mathbb{P}\left(\overline{V}_k \in B^\infty_r\left(\lambda (x^* - \overline{X}_k)\right)\right)+\dt\tilde{\delta}^d \\
    & \geq \min\left \{\mathbb{P}\left(\overline{V}_k \in B^\infty_r\left(\lambda (x^* - \overline{X}_k)\right)\right), \tilde{\delta}^d\right \} \\
    & \geq \min\left \{\mathbb{P}\left(\overline{V}_0 \in B^\infty_r\left(\lambda (x^* - \overline{X}_0)\right)\right), \tilde{\delta}^d\right \}=: \delta_r
\end{align*}
where $\delta_r=\delta_r(d, \lambda, \sigma, \tempconst, \mathcal{D},f_0)>0$ , since $\textup{supp}(f_0) = \DD\times \RR^d$ by assumption.
\end{proof}

Next, we use the above estimate in the velocity space, to provide an estimate on the mass around the minimizer.

\begin{proposition}  \label{p:mass}
Under the settings of Theorem \ref{thm:convergence}, fix a radius $r>0$.
 Then if $\lambda\Delta t\in [0,1]$, it holds
$$\rho_k(B_r(x^\ast)) \geq \delta_r^k\, \rho_0(B_r(x^\ast))  \,,  \quad \text{ for all }\; k\geq 0$$
where $\delta_r>0$ is the constant from Lemma \ref{lem:v-in-ball}. 
\end{proposition}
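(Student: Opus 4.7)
The plan is to argue by induction on $k$ and reduce to the one-step estimate
$\rho_{k+1}(B_r(x^\ast)) \geq \delta_r\, \rho_k(B_r(x^\ast))$,
from which iteration immediately yields the claim. The case $k=0$ is trivial. For the inductive step, I would condition on $\{\overline{X}_k \in B_r(x^\ast)\}$ and write
\[
\rho_{k+1}(B_r(x^\ast)) \,\geq\, \int_{B_r(x^\ast) \times \RR^d} \mathbb{P}\!\left(\overline{X}_{k+1} \in B_r(x^\ast) \,\big|\, \overline{X}_k = x,\, \overline{V}_k = v\right) f_k(dx,dv),
\]
so it is enough to bound the integrand from below by $\delta_r$ uniformly in $x \in B_r(x^\ast)$ and $v \in \RR^d$.

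The key geometric observation is the following: writing $\overline{V}_{k+1} = \lambda(x^\ast - x) + \eta$ for a perturbation $\eta$, the update rule \eqref{eq:nonlinear-update} gives
\[
x + \Delta t\, \overline{V}_{k+1} - x^\ast \,=\, (1 - \lambda \Delta t)(x - x^\ast) + \Delta t\, \eta .
\]
Using $\lambda \Delta t \in [0,1]$, the triangle inequality, and $|x - x^\ast| \leq r$, this quantity has Euclidean norm bounded by $(1 - \lambda \Delta t)\, r + \Delta t\, |\eta|$; provided $|\eta| \leq \lambda r$ the bound becomes exactly $r$. Since $x^\ast \in \DD$ is fixed by the projection $\Pi_\DD$ and the projection is non-expansive, the same bound passes through $\Pi_\DD$, so $\overline{X}_{k+1} \in B_r(x^\ast)$.

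To turn this geometric statement into a probabilistic lower bound, I would invoke Lemma \ref{lem:v-in-ball} with the auxiliary radius $r' \defeq \lambda r /\sqrt{d}$: with probability at least $\delta_{r'}$, $\overline{V}_{k+1} \in B_{r'}^\infty(\lambda(x^\ast - x))$, and via the inequality $|\eta| \leq \sqrt{d}\,\|\eta\|_\infty$ this gives exactly $|\eta| \leq \lambda r$ as required. Identifying the constant $\delta_r$ in the proposition with this $\delta_{r'}$ (which still depends only on $d, \lambda, \sigma, \sigma_0, \DD, f_0$ and $r$), the one-step estimate follows, and iterating over $k$ closes the argument.

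The main technical point is the calibration of the two radii: there must be enough slack in the contraction $(1 - \lambda \Delta t)(x - x^\ast)$ to absorb a perturbation of Euclidean size $\sqrt{d}\, r'$, which is precisely why the condition $\lambda \Delta t \in [0,1]$ is needed. A secondary point to verify is that the lemma yields the desired bound conditionally on $\overline{X}_k = x$: this is straightforward from inspection of its proof, since the Gaussian lower bound on $\mathbb{P}(\overline{W}_k \in \cdot)$ derived there is uniform in the value of $\overline{X}_k \in \DD$, and the Bernoulli convex-combination recursion propagates any such pointwise bound to $\overline{V}_{k+1}$ without loss.
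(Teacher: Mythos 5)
Your proof is correct and rests on the same two pillars as the paper's argument: the contraction/convex-combination structure of the position update when $\lambda\Delta t\in[0,1]$, and the uniform velocity-mass bound of Lemma \ref{lem:v-in-ball}. The execution differs in ways worth noting. The paper works with $\ell^\infty$ balls and writes $\overline{V}_{k+1}=\lambda(\Theta-\overline{X}_k)$, so that $\overline{X}_k+\Delta t\,\overline{V}_{k+1}=(1-\lambda\Delta t)\overline{X}_k+\lambda\Delta t\,\Theta$ lies in $B^\infty_r(x^\ast)$ by convexity whenever both endpoints do; the velocity event it conditions on is $B^\infty_r(\lambda(x^\ast-\overline{X}_k))$ at the \emph{same} radius $r$, so the constant is literally the $\delta_r$ of the lemma. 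Your Euclidean-ball version needs the inscribed $\ell^\infty$ ball, hence the recalibrated radius $r'=\lambda r/\sqrt d$ and the constant $\delta_{r'}$; this is harmless for the use made of the proposition in Theorem \ref{thm:convergence}, but it does not literally reproduce ``the constant from Lemma \ref{lem:v-in-ball}''. Your pointwise conditioning on $(\overline{X}_k,\overline{V}_k)=(x,v)$ is in fact \emph{more} careful than the paper's global conditioning, whose last step replaces $\mathbb{P}(\overline{X}_k\in B^\infty_r(x^\ast)\mid\Theta\in B^\infty_r(x^\ast))$ by the unconditional probability without justification; your disintegration needs only a bound uniform in $(x,v)$ and sidesteps this issue. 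One imprecision: conditionally on $\overline{V}_k=v$ with $v$ outside the target ball, the ``keep the old velocity'' branch contributes nothing, so the uniform lower bound is $(1-e^{-\nu\Delta t})\,\tilde{\delta}^{\,d}$ rather than the lemma's $\min\{\,\cdot\,,\tilde{\delta}^d\}$ --- still a positive constant (now depending also on $\nu\Delta t$), so your claim that the recursion propagates the bound ``without loss'' is wrong in the letter, though the proof survives with an adjusted constant. Finally, you treat the projection $\Pi_\DD$ explicitly via non-expansiveness and $\Pi_\DD(x^\ast)=x^\ast$, which the paper omits; for the Euclidean ball this is exactly right and is a genuine gain in rigour.
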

\begin{proof}
By Bayes' theorem and Lemma \ref{lem:v-in-ball}, we observe that
\begin{align*}
        \mathbb{P}(\overline{X}_{k+1} \in B^\infty_r(x^\ast)) &= \mathbb{P}(\overline{X}_k + \Delta t\overline{V}_{k+1} \in B^\infty_r(x^\ast)) \\
        &\geq \delta_r\, \mathbb{P} \left( \overline{X}_k + \Delta t\overline{V}_{k+1} \in B^\infty_r(x^\ast))\, \middle|\, \overline{V}_{k+1} \in B_r^\infty\left(\lambda(x^\ast-\overline{X}_k \right)\right)\,.
\end{align*}
Let $\Theta$ be the random variable such that $V_{k+1} = \lambda(\Theta - \overline{X}_k)$. Then, we have
\begin{align*}
\mathbb{P} \Big( \overline{X}_k + \Delta t\overline{V}_{k+1} \in B^\infty_r(x^\ast))\, \Big |&\, \overline{V}_{k+1} \in B_r^\infty\left(\lambda(x^\ast-\overline{X}_k \right)\Big)   \\
&  = \mathbb{P} \left( \overline{X}_k + \Delta t\lambda (\Theta - \overline{X}_k)) \in B^\infty_r(x^\ast))\, \middle|\, \Theta \in B_r^\infty\left(x^\ast\right) \right) \\
&  = \mathbb{P} \left( (1 - \dt \lambda) \overline{X}_k + \Delta t\lambda \Theta  \in B^\infty_r(x^\ast))\, \middle|\, \Theta \in B_r^\infty\left(x^\ast\right) \right) \\
& \geq \mathbb{P} \left(\overline{X}_k \in B^\infty_r(x^\ast)\right)
\end{align*}
where in the last inequality we used the convexity of $B^\infty_r(x^*)$ and $\lambda\Delta t \in [0,1]$. By the previous estimate, we obtain
\[
\mathbb{P}(\overline{X}_{k+1} \in B^\infty_r(x^\ast)) \geq \delta_r \,\mathbb{P}(\overline{X}_{k} \in B^\infty_r(x^\ast))\geq \delta_r^k\, \mathbb{P}(\overline{X}_{0} \in B^\infty_r(x^\ast))
\]
which is the desired lower bound.
\end{proof}

Next, we track the evolution of the error functional $\mathcal{H}[f_k]$ introduced in \eqref{eq:energy}.

\begin{proposition}\label{prop:evolution-of-errors} 
Under the settings of Theorem \ref{thm:convergence}, assume the parameters $\lambda, \sigma, \nu,\gamma >0$ are chosen such that
\begin{equation*}
\gamma \in \left[ \nu \frac{4\sigma \sqrt{d}}{\lambda} + 2\lambda  \,,\, \nu - 2\lambda \right]\,. 
\end{equation*} 
Then, for all time steps $k$ it holds
    \begin{equation}
        \mathcal{H}[f_{k+1}] \leq (1-A\dt)\mathcal{H}[f_{k}]+B\dt( \tempconst+|X^\alpha[\rho_k]-x^\ast| ),
    \end{equation}
    where $A, B$ are positive constants which depend on $\lambda, \sigma, \nu, \gamma$.
\end{proposition}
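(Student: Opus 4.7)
The plan is to decompose $\mathcal{H}[f_{k+1}] = \gamma\, \mathbb{E}|\overline{X}_{k+1}-x^\ast| + \mathbb{E}|\overline{V}_{k+1} - \lambda(x^\ast - \overline{X}_{k+1})|$ into position and velocity contributions, bound each in terms of the corresponding quantities at step $k$, and then select the weight $\gamma$ so that both error modes contract at a common positive rate $A$.

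The two key pointwise estimates I would establish first are, for the position,
$$
|\overline{X}_{k+1} - x^\ast| \leq (1 - \lambda \Delta t)|\overline{X}_k - x^\ast| + \Delta t\, |\overline{V}_{k+1} - \lambda(x^\ast - \overline{X}_k)|,
$$
which follows from $x^\ast\in\mathcal{D}$, non-expansiveness of $\Pi_{\mathcal{D}}$, the algebraic split $\overline{V}_{k+1} = \lambda(x^\ast-\overline{X}_k) + [\overline{V}_{k+1}-\lambda(x^\ast-\overline{X}_k)]$, and $\lambda\Delta t \leq 1$; and, for the velocity,
$$
|\overline{V}_{k+1} - \lambda(x^\ast - \overline{X}_{k+1})| \leq |\overline{V}_{k+1} - \lambda(x^\ast - \overline{X}_k)| + \lambda\Delta t\,|\overline{V}_{k+1}|,
$$
obtained via the triangle inequality together with $|\overline{X}_{k+1} - \overline{X}_k| \leq \Delta t\,|\overline{V}_{k+1}|$ and $|\overline{V}_{k+1}| \leq |\overline{V}_{k+1}-\lambda(x^\ast-\overline{X}_k)| + \lambda|\overline{X}_k - x^\ast|$. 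Both bounds reduce the analysis to estimating the single quantity $\mathbb{E}|\overline{V}_{k+1}-\lambda(x^\ast-\overline{X}_k)|$ in expectation.

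Conditioning on $\overline{T}_k$, which is independent of $(\overline{X}_k,\overline{V}_k,\overline{\xi}_k)$: in the no-jump branch $\overline{V}_{k+1}=\overline{V}_k$, and the quantity is exactly $\mathbb{E}|\overline{V}_k - \lambda(x^\ast - \overline{X}_k)|$; in the jump branch $\overline{V}_{k+1}=\overline{W}_k$, and using the identity $\overline{W}_k - \lambda(x^\ast - \overline{X}_k) = \lambda(X^\alpha[\rho_k] - x^\ast) + \sigma(\sigma_0 + |X^\alpha[\rho_k] - \overline{X}_k|)\odot \overline{\xi}_k$, the componentwise bound $\mathbb{E}|\overline{\xi}_{k,\ell}|\leq 1$, independence of $\overline{\xi}_k$ from $\overline{X}_k$, and $\|a\|_1 \leq \sqrt{d}|a|$, I obtain
$$
\mathbb{E}|\overline{W}_k - \lambda(x^\ast - \overline{X}_k)| \leq (\lambda + \sigma\sqrt{d})|X^\alpha[\rho_k] - x^\ast| + \sigma\sigma_0 d + \sigma\sqrt{d}\,\mathbb{E}|\overline{X}_k - x^\ast|,
$$
after splitting $|X^\alpha - \overline{X}_k|$ via the triangle inequality. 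Since $1 - e^{-\nu\Delta t} \leq \nu\Delta t$ and $e^{-\nu\Delta t} = 1 - \nu\Delta t + O(\Delta t^2)$, this yields control of $\mathbb{E}|\overline{V}_{k+1}-\lambda(x^\ast-\overline{X}_k)|$ as a convex combination of the previous velocity error and the jump source with $O(\Delta t^2)$ remainders.

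Assembling everything and grouping the coefficients of $\mathbb{E}|\overline{X}_k - x^\ast|$ and $\mathbb{E}|\overline{V}_k - \lambda(x^\ast-\overline{X}_k)|$ separately produces an inequality of the schematic form
$$
\mathcal{H}[f_{k+1}] \leq \gamma\bigl(1 - (\lambda - \lambda^2/\gamma - \nu\sigma\sqrt{d}/\gamma)\Delta t + O(\Delta t^2)\bigr)\mathbb{E}|\overline{X}_k-x^\ast| + \bigl(1 - (\nu - \gamma - \lambda)\Delta t + O(\Delta t^2)\bigr)\mathbb{E}|\overline{V}_k - \lambda(x^\ast-\overline{X}_k)| + B\Delta t\bigl(\sigma_0 + |X^\alpha[\rho_k]-x^\ast|\bigr).
$$
The stated window $\gamma\in[2\lambda + 4\nu\sigma\sqrt{d}/\lambda,\, \nu - 2\lambda]$ ensures both first-order rates are bounded below by a common $A>0$ large enough to absorb the $O(\Delta t^2)$ corrections, with the factors $2$ and $4$ compared to the naive thresholds $\lambda + \nu\sigma\sqrt{d}/\lambda$ and $\nu-\lambda$ providing exactly the slack needed for those remainders. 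The main obstacle is this simultaneous balancing: position errors contract at rate $\lambda$ (drift-driven) while velocity errors contract at rate $\nu$ (jump-driven), and they are coupled through the $\sqrt{d}$-scaled noise amplitude, so the admissible interval for $\gamma$ is non-empty only when $\nu$ is sufficiently larger than $\lambda$ and $\nu\sigma\sqrt{d}/\lambda$, which is precisely the condition implicitly required by the statement.
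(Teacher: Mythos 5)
Your proposal is correct and follows essentially the same route as the paper's proof: the same decomposition of $\mathcal{H}$ into $\gamma\,\mathbb{E}|\overline{X}_k-x^\ast|$ and the shifted velocity error, the same intermediate quantity $\mathbb{E}|\overline{V}_{k+1}-\lambda(x^\ast-\overline{X}_k)|$ controlled by conditioning on $\overline{T}_k$, the same triangle-inequality bounds yielding the coefficients $1-(\lambda-\lambda^2/\gamma-\nu\sigma\sqrt{d}/\gamma)\dt$ and $1-(\nu-\gamma-\lambda)\dt$ up to $O(\dt^2)$, and the same choice of the $\gamma$-window to absorb the remainders. Your explicit use of the non-expansiveness of $\Pi_{\mathcal{D}}$ (with $x^\ast\in\mathcal{D}$) is a small point of added care over the paper's write-up, and the constant $\sigma\sigma_0 d$ versus $\sigma\sigma_0\sqrt{d}$ only affects $B$, so it is immaterial to the statement.
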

Before providing a proof, we note that there is a choice of $\lambda,\sigma,\nu$ for which the interval is well defined. For instance, if we take $\lambda > 4\sigma \sqrt{d}$, the set of admissible $\gamma$ is non-empty provided $\nu$ is sufficiently large with respect to $\lambda$. In terms of the algorithmic dynamics, this means that the particles should update their velocities sufficiently fast with respect to the position updates.

\begin{proof}
We first provide estimates for the step $k + \frac12$, corresponding to $f_{k+\frac12} = \textup{Law}(\overline{X}_k, \overline{V}_{k+1})$, and then for $f_{k+1}$. We introduce the following notation for simplicity:
\begin{equation*} 
\mathcal{X}_k = \BE[|\overline{X}_k - x^\ast|], \quad \mathcal{V}_k = \BE[|\overline{V}_k - \lambda(x^\ast - \overline{X}_k)|], \quad \textup{and}\quad \mathcal{V}_{k+\frac12} = \BE[|\overline{V}_{k+1} - \lambda(x^\ast - \overline{X}_k)|],
\end{equation*}
so that $\mathcal{H}[f_k] = \gamma\mathcal{X}_k + \mathcal{V}_k$. 
At step $k  + \frac12$, from the update rule \eqref{eq:nonlinear-update}, it holds
\begin{align*}
 \mathcal{V}_{k+\frac{1}{2}}&=\BE[|\overline{V}_{k+1}-\lambda(x^\ast-\overline{X}_k)|] \\
    &=(1- e^{-\nu \Delta t})\mathcal{V}_k+ e^{-\nu\dt} \BE \left[|\overline{W}_k -\lambda(x^\ast-\overline{X}_k)|\right]
\end{align*}
Let $\{e_\ell\}_{\ell=1}^d$ be the standard base for $\RR^d$. By definition of $\overline{W}_k$, and using the triangular inequality, the second term can be estimated as
\begin{align*}
\BE& \left[|\overline{W}_k -\lambda(x^\ast-\overline{X}_k)|\right]  \\
& =\BE \left[|\lambda (X^\alpha[\rho_k] -x^\ast)+\sigma \sum_{\ell=1}^d(\tempconst+|(X^\alpha[\rho_k]-x)_\ell|)\overline{\xi}_{k,\ell} e_\ell|\right] \\
& \leq \BE\left[|(\lambda + \sigma\odot \overline{\xi}_k) (X^\alpha[\rho_k] - x^*) |\right]
 + \sigma\sum_{\ell=1}^d (\sigma_0 + \BE|(\overline{X}_k - x^*)_\ell| ) \BE[|\overline{\xi}_{k,\ell}|] \\
 & \leq (\lambda + \sigma \sqrt{d})|X^\alpha[\rho_k] - x^*| + \sigma \sigma_0 \sqrt{d} + \sigma\sqrt{d}\mathbb{E}[|\overline{X}_k - x^* |]
\end{align*}
where we used that $\mathbb{E}[|\overline{\xi}_{k,\ell}|]  =1 $, and the norms inequalities \eqref{eq:norms}. 
To obtain estimates in terms of $\nu\dt$, we use:
$
1 - e^{\nu\dt} \leq \nu \dt 
$
and take $\dt>0$ sufficiently small such that $e^{-\nu\dt}\leq (1 - \nu\dt/2)$. Altogether, we obtain
\begin{equation}\label{eq:v+1/2}
    \mathcal{V}_{k+\frac{1}{2}} \leq (1-\nu\dt/2)\mathcal{V}_k+ \nu\dt \left( \sigma \sqrt{d}\mathcal{X}_k+\Xi_k\right)\,,
\end{equation}
where we write $$\Xi_k = (\lambda+\sigma\sqrt{d})|X^\alpha[\rho_k]-x^\ast|+\sigma \tempconst \sqrt{d}\,,$$ which, we note, we will be able to bound using the quantitative Laplace principle (Proposition \ref{prop:quant-laplace}). 
For step $k+1$, we derive the evolution in terms of $\mathcal{X}_k, \mathcal{V}_{k +\frac12}$ as follows:
\begin{align*}
    \mathcal{X}_{k+1}&=\BE[|\overline{X}_{k+1}-x^\ast|]=\BE[|\overline{X}_k+ \dt\overline{V}_{k+1}-x^\ast|] \\
    &=\BE [|\overline{X}_k-x^\ast+\dt(\overline{V}_{k+1}-\lambda(x^\ast-\overline{X}_k))+\lambda \dt(x^\ast-\overline{X}_k)|] \\
    &\leq (1-\lambda \dt)\BE[|\overline{X}_k-x^\ast|] + \dt \BE[|\overline{V}_{k+1}-\lambda(x^\ast-\overline{X}_k)|] \\
    &= (1-\lambda \dt)\mathcal{X}_k+\dt\mathcal{V}_{k+\frac{1}{2}}
\end{align*}
where we simply used triangular inequality. For $\mathcal{V}_{k+1}$ it holds
\begin{align*}
    \mathcal{V}_{k+1} &= \BE[|\overline{V}_{k+1}-\lambda(x^\ast-\overline{X}_{k+1})|] = \BE [|\overline{V}_{k+1}-\lambda(x^\ast-\overline{X}_{k}- \overline{V}_{k+1}\dt)|] \\
    &\leq \BE[|\overline{V}_{k+1}-\lambda(x^\ast-\overline{X}_{k})|]+\BE [|\lambda \dt (\overline{V}_{k+1}-\lambda(x^\ast-\overline{X}_{k}))+\lambda^2\dt(x^\ast-\overline{X}_{k})|] \\
    &\leq (1+\lambda\dt)\BE[|\overline{V}_{k+1}-\lambda(x^\ast-\overline{X}_{k})|]+\lambda^2\dt \BE[|\overline{X}_{k}-x^\ast|] \\
    &= (1+\lambda \dt)\mathcal{V}_{k+\frac{1}{2}}+\lambda^2\dt\mathcal{X}_k\,.
\end{align*}
Collecting all the results with plugging in the inequality \eqref{eq:v+1/2}, we get 
\begin{align*}
    \gamma \mathcal{X}_{k+1}+\mathcal{V}_{k+1} & \leq \gamma(1-\lambda \dt)\mathcal{X}_k+\gamma\dt\mathcal{V}_{k+\frac{1}{2}}+(1+\lambda\dt)\mathcal{V}_{k+\frac{1}{2}}+\lambda^2 \dt \mathcal{X}_k \\
& \leq \gamma(1-\lambda \dt + \lambda^2 \dt )\mathcal{X}_k + (1+\lambda\dt + \gamma\dt)\mathcal{V}_{k+\frac{1}{2}}\\
& \leq \gamma(1-\lambda \dt + \lambda^2 \dt)\mathcal{X}_k + (1+\lambda\dt + \gamma\dt)\left((1-\nu\dt/2)\mathcal{V}_k+ \nu\dt \left( \sigma \sqrt{d}\mathcal{X}_k+\Xi_k\right)\right) \\
& =: C_1 \gamma \mathcal{X}_k  + C_2 \mathcal{V}_k + C_3 \Xi_k
\end{align*}
with 
\begin{align*}
C_1 &= \frac1\gamma\left(\gamma(1-\lambda \dt)+\lambda^2 \dt+\nu \sigma \sqrt{d}\dt (1+(\gamma+\lambda)\dt)\right) \\
C_2 &=  (1-\nu \dt/2)(1+(\gamma+\lambda)\dt)   \\
C_3 &= \nu \dt(1+(\gamma+\lambda)\dt)\,. 
\end{align*}
Taking $\dt$ such that $\dt(\gamma + \lambda)\leq 1$, we have
\begin{align*}
C_1 & \leq (1 - \lambda \dt) + \frac{\lambda^2}\gamma\dt + \frac{\nu \sigma \sqrt{d}}\gamma = \left( 1 - \left(\lambda - \lambda^2/\gamma - \nu\sigma \sqrt{d}/\gamma \right)\dt \right ) \\
&=: (1 - A_1\dt)\\
C_2 &  = 1 - \nu \dt/2 + (\gamma + \lambda)\dt -\nu(\gamma+\lambda)\dt^2/2 \leq \left(1 - (\nu/2 -\gamma - \lambda)\dt \right) \\
&=: (1 - A_2\dt)\\
C_3 &= \nu \dt(1+(\gamma+\lambda)\dt) \leq 2\nu\dt\,.
\end{align*}
Now, to obtain a decay rate for $\mathcal{H}[f_k]$, we need to select the parameters $\lambda, \sigma, \nu, \gamma>0$ such that $A_1, A_2>0$. In terms of $\gamma$, this translated into the condition
\begin{equation*}
\gamma \in \left[ \nu \frac{4\sigma \sqrt{d}}{\lambda} + 2\lambda  \,,\, \nu - 2\lambda \right]\,. 
\end{equation*}
 Once the set of parameters $\lambda, \sigma, \nu, \gamma$ is picked, the desired estimate is obtained by setting $A := \min\{A_1, A_2\}$, $B = 2\nu(\lambda + \sigma \sqrt{d})$.
\end{proof}

We are finally ready to prove the main convergence theorem. 

\begin{proof}[Proof of Theorem \ref{thm:convergence}]
We will prove the statement by contradiction. Let us assume that $\mathcal{H}[f_k]>\e$  for all $k\dt \leq T^\star$ and recall that the error evolution from Proposition~\ref{prop:evolution-of-errors} was given as 
\begin{equation*}
    \mathcal{H}[f_k] \leq (1-A\dt)\mathcal{H}[f_{k-1}]+B\dt( \tempconst+|X^\alpha[\rho_{k-1}]-x^\ast| ).
\end{equation*}
for a suitable choice of $\lambda,\nu,\sigma,\dt$. We will also assume $\sigma_0 \leq \e A/(2B)$.

In order to bound the term $|X^\alpha[\rho_{k-1}]-x^\ast| $,  we apply the quantitative Laplace principle (Proposition~\ref{prop:quant-laplace}) for all $k\dt \in [0, T^\star]$ with $q_\e, r_\e$ given by 
\begin{equation*}
    q_\e:=\frac{1}{2}\min \left\{ \left(\frac{\e A}{4c_p B\sqrt{d}} \right)^p, \EE_\infty \right\},\quad  r_\e:=\max_{s \in [0,R_p]} \left\{ \EE_s=\sup_{x \in B_s(x^\ast)} \EE(x) \leq q_\e\right\}.
\end{equation*}
Then, thanks to A2) from Assumption~\ref{assume:convergence} and the choice of $q_\e$, we get 
\begin{align*}
    \vert X^\alpha[\rho_{k-1}]-x^\ast \vert &\leq c_p\sqrt{d}(q_\e+\EE_{r_\e})^{1 \over p} + \frac{\sqrt{d}e^{-\alpha q_\e}}{\rho_{k-1}(B_{r_\e}(x^\ast))} \int |x^\ast-x|d \rho_{k-1}(x) \\
    &\leq \frac{\e A}{4B}+\frac{\sqrt{d}e^{-\alpha q_\e}}{\gamma\delta^k_{r_\e}} \mathcal{H}[f_{k-1}].
\end{align*}
Note that we used the estimate around the minimizer from Proposition~\ref{p:mass} with $r = r_\e$,  and $\int |x^\ast-x|d \rho_{k-1}(x) \leq \mathcal{H}[f_{k-1}]/\gamma$ for the second term. Now, we choose $\alpha=\alpha_\e$ sufficiently large so that it satisfies 
\begin{equation*}
    \frac{\sqrt{d}e^{-\alpha_\e q_\e}}{\gamma\,\delta^{T^\star/\dt}_{r_\e}} \leq \frac{A}{4B},
\end{equation*}
which then gives us 
\begin{equation*}
     \vert X^\alpha[\rho_{k-1}]-x^\ast \vert \leq  \frac{A}{2B}\mathcal{H}[f_{k-1}].
\end{equation*}
We note that there was no assumption on the parameter $\alpha$ in the previous auxiliary results.

Plugging in this results to the error evolution, it yields
\begin{align*}
    \mathcal{H}[f_k] &\leq (1-A\dt)\mathcal{H}[f_{k-1}]+B\dt \left(\tempconst+\frac{A} {2B}\mathcal{H}[f_{k-1}] \right) \\
    &\leq  \left(1- A\dt /2 \right) \mathcal{H}[f_{k-1}] + \tempconst B\dt.
\end{align*}
By iterating the above estimation, we get 
\begin{align*}
    \mathcal{H}[f_k] &\leq (1-A\dt/2)^k\mathcal{H}[f_0]+\tempconst B \dt \sum_{h=0}^{k-1} (1-A\dt/2)^h \\
    &\leq e^{-kA\dt/2} \mathcal{H}[f_0]+ \tempconst B \dt \frac{1-(1-A\dt/2)^k}{A \dt/2} \\
    &\leq e^{-kA\dt/2} \mathcal{H}[f_0]+ \frac{2\tempconst B}{A}(1-(1-A\dt/2)^k) \\
    &\leq  e^{-kA\dt/2} \mathcal{H}[f_0] + \frac{\e}{2},
\end{align*}
where we used our choice of $\tempconst$ in the last equality. 
Finally, thanks to the choice of $T^\star$,  for $k$ sufficiently large it holds
\begin{equation*}
    \mathcal{H}[f_k] \leq  \frac{\e}{2}+\frac{\e}{2}=\e,
\end{equation*}
which contradicts our initial assumption $\mathcal{H}[f_k] > \e$ for all $k\dt \leq T^\star$. 

\end{proof}

\section{Numerical experiments}
\label{sec:numerics}

In this section, we present several numerical experiments to evaluate the performance of the proposed algorithm~\eqref{eq:psojump}. First, we assess its effectiveness on benchmark functions, focusing on identifying suitable parameter ranges for the diffusion parameter $\sigma$ as well as comparison on two aforementioned noise types, Gaussian and Cauchy. Next, we compare \rev{the proposed algorithm with PSO and, in particular,} the original CBO method by analysing performance as the diffusion scaling parameter $\e$ approaches zero.

In the implementation, the particle positions are rescaled to lie within the domain $[-1,1]^d$, and initial positions are sampled uniformly from this domain. At each iteration, if particles move outside the domain, boundary conditions are applied to enforce confinement within $[-1,1]^d$. We implemented a stopping criterion based on methodologies from \cite{benfenatiBinaryInteractionMethods2022, borghiConsensusBasedOptimization2023}. Specifically, we maintain a counter $n$ that tracks consecutive occurrences where $|X^\alpha_{\text{prev}}-X^\alpha_{\text{current}}|$ falls below a given tolerance threshold $\delta_{\text{stall}}>0$. When this condition persists for more than $n_{\text{stall}}$ consecutive iterations, we assume that the particle system has identified a solution and stop the computation. In the following experiments, these parameters are set to $\delta_{\text{stall}}=10^{-4}, n_{\text{stall}}=500$.

\subsection{Tests on benchmark problems}

We implemented the proposed \rev{PSO with jumps} algorithm \eqref{eq:psojump} and tested it on benchmark functions for optimization. The functions we used can be found in \cite{jamilLiteratureSurveyBenchmark2013} and are summarised in Table~\ref{table:test_ftns}. For all test functions, the location of the minimizer is known, and we set the search space dimension to $d = 20$. All of these functions are continuous but not necessarily differentiable or convex.

To assess the performance of our algorithm, we employ metrics such as success rate and the mean values of fitness values $\EE(X^\alpha_T)$ and  $\ell^\infty$ errors, i.e. $\lVert X^\alpha_T -x^\ast \rVert_\infty$, where $X^\alpha_T$ denotes the consensus point at the final iteration step. We note that $\EE(X^\alpha_T)$ also corresponds to the optimality gap since $\EE(x^*) = 0$ for all test functions.
Following \cite{pinnauConsensusbasedModelGlobal2017}, we consider an optimization successful if $X^\alpha_T$ lies within a $\ell^\infty$-ball with radius $0.25$ centered at the minimizer $x^\ast$. All results presented in the following sections were obtained from $100$ independent realisations of the algorithm.

\renewcommand{\arraystretch}{1.5}
\begin{table}
    \centering
    \small
    \begin{tabular}{|c|c|c|c|}
        \hline
        Name &  Function $\mathcal{F}(x)$  & Range & $x^*$ \\ \cline{1-4}
        
        Ackley & $-20\text{exp} \left( -\frac{0.2}{\sqrt{d}} \|x\| \right) - \text{exp} \left( -\frac{1}{d} \sum_{i=1}^d \cos{2\pi x_i}\right)  +20 +e $ & $[-5, 5]^d$ & $(0, \ldots, 0)$  \\  \cline{1-4}

        Rastrigin & $ \frac{1}{d} \sum_{i=1}^d [ x_i^2 -10\cos(2 \pi x_i) +10] $ & $[-5.12, 5.12]^d$ & $(0, \ldots, 0)$ \\  \cline{1-4}

        Griewank & $ 1+ \sum_{i=d}^d \frac{(x_i)^2}{4000} - \prod_{i=1}^d \cos{\frac{x_i}{i}}$ & $[-600, 600]^d$ & $(0, \ldots, 0)$ \\  \cline{1-4}

        Rosenbrock & $ \sum_{i=1}^{d-1} [100(x_{i+1}-x_i^2)^2 + (1-x_i)^2] $ & $[-100, 100]^d$ & $(1, \ldots, 1)$  \\  \cline{1-4}

        Salomon & $ 1-\cos \left( 2\pi \sqrt{\sum_{i=1}^d (x_i)^2}\right) +0.1\sqrt{\sum_{i=1}^d (x_i)^2} $ & $[-100, 100]^d$ & $(0, \ldots, 0)$ \\  \cline{1-4}

        Schwefel 2.20 & $ \sum_{i=1}^d |x_i| $ & $[-100, 100]^d$ & $(0, \ldots, 0)$ \\ \cline{1-4}

        XSY random & $ \sum_{i=1}^d \eta_i|x_i|^i $, $\eta_i \sim \mathcal{U}(0,1)$ & $[-5, 5]^d$ & $(0, \ldots, 0)$ \\ \cline{1-4}
    \end{tabular}
    \caption{Test functions for global non-convex optimization}
    \label{table:test_ftns}
\end{table}

As in PSO and CBO algorithms, the strength of the random component, in this case the diffusion coefficient $\sigma$, is regarded as the most influential parameter among all tunable parameters, as it governs the exploration of the search space. Consequently,  we investigate the optimal range of $\sigma$ for algorithm performance. We expect small values of $\sigma$ to lead to premature convergence, while large values may prevent the emergence of consensus at all.
As suggested by the theoretical analysis (see Theorem \ref{thm:convergence}), we also expect large values of the weight $\alpha$ to yield better results. Therefore, we fix $\alpha=10^5$ throughout our experiments. Setting $\alpha$ to such a large value may lead to numerical instability in the evaluation of the weight function $\omega_\alpha(x)=e^{-\alpha \EE(x)}$ due to potential overflow or underflow in the exponential calculation. To prevent this, we used the numerical trick introduced in \cite{fornasierConsensusbasedOptimizationSphere2021}. 

\begin{figure}
    \centering
    \begin{subfigure}{0.48\textwidth}
        \centering
        \includegraphics[width=\textwidth]{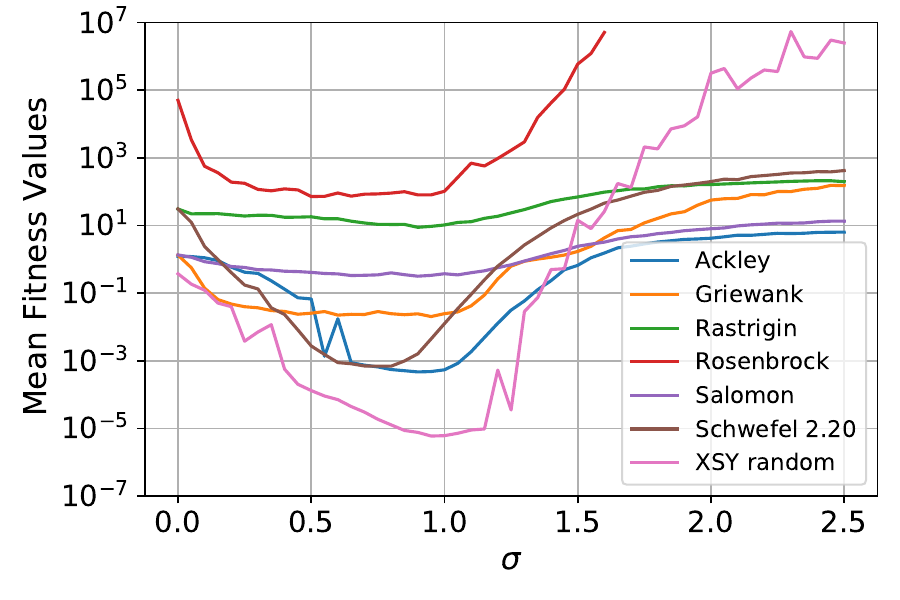}
        \caption{$\BE[\EE(X^T_\alpha)]-\inf_x\EE(x)$, Gaussian noise}
        \label{fig:gauss}
    \end{subfigure}
    \hfill    
    \begin{subfigure}{0.48\textwidth}
        \centering
        \includegraphics[width=\textwidth]{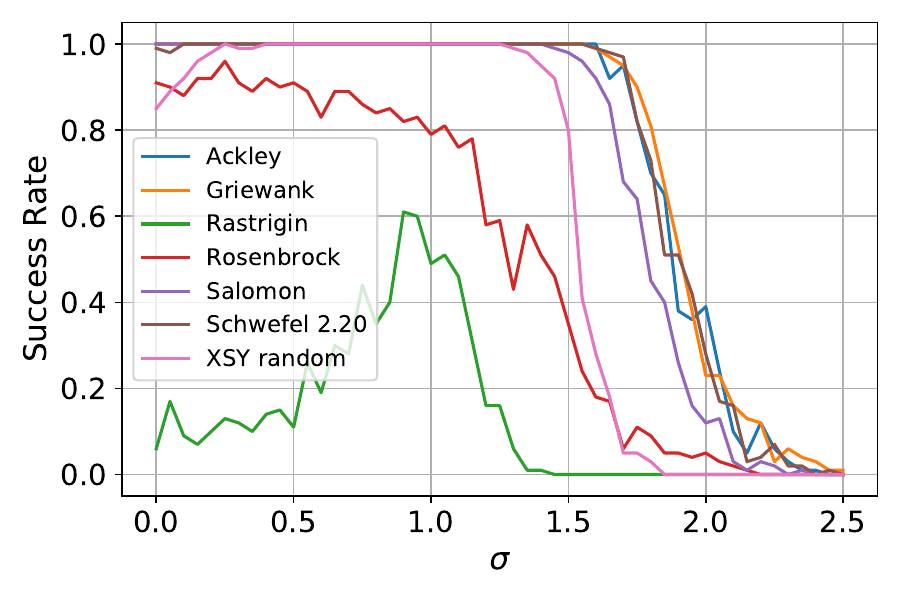}
        \caption{Success rate, Gaussian noise}
        \label{fig:gauss_success}
    \end{subfigure}
    
    \begin{subfigure}{0.48\textwidth}
        \centering
        \includegraphics[width=\textwidth]{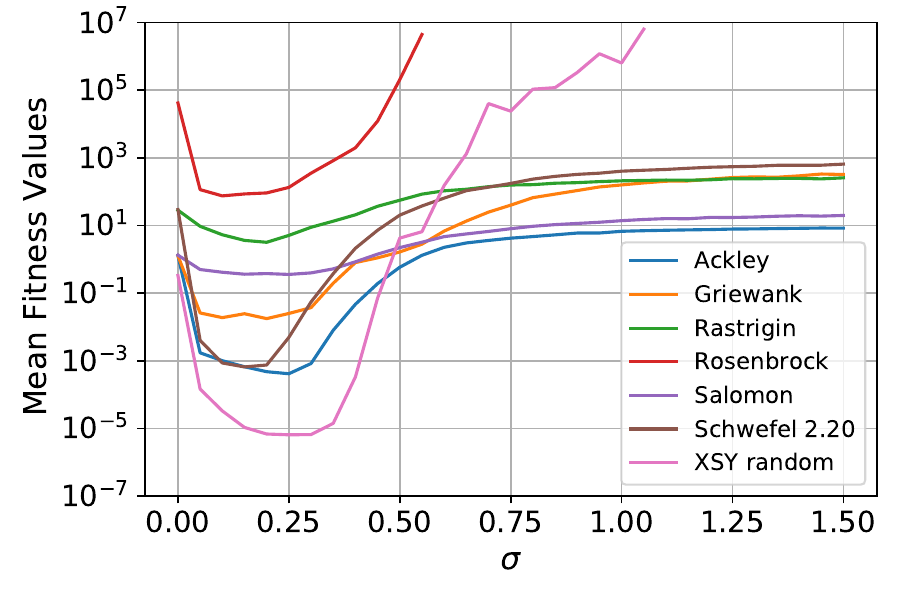}
        \caption{$\BE[\EE(X^T_\alpha)]-\inf_x \EE(x)$, Cauchy noise}
        \label{fig:cauchy}
    \end{subfigure}  
    \hfill
        \begin{subfigure}{0.48\textwidth}
        \centering
        \includegraphics[width=\textwidth]{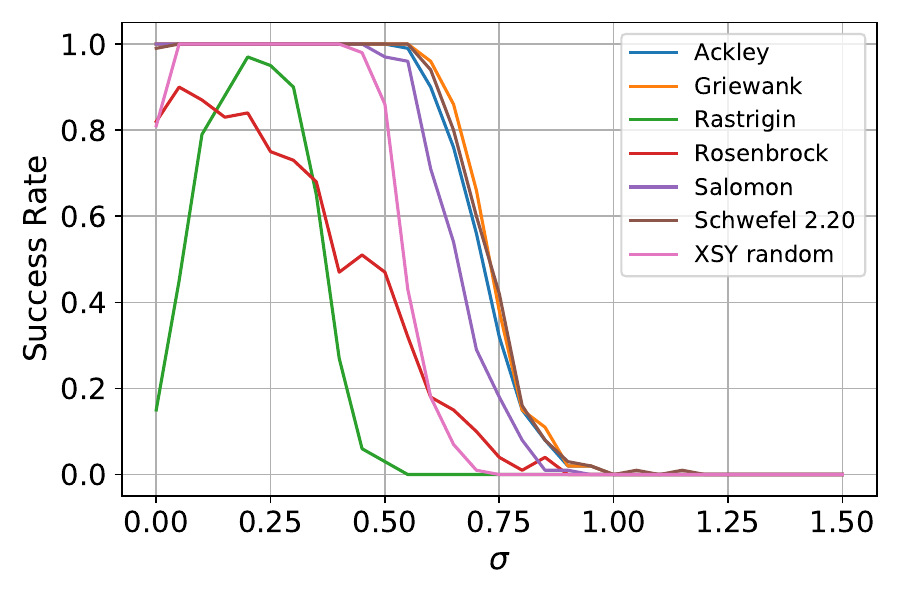}
        \caption{Success rate, Cauchy noise}
        \label{fig:cauchy_success}
    \end{subfigure} 
    \caption{Variations in diffusion coefficient $\sigma$ using Gaussian (top) and Cauchy (bottom) noise.  Parameters are set to $N=200, \dt=0.1, \lambda=1, \nu=1, \alpha=10^5$, with total iteration step $k_T=10^3$. For visualisation purposes, values exceeding $10^7$ were excluded. We note that, although the mean $l^\infty$ error is small, the fitness values can still become large due to the definition of the Rosenbrock and XSY random function.}
    \label{fig:sigma}
\end{figure}

The top plots in Figure~\ref{fig:sigma} illustrate the performance of the algorithm across a range of diffusion coefficients, from $\sigma = 0$ to $\sigma = 2.5$, using Gaussian noise. While the optimal choice of $\sigma$ varies across test functions, a common range of $\sigma \in [0.75, 1]$ tends to yield consistently good results for most functions, with the exceptions of the Rastrigin and Rosenbrock functions.

The bottom plots present analogous results using standard Cauchy noise for $\sigma \in [0, 1.5]$. Due to the heavy-tailed nature of the Cauchy distribution, better performance is observed for smaller values of $\sigma$, as reflected in Figure~\ref{fig:sigma}. In particular, the Cauchy noise proves highly effective for the Rastrigin function, achieving a success rate close to $100\%$ with $\sigma = 0.2$. Based on these findings, we identify $\sigma = 0.75$ as a generally effective choice for Gaussian noise, and $\sigma = 0.25$ for Cauchy noise.
 
\begin{table}[t]
\centering
\resizebox{\textwidth}{!}{
\begin{tabular}{c|c|cc|cc|cc|cc|cc}
\toprule
\multirow{2}{*}{Name} & \multirow{2}{*}{} 
& \multicolumn{2}{c|}{$N=50$} 
& \multicolumn{2}{c|}{$N=100$} 
& \multicolumn{2}{c|}{$N=200$} 
& \multicolumn{2}{c|}{$N=500$} 
& \multicolumn{2}{c}{$N=1000$} \\
\cmidrule(lr){3-4} \cmidrule(lr){5-6} \cmidrule(lr){7-8} \cmidrule(lr){9-10} \cmidrule(lr){11-12}
& & Gaussian & Cauchy & Gaussian & Cauchy & Gaussian & Cauchy & Gaussian & Cauchy & Gaussian & Cauchy \\
\midrule
\multirow{3}{*}{Ackley} 
& Success Rate & 97\% & 100\% & 100\% & 100\% & 100\% & 100\% & 100\% & 100\% & 100\% & 100\% \\
& $\BE[\lVert X^\alpha_T-x^\ast \rVert_\infty]$        
& 6.33E-02 & 2.69E-03 
& 5.43E-03 & 1.78E-04
& 6.16E-05 & 3.97E-05 
& 4.89E-05 & 2.10E-05 
& 4.42E-05 & 1.95E-05 \\
& $\BE[\EE(X^\alpha_T)]$ 
& 1.56E-01 & 2.38E-02 
& 9.83E-03 & 1.56E-03 
& 1.30E-04 & 4.14E-04 
& 1.14E-04 & 2.56E-04 
& 1.09E-04 & 2.54E-04 \\
\midrule
\multirow{3}{*}{Rastrigin} 
& Success Rate & 5\% & 25\% & 23\% & 75\% & 54\% & 100\% & 85\% & 100\% & 99\% & 100\% \\
& $\BE[\lVert X^\alpha_T-x^\ast \rVert_\infty]$        & 4.87E-01 & 3.91E-01
& 3.75E-01 & 2.49E-01
& 2.86E-01 & 1.94E-01
& 2.20E-01 & 9.78E-02
& 1.89E-01 & 5.99E-03 \\
& $\BE[\EE(X^\alpha_T)]$ 
& 1.11E+02 & 1.79E+01
& 7.84E+01 & 1.01E+01
& 5.20E+01 & 5.05E+00
& 2.99E+01 & 8.10E-01
& 1.64E+01 & 3.34E-02 \\
\midrule
\multirow{3}{*}{Griewank} 
& Success Rate & 100\% & 100\% & 100\% & 100\% & 100\% & 100\% & 100\% & 100\% & 100\% & 100\% \\
& $\BE[\lVert X^\alpha_T-x^\ast \rVert_\infty]$        & 1.37E-02 & 1.25E-02 & 1.12E-02 & 1.20E-02 & 9.36E-03 & 1.07E-02 & 1.03E-02 & 9.34E-03 & 9.85E-03 & 9.25E-03 \\
& $\BE[\EE(X^\alpha_T)]$ & 4.46E-05 & 4.07E-05 & 3.36E-05 & 3.39E-05 & 2.68E-05 & 2.93E-05 & 3.04E-05 & 2.68E-05 & 2.65E-05 & 2.57E-05 \\
\midrule
\multirow{3}{*}{Rosenbrock} 
& Success Rate & 64\% & 53\%& 83\% & 65\%& 84\% & 75\%& 93\% & 83\%& 90\% & 84\%\\
& $\BE[\lVert X^\alpha_T-x^\ast \rVert_\infty]$        & 2.12E-01 & 3.37E-01
& 1.09E-01 & 2.26E-01
& 8.81E-02 & 1.55E-01
& 6.08E-02 & 1.28E-01
& 6.82E-02 & 1.26E-01
\\
& $\BE[\EE(X^\alpha_T)]$ & 1.02E+03 & 1.83E+03
& 1.61E+02 &6.05E+02
 & 8.68E+01 & 1.33E+02
& 3.65E+01 &5.52E+01
 & 2.85E+01 & 4.21E+01
\\
\midrule
\multirow{3}{*}{Salomon} 
& Success Rate & 100\% & 100\% & 100\% & 99\% & 100\% & 100\% & 100\% & 100\% & 100\% & 100\% \\
& $\BE[\lVert X^\alpha_T-x^\ast \rVert_\infty]$        & 3.32E-02 & 3.82E-02 & 1.91E-02 & 2.11E-02 & 1.45E-02 & 1.46E-02 & 1.02E-02 & 9.98E-03 & 8.96E-03 & 7.95E-03 \\
& $\BE[\EE(X^\alpha_T)]$ & 6.48E-01 & 7.96E-01 & 4.39E-01 & 4.56E-01 & 3.56E-01 & 3.39E-01 & 3.87E-01 & 3.39E-01 & 2.71E-01 & 4.83E-01 \\
\midrule
\multirow{3}{*}{Schwefel 2.20} 
& Success Rate & 100\% & 100\% & 100\% & 99\% & 100\% & 100\% & 100\% & 100\% & 100\% & 100\% \\
& $\BE[\lVert X^\alpha_T-x^\ast \rVert_\infty]$        & 4.05E-03 & 1.99E-03 & 3.20E-05 & 1.59E-04 & 1.53E-06 & 1.01E-05 & 7.73E-07 & 5.87E-07 & 6.53E-07 & 3.54E-07 \\
& $\BE[\EE(X^\alpha_T)]$ & 5.44E-01 & 7.99E-01 & 7.29E-03 & 7.24E-02 & 6.78E-04 & 4.96E-03 & 4.81E-04 & 4.12E-04 & 4.48E-04 & 2.94E-04 \\
\midrule
\multirow{3}{*}{XSY random} 
& Success Rate & 98\% & 100\% & 100\% & 99\% & 100\% & 100\% & 100\% & 100\% & 100\% & 100\% \\
& $\BE[\lVert X^\alpha_T-x^\ast \rVert_\infty]$        & 1.09E-01 & 6.54E-02 & 9.06E-02 & 4.34E-02 & 8.81E-02 & 3.48E-02 & 8.20E-02 & 3.08E-02 & 7.82E-02 & 3.03E-02 \\
& $\BE[\EE(X^\alpha_T)]$ & 4.95E-02 & 5.11E-05 & 3.39E-05 & 8.30E-06 & 2.00E-05 & 6.47E-06 & 1.31E-05 & 3.89E-06 & 7.83E-06 & 2.96E-06 \\

\bottomrule
\end{tabular}}

\caption{Variations in particle number $N$. Parameters are set to $\sigma=0.75 \text{ (Gaussian)}, \sigma=0.25 \text{ (Cauchy)}, \dt=0.1, \lambda=1, \nu=1, \alpha=10^5$, with total iteration step $k_T=10^3$.}
\label{table:test}
\end{table}

Table~\ref{table:test} summarises the performance of our algorithm on all of the functions in Table~\ref{table:test_ftns} for varying particle numbers ($N = 50, 100, 200, 500, 1000$), using both Gaussian and Cauchy noise. Based on the results presented in Figure~\ref{fig:sigma}, we selected $\sigma = 0.75$ for Gaussian noise and $\sigma = 0.25$ for Cauchy noise. The algorithm performs well across most of the functions, showing a clear trend of decreasing error as the number of particles increases. For the Rastrigin and Rosenbrock functions, however, the chosen values of $\sigma$ may not be optimal, which could partly explain their comparatively lower performance. In particular, the Rastrigin function proved more challenging to optimise. This difficulty is well-documented in the CBO literature~\cite{benfenatiBinaryInteractionMethods2022, borghiConsensusBasedOptimization2023, grassiParticleSwarmOptimization2021}, and can be attributed to the function’s structural characteristics; most notably, the presence of a global minimum surrounded by a dense array of local minima.

\begin{table}[t]
\centering
\resizebox{\textwidth}{!}{
\begin{tabular}{c|c|cc|cc|cc|cc|cc}
\toprule
\multirow{2}{*}{Name} & \multirow{2}{*}{} 
& \multicolumn{2}{c|}{$N=50$} 
& \multicolumn{2}{c|}{$N=100$} 
& \multicolumn{2}{c|}{$N=200$} 
& \multicolumn{2}{c|}{$N=500$} 
& \multicolumn{2}{c}{$N=1000$} \\
\cmidrule(lr){3-4} \cmidrule(lr){5-6} \cmidrule(lr){7-8} \cmidrule(lr){9-10} \cmidrule(lr){11-12}
& & BGK & PSO & BGK & PSO & BGK & PSO & BGK & PSO & BGK & PSO \\
\midrule
\multirow{3}{*}{Ackley} 
& Success Rate & 100\% & 100\% & 100\% & 100\% & 100\% & 100\% & 100\% & 100\% & 100\% & 100\% \\
& $\BE[\lVert X^\alpha_T-x^\ast \rVert_\infty]$  
& 2.69E-03 & 9.18E-06
& 1.78E-04 & 5.94E-06
& 3.97E-05 & 3.85E-06 
& 2.10E-05 & 2.94E-06 
& 2.10E-05 & 2.52E-06 \\
& $\BE[\EE(X^\alpha_T)]$ 
& 2.38E-02 & 9.71E-05 
& 1.56E-03 & 7.20E-05 
& 4.14E-04 & 5.42E-05
& 2.56E-04 & 4.42E-05
& 2.54E-04 & 4.00E-05 \\
\midrule
\multirow{3}{*}{Rastrigin} 
& Success Rate & 25\% & 20\% & 75\% & 51\% & 100\% & 68\% & 100\% & 92\% & 100\% & 100\% \\
& $\BE[\lVert X^\alpha_T-x^\ast \rVert_\infty]$        
& 3.91E-01 & 3.74E-01
& 2.49E-01 & 2.92E-01
& 1.94E-01 & 2.57E-01
& 9.78E-02 & 2.10E-01
& 5.99E-03 & 1.89E-01 \\
& $\BE[\EE(X^\alpha_T)]$ 
& 1.79E+01 & 1.46E+01
& 1.01E+01 & 1.04E+01
& 5.05E+00 & 7.50E+00
& 8.10E-01 & 4.48E+00
& 3.34E-02 & 3.27E+00
\\

\bottomrule
\end{tabular}}



\caption{\rev{Comparison between the proposed jump PSO algorithm \eqref{eq:psojump} (with Cauchy noise) and PSO \eqref{eq:sdpso}. Particle number $N$ is varied for both cases. Parameters are set to $m=0.1$, $\gamma = 1-m$, $\lambda_1 = \lambda_2 = 1$, $\sigma_1 = \sigma_2 = 1.7$, $\Delta t = 0.1$, $\alpha = 10^5$, with a total iteration step $k_T = 10^3$.}}
\label{table:test-pso}
\end{table}

\rev{
Table~\ref{table:test-pso} reports a comparison with the PSO method for the Ackley and Rastrigin functions. We adopt the formulation of PSO with also particles personal bests $\{ Y^i_k \}_{i=1,\dots}$ and inertia weight $m = 1-\gamma$, as described in \cite{grassiParticleSwarmOptimization2021, huangGlobalConvergenceParticle2023}. The main difference with respect to the standard PSO \cite{kennedyParticleSwarmOptimization1995} is the de-coupling of stochastic and deterministic components and Gaussian noise, which allows for more flexibility in balancing the explorative and exploitative behaviours of the dynamics.
}
\rev{
The corresponding time-discrete update rule used in the comparison is given by
\begin{equation}
\begin{split}
    X^i_{k+1} &= X^i_k + \dt V^i_{k+1}, \\ 
    V^i_{k+1} &= \left( \frac{m}{m+\gamma \dt}\right) V^i_k + \frac{\lambda_1\dt}{m+\gamma \dt} (Y^i_k-X^i_k) + \frac{\lambda_2 \dt}{m+\gamma \dt} (\overline{Y}^\alpha_k - X^i_k) \\
    &\quad + \frac{\sigma_1 \sqrt{\dt}}{m+\gamma \dt}(Y^i_k-X^i_k) \odot \xi^{1,i}_k +  \frac{\sigma_2 \sqrt{\dt}}{m+\gamma \dt}(\overline{Y}^\alpha_k-X^i_k) \odot \xi^{2,i}_k, \\
    Y^i_{(k+1)\dt} &= 
    \begin{cases}
        Y^i_{k \dt}   & \textup{if}\;\; \EE(X^i_{(k+1)\dt}) \geq \EE(Y^i_{k\dt}) \\
        X^i_{(k+1) \dt} & \textup{if}\;\; \EE(X^i_{(k+1)\dt}) < \EE(Y^i_{k\dt}),
    \end{cases}
    \end{split}
    \label{eq:sdpso}
\end{equation}
where $\xi^{1,i}, \xi^{2,i} \sim \mathcal{N}(0,I_d)$. Here, $\overline{Y}^\alpha_k$ is a weighted average among personal bests, as in \eqref{eq:xalpha}, which for $\alpha\ll 1 $ almost coincides with the global best position up to time $k$.}

\rev{ To reduce the number of free parameters to the same number of the proposed one \eqref{eq:psojump}, we set $\lambda_1 = \lambda_2$ and $\sigma_1 = \sigma_2$, in line with \cite{grassiParticleSwarmOptimization2021}. The parameter choices reported in the table are justified by preliminary experiments: we tested this time-discrete formulation and identified the ranges of $\sigma_1, \sigma_2$ that yielded the best performance for both functions.
  Overall, both the proposed method and PSO achieve competitive results, but the use of Cauchy noise in our method leads to a slight improvement in performance on the Rastrigin function. We note that the possibility of accommodating Cauchy-type noise is a peculiarly of the BGK modelling framework proposed in this paper.
}

\rev{
\begin{remark}
The computational complexity in terms of number of operation per time-step is $\mathcal{O}(N)$, which is the minimal for the update of $N$ particles. The number of evaluation of objective function per iteration is also $N$, as in standard PSO and CBO algorithms. If the objective function is costly to evaluate, the 
weighted average point $X^\alpha[\rho_k^N]$ \eqref{eq:xalpha} can also be computed with 
only $M\ll N$ particles using random batch techniques \cite{carrilloConsensusbasedGlobalOptimization2020}. 
In terms of storage, the algorithm requires to store $N$ copies of $2d$-dimensional solutions ($2d$ and not simply $d$ due to the presence of velocities). Therefore, for very high-dimensional problems, only a small number of particles can be employed, possibly leading to poor performance. 
\end{remark}
}

\subsection{Comparison with CBO via scaling limit}
In Section~\ref{sec:2.scaling}, we showed that by taking the diffusive scaling limit $\e \to 0$ in the scaled BGK-type equation,
on can obtain the Fokker--Planck equation which characterised the mean-field description of the CBO particle dynamics. We investigate numerically in this section the relation between the two particle dynamics. 

We recall that, in the CBO method with anisotropic noise \cite{carrilloConsensusbasedGlobalOptimization2020}, the position $X^i_k$ of each particle $i=1, \dots, N$ for $k=0,1,\dots$ is updated according to:
\begin{equation}\label{eq:cbo}
X_{k+1}^i = X^i_k -\lambda \dt(X^i_k-X^\alpha_k[\rho_k^N])+\tilde{\sigma }\sqrt{\dt} (X^i_k-X^\alpha_k[\rho_k^N]) \odot \xi_k^i\, ,
\end{equation}
where we use a standard normal random variable $\xi_k^i \sim \mathcal{N}(0,I_d)$ as noise. 
We note that the discretization of the scaled update rule~\eqref{alg_BGK2} implemented here is not asymptotic preserving (AP), as it does not recover the CBO update \eqref{eq:cbo} in the limit $\e \to 0$. The development of an AP scheme remains an open problem. Therefore, at the algorithmic level, to obtain CBO behaviour as $\e \to 0$, we adopt the following scaling: for each $\tilde{\sigma}$ used in the CBO update rule, we set
\begin{equation}\label{eq:sigma-scale}
    \sigma = \frac{\tilde{\sigma}\e}{\sqrt{\dt}}
\end{equation} as the diffusion parameter in~\eqref{alg_BGK2}.

\begin{figure}
    \centering
    \includegraphics[width=\linewidth]{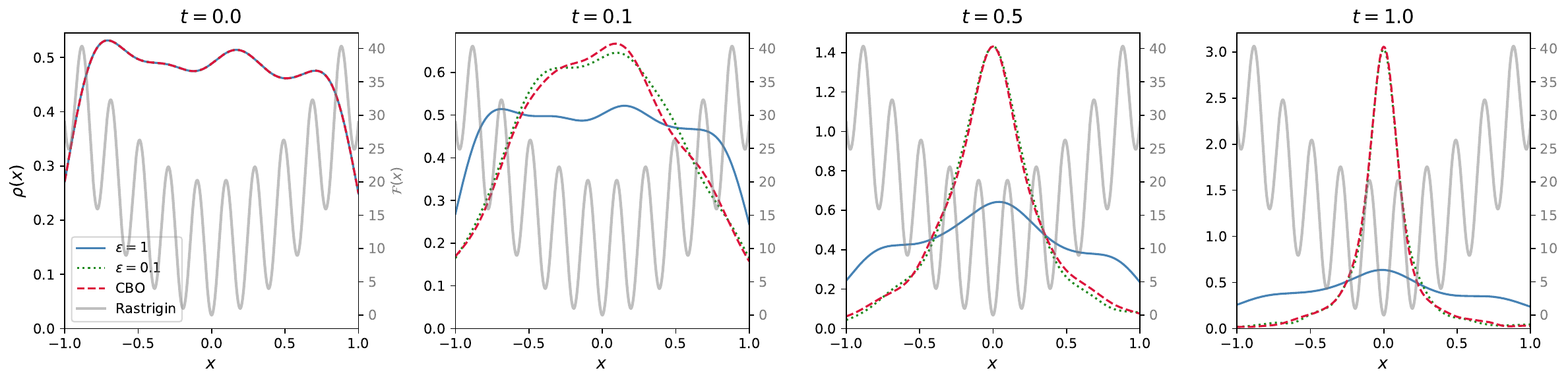}
    \caption{\rev{Density plot snapshots in the diffusion limit for the Rastrigin function in $d=1$, shown for rescaled jump PSO \eqref{alg_BGK2} with $\e = 1$ and $\e = 0.1$, alongside the CBO dynamics. The parameters are set to $N=1000$, $\tilde{\sigma}=1$ \text{ (for CBO)}, $\lambda=1$, $\dt = 0.1$, and $\alpha = 10^5$, with a total of $k_T= 10^3$ iteration steps. The diffusion constants for the scaled algorithm are chosen according to the scaling law \eqref{eq:sigma-scale}. Particles are visualised in the rescaled domain $[-1,1]$ and the density function is obtained via kernel density reconstruction. For reference, the objective function (Rastrigin) is plotted in the background with a separate axis on the right.  
    } 
    }
    \label{fig:density}
\end{figure}

\begin{figure}
    \centering
    \begin{subfigure}{0.48\textwidth}
        \centering
        \includegraphics[width=\textwidth]{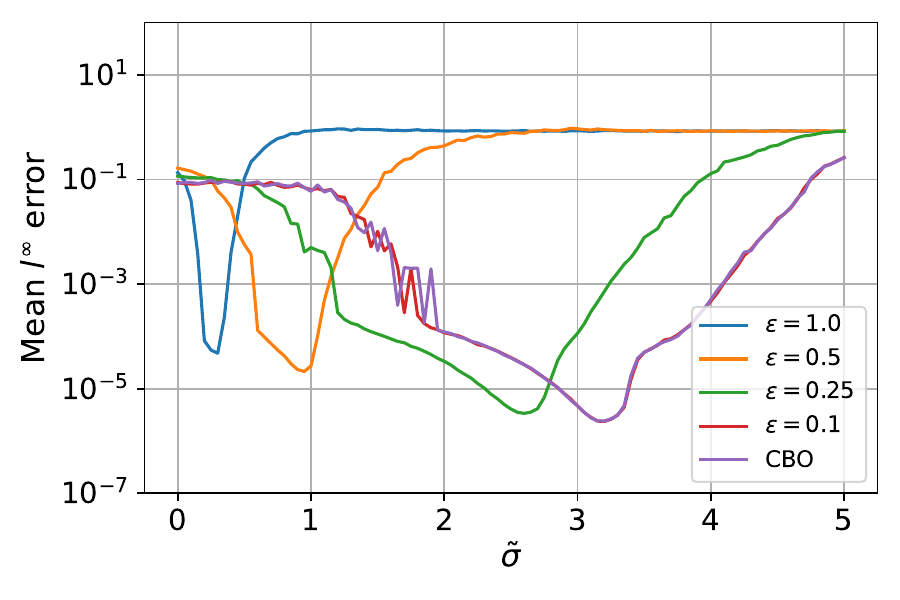}
        \caption{$\BE[\lVert X^\alpha_T-x^\ast\rVert_\infty]$, Ackley function}
        \label{fig:diff_ackley_mean}
    \end{subfigure}
    \hfill    
    \begin{subfigure}{0.48\textwidth}
        \centering
        \includegraphics[width=\textwidth]{ 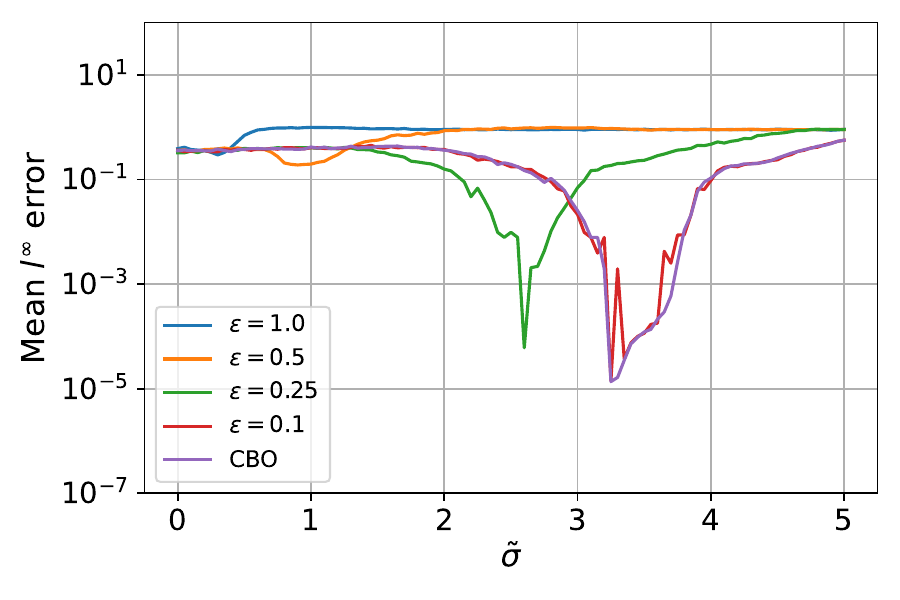}
        \caption{$\BE[\lVert X^\alpha_T-x^\ast\rVert_\infty]$, Rastrigin function}
        \label{fig:diff_rast_mean}
    \end{subfigure}
    \caption{Comparison with CBO via scaling limit. Tested on a range of $\tilde{\sigma}$ values from $0$ to $5$. For each $\e$ values, we plug in $\sigma=\tilde{\sigma}\e/\sqrt{\dt}$ in our algorithm. Ackley (left) and Rastrigin (right) functions are used in $d=20$, parameters are set to $N=200, \dt=0.1, \lambda=1, \nu=1, \alpha=10^5$, with total iteration step $k_T=10^3$.}
    \label{fig:diff} 
\end{figure}

\rev{We begin with an illustrative experiment. In Figure~\ref{fig:density} we can observe the evolution of the particle density in a $d=1$ experiment for different values of $\e$. We note that for $\e=0.1$ the scaled algorithm~\eqref{alg_BGK2} behaves as the CBO one, as expected, while larger values of $\e$ lead to a more spread distribution around the minimizer.
}

We conduct experiments on the Ackley and Rastrigin functions across a range of $\e$ values, $\e = 1, 0.5, 0.25, 0.1$, to \rev{illustrate} the convergence behaviour as $\e$ approaches zero \rev{in $d=20$}.  
\rev{Figure~\ref{fig:diff} shows the expected consistency in the limit, also in higher dimensions, between the two functions.}
As the scaling parameter $\e$ approaches zero, the error dynamics of \rev{\eqref{alg_BGK2}} progressively converge to those exhibited by the CBO algorithm. Notably, for both the Ackley and Rastrigin functions, we observe that from $\e = 0.1$, the error dynamics become indistinguishable from those of the CBO algorithm, confirming our theoretical expectation regarding the limiting behaviour of the algorithm.

\section{Concluding remarks}
\label{sec:conclusion}
In this work, we proposed and analysed a novel swarm-based optimization algorithm in which the velocity update is performed via stochastic jumps. This strategy, inspired by existing PSO extensions and mutation-based metaheuristics, aims to improve exploration and prevent premature convergence, while remaining amenable to mathematical modelling. Starting from the discrete particle system, we derived a kinetic description of BGK type that captures the collective behaviour of particles in the large-particles limit. Unlike standard PSO models relying on Gaussian noise, our framework allows for general noise distributions and provides a unified view that includes heavy-tailed perturbations, such as those generated by Cauchy variables. Under a suitable diffusive rescaling, we derive the connection between the proposed model and Consensus-Based Optimization (CBO), showing that the macroscopic density satisfies a nonlinear Fokker–Planck equation of CBO type. 

Within this framework, we proved quantitative propagation of chaos for the particle system in bounded domains, demonstrating that its empirical measure converges to the kinetic solution with explicit error bounds. Moreover, we established convergence towards global minimizers of the objective function in the  setting of non-degenerate diffusion and convex search domains. To our knowledge, this is the first convergence result of this type for second-order swarm-based optimization methods.
The theoretical results have been complemented by numerical experiments on classical benchmark problems, validating both the efficiency of the proposed algorithm and the predicted scaling behaviour. The simulations also confirmed the role of the diffusion parameter and jump frequency in determining convergence rates and robustness to noise. Furthermore, consistency with the CBO algorithm in the diffusive limit was numerically verified.

Several directions for future research remain open. From a theoretical viewpoint, an extension of the convergence analysis to unbounded domains, as well as to more general noise models, would further broaden the applicability of the framework. Another important line of investigation concerns the extension of the propagation of chaos and convergence results to the classical PSO dynamics, which include local memory terms and inertia weights. The proof techniques developed here may serve as a foundation for this more general analysis. Moreover, while we formally derived the connection between the BGK dynamics and CBO through a diffusive scaling, a rigorous mathematical justification of the limit remains an open problem, as well as the development of a consistent asymptotic preserving particle scheme. Proving convergence from the kinetic BGK model to the macroscopic CBO equation would provide a solid foundation for the observed asymptotic behaviour and clarify the role of noise structure in the limiting dynamics \cite{golse2014fluid}.

\section*{Acknowledgment}
The research has been supported by the Royal Society under the Wolfson Fellowship ``Uncertainty quantification, data-driven simulations and learning of multiscale complex systems governed by PDEs". 
The partial support by the European Union through the Future Artificial Intelligence Research (FAIR) Foundation, ``MATH4AI" Project and by ICSC -- Centro Nazionale di Ricerca in High Performance Computing, Big Data and Quantum Computing, funded by European Union -- NextGenerationEU and by the Italian Ministry of University and Research (MUR) through the PRIN 2022 project (No. 2022KKJP4X) ``Advanced numerical methods for time dependent parametric partial differential equations with applications" is also acknowledged. This work has been written within the activities of GNFM and GNCS groups of INdAM (Italian National Institute of High Mathematics).
\bibliographystyle{siam}
\bibliography{biblio}
\end{document}